\newcommand{\mcl}[1]{\mathcal{#1}}
\newcommand{\rn}[1]{\textcolor{red}{#1}}
\theoremstyle{plain}
\newtheorem{theorem}{Theorem}[section]
\newtheorem*{theorem*}{\rn{Theorem}}
\newtheorem{lemma}[theorem]{Lemma}
\newtheorem{corollary}[theorem]{Corollary}
\newtheorem{proposition}[theorem]{Proposition}
\newtheorem{conjecture}[theorem]{Conjecture}
\theoremstyle{definition}
\newtheorem{definition}[theorem]{Definition}
\theoremstyle{remark}
\title{Embedding edge-colored graphs in expanders with roll-back}
\author{Ben Lund \footnote{benlund@ibs.re.kr. Discrete Mathematics Group, Institute for Basic Science, Daejeon, South Korea. Supported by Institue for Basic Science grant IBS-R029-C1.}, Chuandong Xu \footnote{xuchuandong@xidian.edu.cn. School of Mathematics and Statistics, Xidian University, Xi'an, 710071, China.}}
\begin{document}

\setstretch{1.25}

\maketitle

\begin{abstract}
    We introduce a method to embed edge-colored graphs into families of expander graphs, which generalizes a framework developed by Dragani\'c, Krivelevich, and Nenadov (2022).
    As an application, we show that each family of sufficiently pseudo-random graphs on $n$ vertices contains every edge-colored subdivision of $K_\Delta$, provided that the distance between branch vertices in the subdivision is large enough, the average degree of each graph in the family is at least $(1+o(1))\Delta$, and the number of vertices in the subdivision is at most $(1-o(1))n$.

    This work is motivated in part by the problem of finding structures in distance graphs defined over finite vector spaces.
    For $d\ge 2$ and an odd prime power $q$, consider the vector space $\mathbb{F}_q^d$ over the finite field $\mathbb{F}_q$, where the distance between two points $(x_1,\ldots,x_d)$ and $(y_1,\ldots,y_d)$ is defined to be $\sum_{i=1}^d (x_i-y_i)^2$.
    A distance graph is a graph associated with a non-zero distance to each of its edges.
    We show that large subsets of vector spaces over finite fields contain every distance graph that is a nearly spanning subdivision of a complete graph, provided that the distance between branching vertices in the subdivision is large enough.

\end{abstract}

\section{Introduction}

Many problems in graph theory consider the question: Given a graph $H$ from some specified family of graphs, what sufficient conditions on a graph $G$ imply that $G$ contains $H$ as a subgraph?
In this paper, we consider the case that $H$ is an edge-colored graph, relatively large compared to $G$, and $G$ is a family of  expander graphs on a shared set of vertices.

The method we use can be traced back to work of Friedman and Pippenger \cite{FriedPipp87}, who proved that expanding graphs contain all small trees.
Friedman and Pippenger's argument has been extended by several groups of authors \cite{Haxell01,AlonKrivSud07,Balogh10} to embed almost spanning trees of bounded degree, assuming various expansion properties.

Friedman and Pippenger defined a notion of a ``good embedding'', and gave conditions under which a good embedding of a tree can be extended by adding a leaf.
This enabled them to use an inductive argument to show that reasonably large trees can be embedded into good expander graphs.
However, this original argument is not suitable for finding graphs that have cycles, since we can only add leaf vertices.
This limitation was overcome by modifying the notion of a good embedding to permit removal of leaves \cite{DragKrivNen22,Glebov13,GlebJohKriv}.
We call this modified version of Friedman and Pippenger's argument the {\em roll-back method}.
With the roll-back method, we can connect two specified vertices $x,y \in V(G)$ by a path that is not too short.
First, we attach a tree to each of $x,y$.
Since $G$ is a good expander, there must be an edge between the leaves of the tree rooted at $x$, and the leaves of the tree rooted at $y$.
Finally, we use the ability to remove leaves to remove the unused vertices of the trees that we embedded in the first step.

The main technical contribution of this paper is a colorful generalization of the roll-back method.
As an application, we show that families of pseudo-random graphs contain all edge-colored subdivisions of complete graphs (see Theorem \ref{th:joinedTopMin} for the precise statement).
This application generalizes a result of Dragani\'c, Krivelevich, and Nenadov \cite[Theorem 3]{DragKrivNen22}.
In fact, our result extends theirs to a wider range of parameters even in the monochromatic case.

The basic model that we work on was previously considered by Chakraborti and Lund \cite{ChakLund24}, who proved a colorful generalization of Haxell's theorem \cite{Haxell01}, and used it to embed large edge-colored trees in families of pseudorandom graphs.
One motivation for studying the problem of embedding edge-colored graphs into pseudorandom graphs is for an application to the general problem of embedding distance graphs into subsets of finite vector spaces.
For an odd prime power $q$ and $E \subseteq \mathbb{F}_q^d$, we define
\[\Delta(E)=\{||x-y||: x,y \in E\}, \ ||x||=x_1^2+\dots+x_d^2.\]
Iosevich and Rudnev \cite{IosRud07} showed that, if $|E| > 2 q^{(d+1)/2}$, then $|\Delta(E)| = \mathbb{F}_q$.
Subsequent works have considered embedding more complicated structures.

We say that a {\em distance graph} $\mathcal{H}$ is a simple graph $H$ together with a function $f:E(H) \rightarrow \mathbb{F}_q^*$ that associates each edge of $H$ with a distance.
A set $E\subseteq \mathbb{F}_q^d$ {\em contains} the distance graph $\mathcal{H}$ if there is an injective map $\phi:V(H) \rightarrow E$ such that $\|\phi(u)-\phi(v)\| = f(uv)$ for each edge $uv \in E(H)$.
Many authors have considered the general question: Given a class $\mcl{F}$ of distance graphs, how large can $E \subseteq \mathbb{F}_q^d$ be if it does not contain every distance graph in the class?
For example, the bound of Iosevich and Rudnev mentioned above gives an answer to this question for the class of distance graphs with a single edge as the underlying graph.
Other variants of this question have been considered for complete graphs \cite{vinh2012kaleidoscopic, parshall2017simplices}, bounded-degree graphs \cite{iosevich2019embedding}, cycles \cite{iosevich2021cycles, pham2022geometric}, paths \cite{bennett2016long}, rectangles \cite{lyall2022weak}, and trees \cite{pham2022geometric,soukup2019embeddings, ChakLund24}.
We show that every sufficiently large set $E \subseteq \mathbb{F}_q^d$ contains every nearly spanning subdivision of complete graphs, provided that the distance between branch vertices is sufficiently large.

\subsection{Notation and definitions for graphs}

Here is the model we consider.
Let $\mathcal{G}=\{G_1,G_2,\ldots,G_t\}$ be a family of graphs on the same set $V(G)$ of vertices, and let $\mathcal{H}$ be a graph with edges colored by $[t]$.
An {\em embedding} of a simple, $[t]$-edge-colored graph $\mathcal{H}$ into $\mathcal{G}$ is an injective map $\phi:V(\mathcal{H}) \hookrightarrow V(\mathcal{G})$ such that for each $i \in [t]$ and each $i$-colored edge $uv \in E(\mathcal{H})$, we have $\phi(u)\phi(v) \in E(G_i)$.
For $i \in [t]$ and $v \in V(\mcl{H})$, we denote the number of edges with color $i$ incident to $v$ by $d_i(v)$.
An important parameter for many of our theorems is the maximum monochromatic degree $\Delta^{mon}(\mathcal{H}) = \max_{i \in [t]} \max_{v \in V(\mcl{H})} d_i(v)$.

We consider two conditions on $\mathcal{G}$: $s$-joined, and $(p,\beta)$-jumbled.
Let $G$ be a graph.
For $X,Y \subset V(G)$, denote
\begin{equation}\label{eq:joined}e(X,Y) = \#\{(x,y) \in X \times Y: xy \in E(G) \}.\end{equation}
$G$ is {\em $s$-joined} if $e(X,Y)>0$ whenever $|X|,|Y| \geq s$.
Similarly, a bipartite graph with vertex partition $(A,B)$ is {\em $s$-bijoined} if eq. (\ref{eq:joined}) holds whenever $X\subset A$ and $Y \subset B$.
For real $p,\beta$ with $0 < p < 1 \leq \beta$, the graph $G$ is {\em $(p,\beta)$-jumbled} if
\begin{equation}\label{eq:jumbled}
|e(X,Y) - p|X|\,|Y|| \leq \beta |X|^{1/2}|Y|^{1/2} \end{equation}
for each $X,Y \subseteq V(G)$.
A bipartite graph with vertex partition $(A,B)$ is {\em $(p,\beta)$-bijumbled} if eq. (\ref{eq:jumbled}) holds whenever $X \subset A$ and $Y \subset B$.

It is easy to see that, if a graph is $(p,\beta)$-jumbled, then it is $s$-joined for any integer $s > \beta p^{-1}$.
In the other direction, Montgomery \cite[Prop.3.26]{Mont19} showed that an Erd\H{o}s-Renyi random graph with $n$ vertices and edge probability $p$ is almost surely $\lceil 10 \log(pn)p^{-1} \rceil$-joined.
Since this graph is typically $(p,O(\sqrt{np}))$-jumbled \cite[Cor. 2.3]{KrivSud05}, this shows that being $(p,\beta)$-jumbled is a stronger property than being $(\beta p^{-1} + 1)$-joined.
The main technical results in section \ref{sec:roll-back} only depend on $\mathcal{G}$ being joined, but, in our application, we are able to obtain stronger results for jumbled graphs.

To generalize these two notions to families of graphs, it is convenient to consider an auxiliary bipartite graph.
Let $\mathcal{G} = \{G_1,\ldots,G_t\}$ be a family of graphs on the same underlying vertex set $V$.
Let $V \times [t]$ be the disjoint union of $t$ copies of $V$.
Denote $V_i = V \times \{i\}$.
The {\em auxiliary bipartite graph} $B_\mathcal{G}$ of $\mathcal{G}$ has a vertex set partition $(V \times [t], V)$.
Two vertices $u_i = \{u,i\} \in V_i$ and $v \in V$ form an edge in $B_\mcl{G}$, denoted $u_i \sim v$, if and only if $u \sim v$ in $G_i$.

We say a graph family $\mathcal{G}$ is $s$-joined if $B_\mathcal{G}$ is $s$-bijoined, and is $(p,\beta)$-jumbled if $B_\mathcal{G}$ is $(p,\beta)$-bijumbled.
\begin{proposition}\label{thmcv:gr:fam}
    If $\mcl{G}=\{G_1,\ldots,G_t\}$, and $G_i$ is $(p,\beta)$-jumbled for each $i$, then $\mathcal{G}$ is a $(p,\beta\sqrt{t})$-jumbled graph family.
\end{proposition}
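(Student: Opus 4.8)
The plan is to unwind the definitions and reduce the statement to one application of the Cauchy--Schwarz inequality. Recall that $\mathcal{G}$ being $(p,\beta\sqrt{t})$-jumbled means precisely that its auxiliary bipartite graph $B_\mathcal{G}$, with vertex partition $(V\times[t],V)$, is $(p,\beta\sqrt{t})$-bijumbled; that is, eq.~(\ref{eq:jumbled}) holds with parameter $\beta\sqrt{t}$ for every $X\subseteq V\times[t]$ and $Y\subseteq V$. So I would fix such sets $X$ and $Y$ and estimate $\bigl|e(X,Y)-p|X|\,|Y|\bigr|$.

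First I would split $X$ across the $t$ copies of $V$: set $X_i=X\cap V_i$ and let $X_i'\subseteq V$ be the image of $X_i$ under the canonical identification $V_i\cong V$. The defining property of $B_\mathcal{G}$ — an edge joins $u_i\in V_i$ to $v\in V$ exactly when $u\sim v$ in $G_i$ — shows that the edges of $B_\mathcal{G}$ split according to the copy index, so that
\[
e(X,Y)=\sum_{i=1}^t e_{G_i}(X_i',Y),\qquad |X|=\sum_{i=1}^t|X_i'|.
\]
Applying the hypothesis that each $G_i$ is $(p,\beta)$-jumbled to the pair $(X_i',Y)$ (which is legitimate, since eq.~(\ref{eq:jumbled}) is required for all pairs of vertex subsets, even overlapping ones) and summing, the triangle inequality gives
\[
\bigl|e(X,Y)-p|X|\,|Y|\bigr|\le\sum_{i=1}^t\bigl|e_{G_i}(X_i',Y)-p|X_i'|\,|Y|\bigr|\le\beta|Y|^{1/2}\sum_{i=1}^t|X_i'|^{1/2}.
\]
Finally, Cauchy--Schwarz bounds $\sum_{i=1}^t|X_i'|^{1/2}\le\sqrt{t}\bigl(\sum_{i=1}^t|X_i'|\bigr)^{1/2}=\sqrt{t}\,|X|^{1/2}$, which yields the claimed inequality.

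There is no serious obstacle here: the argument is essentially a bookkeeping step together with one standard inequality. The only place where anything is lost is the Cauchy--Schwarz step, and that is exactly where the factor $\sqrt{t}$ enters. This loss is genuine for an argument of this shape, since taking the $X_i$ to be disjoint sets of equal size spread over all $t$ copies makes $\sum_i|X_i'|^{1/2}$ as large as $\sqrt{t}\,|X|^{1/2}$, so one cannot improve the jumbledness parameter below order $\beta\sqrt{t}$ by this method.
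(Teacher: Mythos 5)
Your argument is correct and is essentially identical to the paper's proof: both decompose $X$ into its slices $X_i$ over the $t$ copies of $V$, apply the jumbledness of each $G_i$ together with the triangle inequality, and finish with Cauchy--Schwarz to collect the $\sqrt{t}$ factor. The only cosmetic difference is that the paper sums over the set $I$ of indices actually occurring in $X$ (giving $\sqrt{|I|\,|X|}\le\sqrt{t|X|}$) while you sum over all of $[t]$; your closing remark on the sharpness of the $\sqrt{t}$ loss matches the paper's reference to a construction showing this factor cannot be improved.
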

\begin{proof}
    For any two subsets $X\subseteq V(\mcl{G})\times [t]$ and $Y\subseteq V(\mcl{G})$, let $I=X|_I:=\{i:(v,i)\in X\}$, and $X_i=\{v:(v,i)\in X\}$ for each $i\in I$. Note that $\sum_{i\in I} \sqrt{|X_i|}\leq \sqrt{|I|\cdot|X|}$ by the Cauchy-Schwarz inequality. Hence, 
    \begin{eqnarray*}
        \bigl|e_{B_{\mcl{G}}}(X,Y)-p|X||Y|\bigr| &=&\Bigl|\sum_{i\in I} \big[e_{G_i}(X_i,Y)-p|X_i||Y|\big]\Bigr|\\
        &\leq& \sum_{i\in I} \bigl|e_{G_i}(X_i,Y)-p|X_i||Y|\bigr|\\
             &\leq&  \sum_{i\in I} \beta \sqrt{|X_i||Y|}
             \leq  \beta\sqrt{t|X||Y|},
    \end{eqnarray*}
    as required.
\end{proof}
That the $\sqrt{t}$ term in Proposition \ref{thmcv:gr:fam} cannot be improved in general follows from a construction in \cite[Section 6]{ChakLund24}.
The relationship between $s'$-joined families and families of $s$-joined graphs is not as tight as that for jumbled graphs.
For example, suppose that $G_1,\ldots,G_t$ are each the union of an independent set on vertices $v_1,\ldots,v_{s-1}$ with a complete graph on the remaining vertices. Then, each $G_i$ is $s$-joined, but $\mathcal{G}$ is not $s'$-joined for any $s' < t(s-1)+1$.

\subsection{Embedding edge-colored subdivisions of complete graphs}

Our main technical contribution is the colorful version of the roll-back framework which culminates in the Forest Extension Lemma \ref{lem:forestExtension} and the Path Connection Lemma \ref{lem:pathExtension}.
These lemmas give conditions under which we can extend {\em good} embeddings, which are defined in Section \ref{sec:roll-back}.
Here we give some consequences of this technical framework to embed subdivisions and expansions (see below for precise definitions) of complete graphs.

As a point of reference, the following theorem was proved by Dragani\'c, Krivelevich, and Nenadov \cite{DragKrivNen22}.
Recall that a $(n,d,\lambda)$-graph is an $n$ vertex, $d$-regular graph such that the absolute value of each non-trivial eigenvalue is bounded by $\lambda$.
It is an immediate consequence of the Expander Mixing Lemma (see \cite{alon1986eigenvalues}) that an $(n,d,\lambda)$-graph is $(dn^{-1}, \lambda)$-jumbled.

\begin{theorem}[\cite{DragKrivNen22}]\label{thm:drag}
    Let $G$ be an $(n,d,\lambda)$-graph with $240 \lambda < d \leq 2^{-1}n^{1/5}$, and let $D \geq 3$.
    Then, $G$ contains a subdivision of $K_\Delta$ for $\Delta = \lfloor d-80 \lambda D^{1/2} \rfloor$, with the paths between branching vertices being of equal length $\ell$, where $\ell = O(\log(n)/\log(D))$.
\end{theorem}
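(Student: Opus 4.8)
The plan is to deduce the theorem from the roll-back framework of Section~\ref{sec:roll-back} (applied with a single graph, i.e.\ $t=1$), reconstructing the argument of Dragani\'c, Krivelevich and Nenadov~\cite{DragKrivNen22}. Write $\Delta=\lfloor d-80\lambda D^{1/2}\rfloor$. By the Expander Mixing Lemma $G$ is $(p,\beta)$-jumbled with $p=d/n$ and $\beta=\lambda$, hence $s$-joined for every integer $s>\lambda n/d$; fix such an $s=\Theta(\lambda n/d)$, and choose the target path length $\ell$ to be the least even integer with $D^{\ell/2}\ge s$, so that $\ell=\Theta(\log n/\log D)$ as claimed, and put $r=\ell/2$.

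The overall structure is as follows. First apply the Forest Extension Lemma~\ref{lem:forestExtension} to obtain a \emph{good} embedding (in the sense of Section~\ref{sec:roll-back}) of the forest consisting of $\Delta$ isolated vertices, whose images $b_1,\dots,b_\Delta$ are the branching vertices; then enumerate the $\binom{\Delta}{2}$ pairs $\{b_i,b_j\}$ and, one at a time, apply the Path Connection Lemma~\ref{lem:pathExtension} to join $b_i$ to $b_j$ by a path of length exactly $\ell$ internally disjoint from everything embedded so far. Internally, this grows a depth-$r$, branching-factor-$D$ tree rooted at $b_i$ (and another at $b_j$) inside the currently unused vertices; each such tree has at least $D^r\ge s$ leaves, so by $s$-joinedness there is an edge between the two leaf sets, and roll-back then deletes every tree vertex except the $\ell-1$ new internal vertices of the resulting $b_i$--$b_j$ path. (If parity forces trees of depths $r_i+r_j=\ell-1$, it is enough that $D^{r_i},D^{r_j}\ge s$, which the choice of $\ell$ arranges.) After all $\binom{\Delta}{2}$ rounds the surviving embedded vertices form a subdivision of $K_\Delta$ with all paths of length $\ell$.

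Two quantitative points feed the two lemmas. \emph{Room:} the persistent cost of all connections is $\Delta+\binom{\Delta}{2}(\ell-1)=O(d^2\log n/\log D)=o(n)$ using $d\le 2^{-1}n^{1/5}$, while a single tree has at most $(1+o(1))s$ vertices; hence at any moment during a connection at most $(2+o(1))s+o(n)<n/100$ vertices are occupied (here $240\lambda<d$ is used), so the trees are grown inside a set of more than $0.99n$ vertices. \emph{Expansion:} $(p,\beta)$-jumbledness gives $|N_G(U)|\ge n-\lambda^2n^2/(d^2|U|)$, which exceeds $(D+1)|U|$ for $|U|$ in a window of the rough shape $[\lambda^2n/d^2,\,n/D]$; since $D=O(d/\lambda)$ (forced by $\Delta\ge 1$) and $s=\Theta(\lambda n/d)$, all tree levels of size between $\lambda^2n/d^2$ and $s$ lie in this window, which carries the bulk of each tree-growth. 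The subtle point is the first few levels of each tree, where levels are too small for this set bound: handling them --- growing from a branching vertex with a wide base and/or a carefully reserved launching structure --- is what forces each branching vertex to surrender extra degree, and this is the role of the $80\lambda D^{1/2}$ slack. Concretely I expect the Path Connection Lemma to require each of its two endpoints to have on the order of $\lambda D^{1/2}$ free neighbours, while each of the $\Delta-1$ paths at a branching vertex permanently spends one neighbour of that vertex; since the degree $d$ must cover both, one gets $\Delta\le d-80\lambda D^{1/2}$, and the good embedding is maintained through all $\binom{\Delta}{2}$ rounds because each round spends exactly one free neighbour at each endpoint and returns all other tree vertices.

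The main obstacle is exactly this reconciliation near the branching vertices: making the ``$(D,m)$-expander''-type property consumed by the roll-back lemmas compatible with the weakness of $(d/n,\lambda)$-jumbledness on small sets, and tracking the free degree of each branching vertex through the $\binom{\Delta}{2}$ connect-and-roll-back rounds so that (a) the Forest Extension and Path Connection Lemmas can always fire, (b) each round costs one free neighbour at each of its two endpoints and nothing elsewhere, and (c) the budget $d$, minus the reserve forced by the launching step, still exceeds $\Delta-1$. Verifying that this reserve has the right order $\lambda D^{1/2}$ and constant $80$, that $D^r=\Theta(s)$ with $r=\Theta(\log n/\log D)$, and that all $\binom{\Delta}{2}$ paths can simultaneously be made to have the same length $\ell$, is where the real work lies; everything else is routine bookkeeping once the invariant has been set up.
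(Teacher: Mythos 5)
This theorem is quoted from \cite{DragKrivNen22}; the paper gives no proof of it, so the only internal point of comparison is the proof of Theorem \ref{th:jumbledTopMinor}, which generalizes it. Your overall plan (roll-back, trees grown from both ends, a joined-type property to find the connecting edge) is indeed the right method, but as structured it has a genuine gap at the branching vertices.

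The gap is the incompatibility between the degree of the branching vertices and the degree cap in the good-embedding framework. In the $(s,D)$-good definition, and in the hypotheses of the Path Connection Lemma \ref{lem:pathExtension} ($\mathrm{deg}_{H_i}(a),\mathrm{deg}_{H_j}(b)\le D-1$) and of Lemma \ref{lemcv:vtx:exten} ($\mathrm{deg}_{H_r}(w)<D$), the parameter $D$ is the \emph{same} $D$ that controls the branching factor of the connecting trees and hence the path length $\ell=O(\log n/\log D)$; it is small, whereas each branching vertex must end up with degree $\Delta-1\approx d\gg D$. So you cannot ``enumerate the $\binom{\Delta}{2}$ pairs $\{b_i,b_j\}$ and apply the Path Connection Lemma to join $b_i$ to $b_j$'': after $D-1$ connections the lemma's degree hypothesis fails at $b_i$, and raising $D$ to $\approx d$ destroys the bound on $\ell$ and the expansion requirement $|\Gamma(X)|>(D+1)|X|+|\phi(\mcl{H})|$. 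The fix, used both in \cite{DragKrivNen22} and in this paper's proof of Theorem \ref{th:jumbledTopMinor}, is to take the branching vertices \emph{together with all $\Delta-1$ of their neighbours} as a star forest, embed that star forest outside the roll-back framework using the degree/eigenvalue (jumbledness) condition directly (Lemmas \ref{lem:minMonoDeg} and \ref{th:embedStarForest}), and then run the Path Connection Lemma only between \emph{leaves} of distinct stars (which have degree $1\le D-1$), with the high-degree centres excised from the host so that the graph being extended has maximum degree at most $D$. Your ``each round spends exactly one free neighbour at each endpoint'' accounting presupposes the centres are the endpoints, which is exactly what the framework forbids; the $80\lambda D^{1/2}$ loss arises not from a per-round degree charge at the centres but from the leaves of each star that must be discarded. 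Two smaller slips: $\Delta\ge 1$ forces only $D=O((d/\lambda)^2)$, not $D=O(d/\lambda)$ (the latter is what the room condition $4sD<n$ actually requires and must be argued separately); and the small-set expansion problem you flag is handled in this framework not by ``wide bases'' at the centres but by reserving a set $Y_0$ and deleting a bad set $U_0$ as in Proposition \ref{prop:bijumb:exten} and Lemma \ref{lemcv:sjoin:gdembd}, so that arbitrary embeddings of root-only graphs into $W$ are automatically $(2s,D)$-good.
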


Our main application is the following generalization of Theorem \ref{thm:drag}

\begin{theorem}\label{th:jumbledTopMinor}
    Let $\mathcal{G} = \{G_1, G_2, \ldots, G_t\}$ be a family of $(p,\beta)$-jumbled graphs on the same vertex set
    $V=V(\mathcal{G})$.
    Let $s=2t^{1/2}\beta p^{-1}$, let $D \geq 3$, and let $\ell=2\lceil \log s/\log(D-1) \rceil +3$.
    Let $8 \ell^{-1} < c < 1$, and let $3 \leq \Delta \leq (1-c)p|V|$.
    Let $\mathcal{H}$ be a $[t]$-edge-colored subdivision of $K_\Delta$, and paths between branching vertices having length at least $\ell$.
    If {$|V(\mathcal{H})| \leq |V|-6sD$}, then $\mcl{G}$ contains $\mcl{H}$.
\end{theorem}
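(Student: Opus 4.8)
The plan is to deduce the theorem from the Forest Extension Lemma~\ref{lem:forestExtension} and the Path Connection Lemma~\ref{lem:pathExtension} by prescribing an explicit order in which to grow a \emph{good} embedding of $\mathcal{H}$, and then checking the hypotheses of those lemmas at each step. First I would record the only global fact about $\mathcal{G}$ that is needed: by Proposition~\ref{thmcv:gr:fam} the family is $(p,\beta\sqrt{t})$-jumbled, i.e.\ $B_{\mathcal{G}}$ is $(p,\beta\sqrt{t})$-bijumbled, and since $s=2t^{1/2}\beta p^{-1}>\beta\sqrt{t}\,p^{-1}$ the jumbled-implies-joined computation gives that $B_{\mathcal{G}}$ is $s$-bijoined, i.e.\ $\mathcal{G}$ is $s$-joined. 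I would then fix notation for $\mathcal{H}$: branch vertices $b_1,\dots,b_\Delta$, and for $i<j$ a path $P_{ij}$ of length $\ell_{ij}\ge\ell$ joining $b_i$ to $b_j$ with a prescribed colour on each of its edges. Every branch vertex has degree $\Delta-1$ in $\mathcal{H}$, every internal path vertex has degree $2$, and the expansion trees introduced below have internal degree at most $D$; since $6sD\le|V|$ and $\beta\ge 1$ one checks $\Delta-1,D=O(p|V|)$, so all target degrees lie in the range the extension lemmas tolerate, where the available slack is governed by $\Delta\le(1-c)p|V|$.

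Fix the spanning star of $K_\Delta$ centred at the index $1$, and let $S^{\ast}\subseteq\mathcal{H}$ be the subdivided star it spans, namely the tree consisting of $b_1,\dots,b_\Delta$ together with the internal vertices of the $\Delta-1$ paths $P_{1k}$. Starting from an arbitrary placement of $b_1$, I would embed $S^{\ast}$ by repeatedly applying Lemma~\ref{lem:forestExtension}, adding the vertices of $S^{\ast}$ in breadth-first order from $\phi(b_1)$, each new leaf along an edge of the colour prescribed by $\mathcal{H}$, and each $b_k$ introduced with target degree $\Delta-1$ (its degree in $\mathcal{H}$) rather than its smaller degree in $S^{\ast}$, so that the capacity later needed at $b_k$ is reserved from the start. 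After this step all branch vertices are embedded, every path $P_{1k}$ is fully embedded, and each $b_k$ with $k\ge 2$ still has $\Delta-2$ free edge-slots.

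Next, for each pair $\{i,j\}$ with $2\le i<j\le\Delta$ in turn, I would route $P_{ij}$ using Lemma~\ref{lem:pathExtension} in the roll-back manner. Put $m=\lceil\log s/\log(D-1)\rceil$ and split $\ell_{ij}=\ell_1+1+\ell_2$ with $\ell_1=\lceil(\ell_{ij}-1)/2\rceil$ and $\ell_2=\lfloor(\ell_{ij}-1)/2\rfloor$; since $\ell_{ij}\ge\ell=2m+3$ both $\ell_1,\ell_2\ge m$. From $b_i$ grow a ``broom'': a bare path of length $\ell_1-m$ along the first colours of $P_{ij}$ (using $b_i$'s free slot of the appropriate colour), capped by a complete $(D-1)$-ary tree of depth $m$ whose level-$k$ edges carry the next colours of $P_{ij}$; as $(D-1)^m\ge s$, its leaf set has size at least $s$. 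From $b_j$ grow the symmetric broom along the reversed colour sequence, again with at least $s$ leaves. Now invoke the $s$-joined property of $\mathcal{G}$ with $X=\{(\phi(x),c):x\text{ a leaf of the first broom}\}$, where $c$ is the $(\ell_1+1)$-st colour of $P_{ij}$, and $Y=\{\phi(y):y\text{ a leaf of the second broom}\}$: since $|X|,|Y|\ge s$ there is an edge of $B_{\mathcal{G}}$ between them, that is, a leaf $x$ of the first broom and a leaf $y$ of the second with $\phi(x)\phi(y)\in E(G_c)$. This realizes $P_{ij}$ as the concatenation of the first broom's handle, its root--$x$ brush path, the edge $xy$, and the second broom's $y$--root brush path and handle. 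I would then roll back (delete) every brush vertex of the two brooms not lying on this concatenation, and the roll-back property built into Lemma~\ref{lem:pathExtension} guarantees the outcome is again a good embedding, now extended by $P_{ij}$. Processing all such pairs yields a good embedding of $\mathcal{H}$, hence an embedding of $\mathcal{H}$ into $\mathcal{G}$.

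The bookkeeping is that at every moment the used vertices form the part of $\mathcal{H}$ embedded so far (at most $|V(\mathcal{H})|$ vertices, since handles are already counted there) together with at most two transient brushes, each a $(D-1)$-ary tree of depth $m$ and hence of size at most $2(D-1)^m<2Ds$; the term $6sD$ in the hypothesis $|V(\mathcal{H})|\le|V|-6sD$ is chosen to absorb both these $O(sD)$ transient vertices and the further slack the good-embedding invariants demand, so the image never fills $V$. The main obstacle is precisely the verification that the good-embedding invariant survives the entire interleaving of forest extensions, broom growth, and roll-backs --- in particular that a branch vertex carrying its full reserved degree $\Delta-1$ plus a transient broom still expands inside every colour class, and that deleting the unused brush of the $\{i,j\}$-broom leaves untouched the capacity needed for the paths $P_{ij'}$ not yet routed. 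Matching these conditions to the hypotheses of Lemmas~\ref{lem:forestExtension} and~\ref{lem:pathExtension} is where $\Delta\le(1-c)p|V|$ and $8\ell^{-1}<c$ are consumed; granting the two lemmas, the remaining work is organizational rather than a new estimate.
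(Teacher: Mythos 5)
There is a genuine gap at the very first step: you cannot embed the branch vertices through the good-embedding machinery. A branch vertex of the subdivision has degree $\Delta-1$, possibly all in one colour, and Theorem \ref{th:jumbledTopMinor} does \emph{not} assume $\Delta^{mon}(\mcl{H})\leq D$; here $\Delta$ can be as large as $(1-c)p|V|$ while $D$ is a small constant. The Forest Extension Lemma and the definition of an $(s,D)$-good embedding reserve exactly $D$ slots per colour at every vertex --- the extension step even requires $\mathrm{deg}_{H_r}(w)<D$ --- so there is no way to ``introduce $b_k$ with target degree $\Delta-1$'' inside a framework run with parameter $D$. If you instead run the framework with parameter $\Delta$ in place of $D$, the vertex-count conditions become $|V(\mcl{H})|\leq n-O(s\Delta)$, and $s\Delta=2t^{1/2}\beta p^{-1}\cdot(1-c)p|V|=2(1-c)t^{1/2}\beta|V|$ is comparable to or larger than $|V|$; the hypothesis only grants slack $6sD$. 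Relatedly, your appeal to the Path Connection Lemma for the pairs $\{i,j\}$ with $i,j\ge 2$ needs $\mathrm{deg}_{H_i}(b_i)\leq D-1$ at the endpoints, which fails for the same reason.

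The underlying issue is that you reduce the jumbled hypothesis to $s$-joinedness at the outset and never use it again, whereas the paper stresses that joinedness alone cannot produce vertices of monochromatic degree $\approx p|V|$. The paper's proof first embeds the star forest $\mcl{F}$ consisting of the branch vertices together with their neighbours \emph{directly}, outside the good-embedding framework, by repeatedly invoking the jumbled property to locate a vertex of minimum monochromatic degree at least $(1-c)p|V|\geq\Delta$ (Lemmas \ref{lem:minMonoDeg} and \ref{th:embedStarForest}); the condition $c>8\ell^{-1}$ is consumed in checking $|V(\mcl{F})|\leq\Delta(\Delta+1)<c|V|/2-5sD$. It then declares the leaves of $\mcl{F}$ to be roots, deletes the high-degree centres from the host, and applies the Path Connection Lemma only between degree-one leaves, where the $\mathrm{deg}\leq D-1$ condition holds. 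Your routing of the remaining paths via brooms and roll-back is essentially the (already proved) Path Connection Lemma, but without a jumbledness-based construction of the star forest the argument does not get off the ground.
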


Theorem \ref{th:jumbledTopMinor} generalizes Theorem \ref{thm:drag} even in the monochromatic case.
The main strength of Theorem \ref{thm:drag} is that it can be used to embed subdivisions of $K_\Delta$ into a $(n,d,\lambda)$ graph when $\Delta = (1-\epsilon)d$ for any $\epsilon > 0$.
One major limitation is that it only works when $d \leq 2^{-1}n^{1/5}$.
By contrast, Theorem \ref{th:jumbledTopMinor} allows us to embed an $\ell$-subdivision of $K_\Delta$ into a $(p,\beta)$-jumbled graph on $n$ vertices for $\Delta=(1-\epsilon)np$ when $np = O(n^{1/2}\ell^{-1/2})$, improving the earlier $O(n^{1/5})$ bound.

In fact, there are two trivial obstructions to embedding a subdivision $H$ of $K_\Delta$ into a graph $G$: $\Delta$ might be greater than the maximum degree of $G$, or $|V(H)|$ could be larger than $|V(G)|$.
Theorem \ref{th:jumbledTopMinor} shows that, for sufficiently pseudo-random graph families, we can get arbitrarily close to both obstructions.

In the proof of Theorem \ref{th:jumbledTopMinor}, we first use the jumbled property to embed the branching  vertices and their neighbors, and then use the Path Connection Lemma \ref{lem:pathExtension} to join the appropriate vertices.
To use the Path Connection Lemma, we only need that $\mcl{G}$ is $(\beta p^{-1})$-joined.
However, we need the stronger jumbled hypothesis in the first step, because a $(\beta p^{-1})$-joined graph on $n$ vertices may not have vertices of degree $pn$.
For example, a $(\beta p^{-1})$-joined Erd\H{o}s-Renyi random graph on $n$ vertices would have edge probability $O(\log(n)p\beta^{-1})$, and correspondingly small maximum degree.

For graph families that are joined but not jumbled, we have the following bound, which is quantitatively weaker than Theorem \ref{th:jumbledTopMinor} when applied to jumbled graphs.

\begin{theorem}\label{th:joinedTopMin}
    For $s,t\geq 1$, $D\geq 3$, let $\ell = 2\lceil\log s/\log(D-1)\rceil +3$. Let $\mcl{G}=\{G_1,\ldots,G_t\}$ be an $s$-joined graph family on $n$ vertices. 
    Let $\mcl{H}$ be a $[t]$-edge-colored subdivision of $K_D$ with at least $\ell$ vertices between each branch vertex and $\Delta^{mon}(\mcl{H})\leq D$.
    If {$|V(\mcl{H})| \leq n - 6sD$}, then $\mcl{G}$ contains $\mcl{H}$.
\end{theorem}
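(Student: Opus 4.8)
The plan is to build $\mathcal H$ inside $\mathcal G$ in two stages, maintaining a \emph{good} embedding (as defined in Section~\ref{sec:roll-back}) at every step: first use the Forest Extension Lemma~\ref{lem:forestExtension} to embed a small forest consisting of the $D$ branch vertices together with one interior neighbour of each incident path, and then, one subdivided edge at a time, use the Path Connection Lemma~\ref{lem:pathExtension} to join the corresponding pair of already-embedded vertices by a path of the prescribed length and colors. This is the same two-stage strategy used for Theorem~\ref{th:jumbledTopMinor}; the difference is that here we have no jumbledness available to place the branch vertices, so we use the Forest Extension Lemma for that step as well.

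Concretely, label the branch vertices $u_1,\dots,u_D$, and for $a\neq b$ let $P_{ab}$ be the subdivided path of $\mathcal H$ joining $u_a$ to $u_b$, of length $m_{ab}\geq \ell$, with interior vertices $w^{ab}_1,\dots,w^{ab}_{m_{ab}-1}$ and successive edge colors $c^{ab}_1,\dots,c^{ab}_{m_{ab}}$. Let $F\subseteq\mathcal H$ be the forest on $\{u_a\}\cup\{w^{ab}_1:a\neq b\}$ in which $u_a$ is joined with color $c^{ab}_1$ to $w^{ab}_1$ for every $b\neq a$. Since $\ell\geq 3$, the vertices $w^{ab}_1$ over all ordered pairs are distinct, so $F$ is a disjoint union of $D$ stars, $|V(F)|=D^2\leq |V(\mathcal H)|\leq n-6sD$, every vertex of $F$ has degree $\leq D-1$, and $\Delta^{mon}(F)\leq\Delta^{mon}(\mathcal H)\leq D$. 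As $\mathcal G$ is $s$-joined, the Forest Extension Lemma~\ref{lem:forestExtension}, applied by adding the vertices of $F$ one leaf at a time, yields a good embedding $\phi_0$ of $F$ into $\mathcal G$.

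For the second stage, enumerate the pairs $\{a,b\}$ with $a<b$ as $e_1,\dots,e_M$, $M=\binom D2$, and suppose inductively that we have a good embedding $\phi_{j-1}$ of $F\cup P_{e_1}\cup\dots\cup P_{e_{j-1}}$. Writing $e_j=\{a,b\}$, the vertices $w^{ab}_1$ and $w^{ab}_{m_{ab}-1}$ are already embedded and the edges $u_aw^{ab}_1$, $u_bw^{ab}_{m_{ab}-1}$ are realized with their colors, so it remains to connect $\phi_{j-1}(w^{ab}_1)$ to $\phi_{j-1}(w^{ab}_{m_{ab}-1})$ by a path of length $m_{ab}-2\geq \ell-2=2\lceil\log s/\log(D-1)\rceil+1$ realizing the colors $c^{ab}_2,\dots,c^{ab}_{m_{ab}-1}$. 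This is exactly the conclusion of the Path Connection Lemma~\ref{lem:pathExtension}: internally, it grows from each of the two endpoints a colored auxiliary tree of branching factor $D-1$ and depth $\lceil\log s/\log(D-1)\rceil$, so that each tree has at least $s$ leaves among the unused vertices with the level-$k$ edges carrying the appropriate color of $P_{ab}$; it then uses the $s$-joined property of $\mathcal G$ to find an edge in the color class of the remaining middle edge between the two leaf sets, closing up a colored path of the required length, and rolls back the unused auxiliary vertices. The two vertices to spare in $\ell=2\lceil\log s/\log(D-1)\rceil+3$ are precisely the two interior neighbours embedded in Stage~1, and any surplus $m_{ab}-\ell$ is absorbed by lengthening one of the trees. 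Taking $\phi_j$ to be the resulting good embedding and iterating to $j=M$, $\phi_M$ is an embedding of $\mathcal H$.

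I expect the main obstacle to be the bookkeeping that keeps the good-embedding invariant alive through each roll-back. Two points need care. First, the colored branching: growing a depth-$\lceil\log s/\log(D-1)\rceil$, branching-factor-$(D-1)$ tree requires that every current vertex still has $D-1$ available neighbours in the prescribed color class, which is what the good-embedding definition is set up to guarantee, and the allowed monochromatic degree $D$ is exactly what permits a virtual internal tree vertex to have $D-1$ children and one parent. Second, and more delicate, one must check that the reservoir of unused vertices is never exhausted: the worst moment is inside a Path Connection step, where the image of the current partial embedding is at most $|V(\mathcal H)|$ plus the two auxiliary trees, and since $\sum_{k\leq\lceil\log s/\log(D-1)\rceil}(D-1)^k\leq 2(D-1)s<2sD$ for $D\geq 3$, each tree costs less than $2sD$; the hypothesis $|V(\mathcal H)|\leq n-6sD$ then leaves at least $2sD$ unused vertices at all times, which is what the good-embedding condition needs. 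Turning this outline into the stated constant $6$ is where the precise statements of Lemmas~\ref{lem:forestExtension} and~\ref{lem:pathExtension} are used.
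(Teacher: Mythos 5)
Your overall two-stage strategy (embed a small rooted forest, then join pairs of embedded vertices with the Path Connection Lemma) is indeed the paper's strategy, but there are two genuine gaps. The more serious one is initialization: the Forest Extension Lemma~\ref{lem:forestExtension} \emph{extends} an existing $(2s,D)$-good embedding of a base graph $\mcl{H}_0$; it does not produce one from nothing. Your claim that ``as $\mathcal G$ is $s$-joined, the Forest Extension Lemma \dots yields a good embedding $\phi_0$ of $F$'' is unjustified: if the base consists of the isolated roots $u_1,\dots,u_D$, goodness of their embedding already requires $|\Gamma_{\mcl{G}}(X)\setminus\phi(\mcl{H}_0)|\geq D|X|$ for singletons $X$, and an $s$-joined family may contain vertices of degree $0$ in some color. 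The paper devotes Proposition~\ref{prop:bijumb:exten} and Lemma~\ref{lemcv:sjoin:gdembd} to exactly this point: one deletes a bad set $U_0$ of at most $s$ color--vertex pairs and reserves a buffer $Y_0$ of $3sD+4s$ vertices so that \emph{every} root-only embedding into the remaining set $W$ is $(2s,D)$-good in $\mcl{G}[V']$; this reservation is also where part of the constant $6sD$ comes from. Without this step your induction never gets off the ground, and your vertex-count accounting omits the buffer entirely.

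The second gap is the length bookkeeping in Stage~2. Because you pre-embed $w^{ab}_1$ and $w^{ab}_{m_{ab}-1}$, the path you must realize between them has length $m_{ab}-2\geq 2\lceil\log s/\log(D-1)\rceil+1$, which is \emph{below} the threshold $2k+3$ required by the Path Connection Lemma~\ref{lem:pathExtension}. The slack of $3$ in $\ell$ is not ``the two interior neighbours embedded in Stage~1'': it is consumed inside the lemma's own construction (each auxiliary tree contributes $k+1$ edges to the final path and the middle edge one more, totalling $2k+3$), so a colored path of length $2k+1$ cannot be produced this way. The fix --- and what the paper actually does, by deducing the theorem from Theorem~\ref{thmcv:join:pthcon} with $\mcl{H}_0$ equal to the set of branch vertices --- is to connect $u_a$ to $u_b$ directly by the full path $P_{ab}$ of length $m_{ab}\geq\ell$; the hypothesis $\mathrm{deg}_{H_i}(u_a)\leq D-1$ of the Path Connection Lemma is then satisfied because the edge of $P_{ab}$ at $u_a$ is not yet present and $\Delta^{mon}(\mcl{H})\leq D$.
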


Unlike Theorem \ref{th:jumbledTopMinor}, Theorem \ref{th:joinedTopMin} does not recover Theorem \ref{thm:drag} as a special case.
Indeed, if $G$ is an $(n,d,\lambda)$-graph, then it is $(n\lambda d^{-1})$-joined.
In order to apply Theorem \ref{th:joinedTopMin}, we need to have $5s\Delta = 5n \lambda d^{-1} \Delta < n$, which rearranges to $\Delta = O(d \lambda^{-1})$.
By the Alon-Boppana bound \cite{alon1986eigenvalues,nilli1991second}, either the diameter of $G$ is at most $3$, or $\lambda = \Omega(\sqrt{d})$.
Consequently, for $(n,d,\lambda)$-graphs with diameter at least $4$, we can only apply Theorem \ref{th:joinedTopMin} for $D = O(\sqrt{d})$.

On the other hand, the conclusion of Theorem \ref{th:joinedTopMin} is nearly tight.
To see this, consider an Erd\H{o}s-Renyi random graph.
As discussed above, an Erd\H{o}s-Renyi random graph on $n$ vertices with edge probability $p$ is $O(\log(pn) p^{-1})$-joined.
Hence, Theorem \ref{th:joinedTopMin} implies that we can find a subdivision of $K_D$ for $D = O(np \log(np)^{-1})$.
So we can see that the reason that Theorem \ref{th:joinedTopMin} does not give a tight result for $(n,d,\lambda)$-graphs is that it depends on the joined hypothesis, which is weaker than the jumbled hypothesis of Theorem \ref{th:jumbledTopMinor}.

We can embed models of larger complete graphs into $s$-joined families if we are willing to settle for finding them as minors, instead of as topological minors.
Before stating the result, we introduce a few definitions.
We say that a graph $H'$ is an {\em expansion} of a graph $H$ if $V(H')$ is the union of sets $T_x$ for $x \in V(H)$ and $P_{yz}$ for $yz \in E(H)$ such that: (1) each $H'[T_x]$ is a tree, (2) each $H'[P{yz}]$ is a path, (3) the $T_x$ and $P_{yz}$ are pairwise disjoint, except possibly at end vertices of the paths, (5) $P_{yz} \cap T_x = \emptyset$ if $x \notin \{y,z\}$, (6) one end vertex of $P_{yz}$ belongs to $T_y$, and the other end vertex belongs to $T_z$.
We refer to the $H'[T_x]$ as {\em branches}, and the $H'[P_{yz}]$ as {\em paths}.

\begin{theorem}\label{th:joinedMin}
    For $s,t\geq 1$, $D \geq 3$, let $\ell = 2 \lceil \log s/\log(D - 1)\rceil +3$. Let $\mcl{G} = \{G_1,\ldots,G_t\}$ be an $s$-joined graph family on $n$ vertices.
    Let $\mcl{H}$ be a $[t]$-edge-colored expansion of $K_D$ such that each path of $\mcl{H}$ has length at least $\ell$ and $\Delta^{mon}(\mcl{H}) \leq D$.
    If {$|V(\mcl{H})| \leq n - 6s\Delta$}, then $\mcl{G}$ contains $\mcl{H}$.
\end{theorem}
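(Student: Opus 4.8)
The plan is to mimic the two-phase argument behind Theorem~\ref{th:joinedTopMin}, with the branch \emph{trees} of the expansion playing the role of the branch \emph{vertices} of a subdivision. Write $\mcl{H} = F \cup \bigcup_{yz \in E(K_D)} P_{yz}$, where $F = \bigcup_{x \in V(K_D)} T_x$ is the forest of branches and, for each edge $yz$ of $K_D$, the path $P_{yz}$ has a designated end $a_{yz} \in V(T_y)$, a designated end $b_{yz} \in V(T_z)$, a length $\ell_{yz} \ge \ell$, and a prescribed colour sequence; note $\Delta^{mon}(F) \le \Delta^{mon}(\mcl{H}) \le D$. First I would start from the (vacuously good) empty embedding and apply the Forest Extension Lemma~\ref{lem:forestExtension} to produce a good embedding $\phi$ of the branch forest $F$ into $\mcl{G}$. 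The size hypothesis of that lemma holds because $|V(F)| \le |V(\mcl{H})| \le n - 6s\Delta$, which leaves room both for the goodness buffer (of order $sD$) and for the $O(s)$ vertices temporarily occupied during the second phase.

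Next I would process the $\binom{D}{2}$ edges $yz$ of $K_D$ one at a time, keeping the invariant that the current embedding is good and equals $\phi$ augmented by realizations of the paths handled so far. When $yz$ is reached, its endpoints $a_{yz}, b_{yz}$ are already embedded (they lie in $F$), so the Path Connection Lemma~\ref{lem:pathExtension} applies and extends the current good embedding by a path from $\phi(a_{yz})$ to $\phi(b_{yz})$ of length exactly $\ell_{yz}$ realizing the colours of $P_{yz}$. Internally this embeds all but the last $\Theta(\log s/\log(D-1))$ edges of $P_{yz}$ as an ordinary leaf-by-leaf forest extension, then grows two colour-respecting search trees of depth $\lceil \log s/\log(D-1)\rceil$, one off the free end of that path and one off $\phi(b_{yz})$ — each branching with factor $D-1$, since goodness supplies $D-1$ free neighbours of the next required colour at every newly added frontier vertex, so that each leaf set exceeds $s$ — locates the single connecting edge of the correct colour between the two leaf sets via the $s$-joined property of $\mcl{G}$, and rolls back the unused tree vertices. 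After all $\binom{D}{2}$ steps the embedding realizes $\mcl{H}$, so $\mcl{G}$ contains $\mcl{H}$.

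The main obstacle is the bookkeeping that certifies goodness at every intermediate step. One must check that the free vertex set never drops below the threshold required for goodness — the tightest moment being inside a single path connection, where the two search trees inflate the embedded set by $O(s)$ beyond $|V(\mcl{H})|$ — and that for no vertex does the sum of its branch-tree edges, its path edges, and the transient search-tree edges in any single colour exceed its free-degree budget; this is where $\Delta^{mon}(\mcl{H}) \le D$, the choice $\ell = 2\lceil \log s/\log(D-1)\rceil + 3$, and the slack $|V(\mcl{H})| \le n - 6s\Delta$ are used. Once the Forest Extension Lemma and the Path Connection Lemma are available with explicit constants, the rest is the routine verification that their hypotheses hold at each invocation, essentially as in the proof of Theorem~\ref{th:joinedTopMin}.
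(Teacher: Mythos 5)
Your overall architecture --- first embed the branch forest, then attach the $\binom{D}{2}$ paths one at a time via the Path Connection Lemma --- is exactly the paper's route (the paper derives this theorem and Theorem \ref{th:joinedTopMin} from a common statement, Theorem \ref{thmcv:join:pthcon}, taking $\mcl{H}_0$ to be the set of branches here rather than the branching vertices). However, there is a genuine gap at the very first step: you assert that the empty embedding is ``vacuously good'' and that the Forest Extension Lemma can therefore be applied directly to grow the branch forest. This is false for a general $s$-joined family. Goodness of the empty embedding requires $|\Gamma_{\mcl{G}}(X)| \geq D|X|$ for \emph{every} $X \subseteq V(\mcl{G}) \times [t]$ with $|X| \leq 2s$, i.e.\ a small-set expansion property, and $s$-joinedness says nothing about sets of size below $s$. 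In particular, an $s$-joined family may contain a vertex $v$ with $\deg_{G_i}(v) < D$ (even an isolated vertex), and then $R(\{(v,i)\},\phi) = |\Gamma_{G_i}(v)| - D < 0$, so the empty embedding is not good --- the paper explicitly flags this obstruction at the start of Section \ref{sec:application}. Moreover, the Forest Extension Lemma only adds non-root leaves, so it cannot produce the roots of the branch trees from nothing: you need a good embedding of a root-only graph to seed the iteration.

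The paper spends Proposition \ref{prop:bijumb:exten} and Lemma \ref{lemcv:sjoin:gdembd} on precisely this initialization: reserve a buffer $Y_0$ of about $3sD+4s$ vertices, delete the (at most $s$) vertices of a maximal poorly-expanding set $U_0$, and obtain $W \subseteq V' \subseteq V(\mcl{G})$ with $|W| \geq n-3sD-5s$ such that \emph{any} injective placement of root vertices into $W$ is a $(2s,D)$-good embedding into $\mcl{G}[V']$. With that in hand, your argument goes through: place one root per branch tree in $W$, apply the Forest Extension Lemma to complete the branch forest, then the Path Connection Lemma for each path. Your size bookkeeping (the slack $n-6sD$ covers the goodness buffer, the reserved set $Y_0$, and the $O(sD)$ transient search-tree vertices) and your account of the internal mechanics of the path connection are correct. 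So the missing idea is not in the iteration but in how the iteration is seeded.
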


In Theorem \ref{th:joinedMin}, the maximum degree of the underlying graph is not a limitation - we can always find an expansion of a complete graph that covers nearly all of the vertices of $\mcl{G}$, provided that $V(\mcl{G})$ is large enough as a function of $s,\Delta$.

\subsection{Embedding distance graphs into subsets of finite vector spaces}

Let $q$ be an odd prime power.
For $r \in \mathbb{F}_q$, let $G_r$ be the graph on vertex set $\mathbb{F}_q^d$ with $x \sim y$ if $\|x-y\| = r$.
The following result follows from work of Iosevich and Rudnev \cite{IosRud07}, and independently Medrano, Myers, Stark, and Terras \cite{MedMST96}.
\begin{theorem}[\cite{IosRud07,MedMST96}]\label{thm:distgr:ndlamd}
    If $r\neq 0$, then $G_r$ is $(q^{-1}+O(q^{-(d+1)/2}), 2q^{(d-1)/2})$-jumbled.
\end{theorem}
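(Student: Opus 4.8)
The plan is to recognize $G_r$ as an $(n,d',\lambda)$-graph with $n=q^d$, $d'=|S_r|$ for the ``sphere'' $S_r=\{x\in\mathbb{F}_q^d:\|x\|=r\}$, and $\lambda\le 2q^{(d-1)/2}$, and then to invoke the implication recorded before Theorem~\ref{thm:drag}: an $(n,d',\lambda)$-graph is $(d'n^{-1},\lambda)$-jumbled. Granting this, it only remains to check that $d'n^{-1}=|S_r|q^{-d}=q^{-1}+O(q^{-(d+1)/2})$, which pins down both parameters in the statement.

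First I would observe that $G_r$ is the Cayley graph of $(\mathbb{F}_q^d,+)$ with connection set $S_r$; since $\|-x\|=\|x\|$ the set $S_r$ is symmetric, and since $r\neq 0$ we have $0\notin S_r$, so $G_r$ is a simple, $|S_r|$-regular graph without loops. Writing $\mathbf{1}[\|x\|=r]=q^{-1}\sum_{t\in\mathbb{F}_q}\psi(t(\|x\|-r))$ for the canonical additive character $\psi$, isolating the $t=0$ term, and evaluating the one-variable Gauss sums $\sum_{x}\psi(tx^2)=\eta(t)g$ (with $g=\sum_{x}\psi(x^2)$, $|g|=q^{1/2}$; here $q$ odd makes $\|\cdot\|$ nondegenerate), a short calculation gives $|S_r|=q^{d-1}+O(q^{(d-1)/2})$, and hence $|S_r|q^{-d}=q^{-1}+O(q^{-(d+1)/2})$.

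Next I would bound the nontrivial eigenvalues, which for a Cayley graph on an abelian group are the character sums $\lambda_\xi=\sum_{x\in S_r}\psi(\xi\cdot x)$ over $\xi\in\mathbb{F}_q^d$, with $\xi=0$ giving the trivial eigenvalue $|S_r|$ (of multiplicity one, since $S_r$ spans $\mathbb{F}_q^d$ for $d\ge2$). For $\xi\neq 0$, inserting the same indicator identity kills the $t=0$ term, and completing the square in each coordinate yields
\[
\lambda_\xi=\frac{g^d}{q}\sum_{t\in\mathbb{F}_q^{*}}\eta(t)^d\,\psi\!\left(-tr-\frac{\|\xi\|}{4t}\right).
\]
When $\|\xi\|\neq 0$ the inner sum is a Kloosterman sum for $d$ even and a Salié sum for $d$ odd, and in either case its modulus is at most $2q^{1/2}$ (by the Weil bound, respectively the explicit evaluation of Salié sums); when $\|\xi\|=0$ it degenerates to $\sum_{t\neq0}\psi(-tr)=-1$ for $d$ even and to a Gauss sum of modulus $q^{1/2}$ for $d$ odd. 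In every case $|\lambda_\xi|\le q^{(d/2)-1}\cdot 2q^{1/2}=2q^{(d-1)/2}$, so $G_r$ is a $(q^d,|S_r|,2q^{(d-1)/2})$-graph and the Expander Mixing Lemma finishes the proof. All of this except the Weil bound for the Kloosterman/Salié sums is routine Gauss-sum bookkeeping, and is precisely the content of \cite{IosRud07,MedMST96}, so the only genuinely nontrivial ingredient is that classical bound; the one point that needs care is separating the isotropic case $\|\xi\|=0$, where ``completing the square'' loses the quadratic-in-$t$ term but the remaining sum is strictly easier to estimate.
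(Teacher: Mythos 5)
Your proof is correct and follows exactly the route the paper intends: the paper states this result as a citation to \cite{IosRud07,MedMST96}, whose argument is precisely your Cayley-graph/Gauss-sum/Kloosterman--Sali\'e computation of the sphere size and eigenvalues, combined with the Expander Mixing Lemma reduction that the paper itself records just before Theorem~\ref{thm:drag}. No gaps; the separation of the isotropic case $\|\xi\|=0$ is handled correctly.
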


We obtain the following results as immediate corollaries to Theorems \ref{th:jumbledTopMinor} and \ref{thm:distgr:ndlamd}.
Both results concern finding specified subdivisions of complete graphs in arbitrary sets $E \subseteq \mathbb{F}_q^d$.
In Theorem \ref{th:ffApplicationSmalld0}, we set $D=3$ in the application of Theorem \ref{th:jumbledTopMinor} to minimize the size of $E$ required to find the target graph.
In Theorem \ref{th:ffApplicationLarged0}, we set $D = q^\epsilon$ for $\epsilon>0$ to find subdivisions of complete graphs with constant distance between the branch vertices.

\begin{theorem}\label{th:ffApplicationSmalld0}
    Let $E \subseteq \mathbb{F}_q^d$ and let  $R \subseteq \mathbb{F}_q \setminus \{0\}$.
    and let $\ell = (d+2) \lceil \log_2(q) \rceil + 16$.
    Let 
    $|E| > C = 72|R|^{1/2}q^{(d+1)/2}$.
    Let $\mathcal{H}$ be an $R$-distance subdivision of a complete graph with distance at least $\ell$ between each pair of branching vertices.
    If $d \geq 3$ and $|V(\mcl{H})| \leq |E|-C$, then $E$ contains $\mcl{H}$.
    If $d = 2$ and $\Delta \leq (1/4)q^{-1}|E|$ and $|V(\mcl{H})| \leq |E|-C$, then $E$ contains $\mcl{H}$.
\end{theorem}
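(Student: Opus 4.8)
The plan is to deduce this directly from Theorem \ref{th:jumbledTopMinor} applied to the graph family $\mathcal{G} = \{G_r : r \in R\}$ on the vertex set $\mathbb{F}_q^d$, combined with the jumbledness estimate of Theorem \ref{thm:distgr:ndlamd}. First I would observe that an $R$-distance subdivision $\mathcal{H}$ of a complete graph is, after identifying $R$ with a color set $[t]$ where $t = |R|$, exactly a $[t]$-edge-colored subdivision of $K_\Delta$ (with $\Delta$ the number of branch vertices), and that a set $E \subseteq \mathbb{F}_q^d$ contains $\mathcal{H}$ as a distance graph precisely when the family $\{G_r|_E\}$ contains $\mathcal{H}$ as an edge-colored subgraph. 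However, we want to find $\mathcal{H}$ inside $E$, not inside all of $\mathbb{F}_q^d$; the standard trick here is that a $(p,\beta)$-jumbled graph $G_r$ on $\mathbb{F}_q^d$, restricted to a subset $E$ with $|E|$ not too small, is still jumbled as a graph on vertex set $E$ — indeed inequality \eqref{eq:jumbled} for subsets of $E$ is a special case of inequality \eqref{eq:jumbled} for subsets of $\mathbb{F}_q^d$, so each $G_r|_E$ is $(q^{-1}+O(q^{-(d+1)/2}), 2q^{(d-1)/2})$-jumbled as a graph on $E$. Hence I can run Theorem \ref{th:jumbledTopMinor} with $|V| = |E|$.

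Next I would assemble the parameters. By Proposition \ref{thmcv:gr:fam} and Theorem \ref{thm:distgr:ndlamd}, the family $\mathcal{G}|_E = \{G_r|_E : r \in R\}$ is $(p, \beta)$-jumbled with $p = q^{-1}+O(q^{-(d+1)/2})$ and $\beta = 2|R|^{1/2} q^{(d-1)/2}$, so $s = 2|R|^{1/2}\beta p^{-1} = O(|R| q^{(d+1)/2})$; tracking constants carefully gives $s$ bounded by something like $8|R| q^{(d+1)/2}$ for $q$ large, whence $6sD = 6s \cdot 3 = 18 s \le 72 |R| q^{(d+1)/2} = C$ with $D = 3$. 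The length requirement is $\ell = 2\lceil \log s / \log(D-1)\rceil + 3 = 2\lceil \log_2 s\rceil + 3$; since $s \le 8|R|q^{(d+1)/2} \le q^{(d+3)/2}$ (using $|R| \le q$), we get $\log_2 s \le \tfrac{d+3}{2}\log_2 q$, so $\ell \le (d+3)\log_2 q + 3 \le (d+2)\lceil \log_2 q\rceil + 16$ once $q$ is large enough, matching the stated $\ell$. For $d \ge 3$ one checks $p|E| \ge q^{-1}|E| \ge \Delta + \text{slack}$ automatically once $|E| > C$, so the hypothesis $3 \le \Delta \le (1-c)p|V|$ holds for a suitable fixed $c$ with $8\ell^{-1} < c < 1$; for $d = 2$ the jumbledness is too weak to guarantee high-degree vertices for free, which is exactly why the extra hypothesis $\Delta \le (1/4)q^{-1}|E|$ is imposed — it directly supplies $\Delta \le (1-c)p|E|$. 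Finally, $|V(\mathcal{H})| \le |E| - C \le |E| - 6sD$ is precisely the size condition of Theorem \ref{th:jumbledTopMinor}, and the distance-at-least-$\ell$ condition between branch vertices translates to paths of length at least $\ell$ between branch vertices in the subdivision.

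The main obstacle I anticipate is purely bookkeeping: getting the constant $72$ to come out, and verifying that the $O(q^{-(d+1)/2})$ error in $p$ does not spoil the inequalities for all admissible $q$ (one must either absorb it into the constants for $q$ above some threshold, or handle small $q$ separately — possibly the theorem is implicitly asserted for $q$ sufficiently large, or the constant $72$ is generous enough to cover it). I would also need to be slightly careful that $\Delta^{mon}(\mathcal{H}) \le \Delta$ holds: in a subdivision of $K_\Delta$ every vertex has degree at most $\Delta$, and since monochromatic degree is at most total degree, $\Delta^{mon}(\mathcal{H}) \le \Delta$ for free, so this hypothesis of Theorem \ref{th:jumbledTopMinor} is automatic. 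Modulo these routine checks, the corollary is immediate.
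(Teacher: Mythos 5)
Your overall strategy is exactly the paper's: apply Theorem \ref{th:jumbledTopMinor} with $D=3$ to the family $\{G_r : r\in R\}$ restricted to $E$, using Theorem \ref{thm:distgr:ndlamd} for jumbledness (your observation that jumbledness is inherited by induced subgraphs, which the paper leaves implicit, is correct and worth having). However, there is a concrete parameter error. In Theorem \ref{th:jumbledTopMinor} the quantity $\beta$ is the jumbledness of each \emph{individual} graph $G_i$, and the factor $t^{1/2}$ in $s=2t^{1/2}\beta p^{-1}$ is precisely what accounts for passing to the family via Proposition \ref{thmcv:gr:fam}. You first applied Proposition \ref{thmcv:gr:fam} to get a family jumbledness $2|R|^{1/2}q^{(d-1)/2}$ and then multiplied by $t^{1/2}=|R|^{1/2}$ again, so your $s$ comes out as $O(|R|\,q^{(d+1)/2})$ instead of the correct $s=(1+o(1))\,4|R|^{1/2}q^{(d+1)/2}$. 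This is not just cosmetic: it makes $6sD$ roughly $144|R|\,q^{(d+1)/2}$ rather than the stated $C=72|R|^{1/2}q^{(d+1)/2}$ (your own arithmetic $18s\le 72|R|q^{(d+1)/2}$ with $s\le 8|R|q^{(d+1)/2}$ is also internally inconsistent), and it breaks your verification of the path-length hypothesis: with $s\le q^{(d+3)/2}$ you would need $\ell\ge (d+3)\log_2 q+O(1)$, which for large $q$ exceeds the stated $(d+2)\lceil\log_2 q\rceil+16$. With the correct $s\le 4q^{(d+2)/2}$ both checks go through with room to spare.

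A second gap: your claim that for $d\ge 3$ the degree condition ``holds automatically once $|E|>C$'' is not where the implication comes from, and as stated it is not an argument. The point is that the hypothesis $|V(\mcl{H})|\le|E|$ itself bounds $\Delta$: a subdivision of $K_\Delta$ with all paths of length at least $\ell$ has more than $(1/2)\ell\Delta^2$ vertices, so if $\Delta\ge(1/4)q^{-1}|E|$ then $|V(\mcl{H})|>(1/8)q^{-2}|E|^2>|E|$, using $|E|>72q^{(d+1)/2}\ge 72q^2$ when $d\ge3$; this is a contradiction, so $\Delta\le(1/4)q^{-1}|E|\le(1-c)p|E|$ with $c=1/2$ holds for free. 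Some version of this vertex-count argument is needed, since otherwise the $d\ge3$ case of the theorem is unproved. The remaining checks in your write-up ($\Delta^{mon}(\mcl{H})\le\Delta$, $c>8\ell^{-1}$, the $d=2$ hypothesis supplying the degree bound directly) match the paper and are fine.
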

\begin{proof}
    We apply Theorem \ref{th:jumbledTopMinor} with $D=3$.
    Following the notation, $t = |R|$, $p=q^{-1}+O(q^{-(d+1)/2})$, $\beta = 2 q^{(d-1)/2}$, and $s=(1+o(1))4|R|^{1/2}q^{(d+1)/2}$. We choose $c = 1/2$ and $\ell = (d+2) \lceil \log_2(q) \rceil + 16 > 2 \lceil \log s / \log(D-1) \rceil + 3$.
    Note that $c > 8 \ell^{-1}$, as required.
    
    In order to contain an $R$-distance subdivision $\mathcal{H}$ of $K_\Delta$ with distance at least $\ell$ between each branching vertex, it suffices that 
    \begin{enumerate}
        \item $|V(\mathcal{H})| \leq |E|-6sD = |E|-72|R|^{1/2}q^{(d+1)/2}$, and
        \item $\Delta \leq (1/4)q^{-1}|E|$.
    \end{enumerate}
    The second constraint is implied by the first if $d \geq 3$: since $|E| > 72 q^{(d+1)/2} \geq 72 q^2$, if $\Delta \geq (1/4) q^{-1} |E|$, then $|V(\mcl{H})| > (1/2)\ell \Delta^2 > (1/8)q^{-2}|E|^2 > |E|$.
\end{proof}

In $d=2$ case of Theorem \ref{th:ffApplicationSmalld0}, we could find a subdivision of $K_\Delta$ for $\Delta$ arbitrarily close to the average degree $q^{-1}|E|$ with a minor modification of the proof, by choosing $c$ to be small.
The next theorem shows that, at the cost of increasing the size of $|E|$ by $q^\epsilon$, we can find subdivisions of complete graphs with constant girth.

\begin{theorem}\label{th:ffApplicationLarged0}
     Let $E \subseteq \mathbb{F}_q^d$ and let  $R \subseteq \mathbb{F}_q \setminus \{0\}$.
    Let $0 < \epsilon \leq 1/2$
    and let $\ell = 2\lceil \epsilon^{-1} \rceil + 16$.
    Let $|E| > C = 200 |R|^{1/2+\epsilon} q^{(1+\epsilon)(d+1)/2}$.
    Let $\mathcal{H}$ be an $R$-distance subdivision of a complete graph with distance at least $\ell$ between each pair of branching vertices.
    If $d > 2$ and $|V(\mcl{H})| \leq |E|-C$, then $E$ contains $\mcl{H}$.
    If $d = 2$ and $\Delta \leq q^{-1}|E|/4$ and $|V(\mcl{H})| \leq |E|-C$, then $E$ contains $\mcl{H}$.
\end{theorem}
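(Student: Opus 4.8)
The plan is to derive this from Theorem~\ref{th:jumbledTopMinor} in exactly the way Theorem~\ref{th:ffApplicationSmalld0} is derived from it, but taking $D=1+\lceil s^{\epsilon}\rceil$ in place of $D=3$. Letting $D$ grow like $s^{\epsilon}$ is precisely what turns the path-length requirement of Theorem~\ref{th:jumbledTopMinor} into a constant depending only on $\epsilon$; the price is that the admissible distances along the subdivided paths, and the size threshold $C$, both grow by a factor $q^{\Theta(\epsilon)}$. First I would fix the graph family: take the vertex set to be $E$ itself, and for each $r\in R$ let $G_r$ be the distance-$r$ graph of $\mathbb{F}_q^d$ restricted to $E$. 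Restricting to a subset does not change $e(X,Y)$ for $X,Y\subseteq E$, so Theorem~\ref{thm:distgr:ndlamd} shows each $G_r$ (restricted to $E$) is $(p,\beta)$-jumbled with $p=q^{-1}+O(q^{-(d+1)/2})$ and $\beta=2q^{(d-1)/2}$. Identifying $R$ with $[|R|]$, an embedding of the $[|R|]$-edge-colored graph $\mathcal{H}$ into $\{G_r:r\in R\}$ is exactly a copy of the distance graph $\mathcal{H}$ in $E$. In the notation of Theorem~\ref{th:jumbledTopMinor} we then have $t=|R|$ and $s=2|R|^{1/2}\beta p^{-1}=(1+o(1))\,4|R|^{1/2}q^{(d+1)/2}$, the $|R|^{1/2}$ being the usual family loss underlying Proposition~\ref{thmcv:gr:fam}.

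Next I would verify the hypotheses of Theorem~\ref{th:jumbledTopMinor} with $c=\tfrac12$. Since $\epsilon\le\tfrac12$ we have $\ell=2\lceil\epsilon^{-1}\rceil+16\ge 20$, so $8\ell^{-1}<\tfrac12=c$. Because $s>1$, $D-1=\lceil s^{\epsilon}\rceil\ge 2$, hence $D\ge 3$, and $\log s/\log(D-1)\le\epsilon^{-1}$, so $2\lceil\log s/\log(D-1)\rceil+3\le 2\lceil\epsilon^{-1}\rceil+3\le\ell$; thus the paths of $\mathcal{H}$, of length at least $\ell$, are long enough. For the size conditions, a short computation gives $6sD\le 12s^{1+\epsilon}\le 12\cdot 4^{1+\epsilon}(1+o(1))\,|R|^{(1+\epsilon)/2}q^{(1+\epsilon)(d+1)/2}\le 200\,|R|^{1/2+\epsilon}q^{(1+\epsilon)(d+1)/2}=C$, using $4^{1+\epsilon}\le 8$ and $|R|^{(1+\epsilon)/2}\le|R|^{1/2+\epsilon}$ (and, in the degenerate range $s^{\epsilon}<2$, the cruder bound $6sD=18s\le C$). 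Hence $|V(\mathcal{H})|\le|E|-C\le|E|-6sD$, which is the vertex bound required by Theorem~\ref{th:jumbledTopMinor} since $|V|=|E|$.

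The last hypothesis is $\Delta\le(1-c)p|V|=\tfrac12 p|E|$, where $\Delta$ denotes the number of branch vertices of $\mathcal{H}$; this is where the split between $d=2$ and $d>2$ enters. For $d=2$ it is (a weakening of) the assumed bound $\Delta\le q^{-1}|E|/4\le\tfrac12 p|E|$. For $d>2$ it is automatic, just as in the proof of Theorem~\ref{th:ffApplicationSmalld0}: a subdivision of $K_{\Delta}$ in which every path has length at least $\ell$ has at least $\binom{\Delta}{2}(\ell-1)\ge\tfrac16\ell\Delta^2$ vertices, so if $\Delta>\tfrac12 p|E|\ge\tfrac14 q^{-1}|E|$ then $|V(\mathcal{H})|>\tfrac{\ell}{96}q^{-2}|E|^2>|E|$ because $|E|>C\ge 200q^{(d+1)/2}\ge 200q^{2}$, contradicting $|V(\mathcal{H})|\le|E|-C$. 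With every hypothesis checked, Theorem~\ref{th:jumbledTopMinor} produces the embedding, i.e.\ a copy of $\mathcal{H}$ in $E$. The only genuinely delicate point is the calibration of $D$: it must be at least about $s^{\epsilon}$ for the path-length inequality yet small enough that $6sD\le C$, and this trade-off is what pins down the exponents $\tfrac12+\epsilon$ and $(1+\epsilon)(d+1)/2$ in $C$ — as well as the need to exclude $d=2$, where the vertex count alone does not force $\Delta$ to be small.
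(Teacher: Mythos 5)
Your proposal is correct and follows essentially the same route as the paper, which simply says to repeat the proof of Theorem~\ref{th:ffApplicationSmalld0} with $D=\lfloor s^{\epsilon}\rfloor+1$ (your $D=1+\lceil s^{\epsilon}\rceil$ is an immaterial variant that in fact makes the inequality $\log s/\log(D-1)\le\epsilon^{-1}$ cleaner). You have merely written out the arithmetic ($6sD\le C$, the path-length and $\Delta$ checks) that the paper leaves implicit.
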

\begin{proof}
    Similar to the proof of Theorem \ref{th:ffApplicationSmalld0}, only use $D = \lfloor s^\epsilon \rfloor +1$ where $s=2|R|^{1/2}q^{(d+1)/2}$.
\end{proof}

Theorem \ref{th:ffApplicationSmalld0} implies that we can find a $O(\log q)$-subdivision of $K_D$ that covers most of the vertices of $\mathcal{G}$ using a single distance when $|E| = \Omega(q^{(d+1)/2})$.
The exponent $(d+1)/2$ is the same as required by the theorem of Iosevich and Rudnev \cite{IosRud07} to find a single edge, and this threshold is known to be tight in odd dimensions \cite{iosevich2024quotient}.
If we allow $t$ distances to occur in the subdivision, then we need $|E| = \Omega(t^{1/2}q^{(d+1)/2})$.
The dependence on $t$ in Theorem \ref{th:jumbledTopMinor} is best possible, but we believe that it should be possible to remove the dependence on $|R|$ in Theorem \ref{th:ffApplicationSmalld0}.

\begin{conjecture}
    The conclusion of Theorem \ref{th:ffApplicationSmalld0} holds for $C = O(q^{(d+1)/2})$.
\end{conjecture}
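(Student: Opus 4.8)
The plan is to specialize the proof of Theorem~\ref{th:jumbledTopMinor} to $\mathcal{G}=\{G_r:r\in R\}$, using Theorem~\ref{thm:distgr:ndlamd}, and to remove the factor $|R|^{1/2}$ wherever it occurs. As noted after Theorem~\ref{th:jumbledTopMinor}, that factor enters only through Proposition~\ref{thmcv:gr:fam}: the bound ``$\mathcal{G}$ is $(p,\beta\sqrt{t})$-jumbled'' is used both to place the branch vertices together with their stubs and, via $s=\beta\sqrt{t}\,p^{-1}$, to invoke the Path Connection Lemma~\ref{lem:pathExtension}. But $G_r$ is not a generic $(p,\beta)$-jumbled graph: it is an $|S_r|$-regular Cayley graph on $\mathbb{F}_q^d$ with $|S_r|=q^{d-1}+O(q^{(d-1)/2})$, so one should estimate the jumbledness of the auxiliary graph $B_{\mathcal{G}}$ directly.

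Fourier expansion on $\mathbb{F}_q^d$ gives, for $X\subseteq\mathbb{F}_q^d\times R$ and $Y\subseteq\mathbb{F}_q^d$,
\[
e_{B_{\mathcal{G}}}(X,Y)=q^{-d}\sum_{\xi}\overline{\widehat{1_Y}(\xi)}\sum_{r\in R}\widehat{1_{S_r}}(\xi)\,\widehat{1_{X_r}}(\xi),\qquad X_r:=\{v:(v,r)\in X\}.
\]
The $\xi=0$ term is $(1+O(q^{-(d-1)/2}))q^{-1}|X||Y|$, and $|\widehat{1_{S_r}}(\xi)|\le 2q^{(d-1)/2}$ for $\xi\ne0$ by~\cite{IosRud07}. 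The new point is to avoid bounding the inner sum over $r$ termwise, which is what costs the $|R|^{1/2}$: a routine Gauss-sum computation gives the exact identity $\sum_{r\in\mathbb{F}_q}|\widehat{1_{S_r}}(\xi)|^2=(q-1)q^{d-1}$ for every $\xi\ne0$, so $\sum_{r\in R}|\widehat{1_{S_r}}(\xi)|^2\le(q-1)q^{d-1}$ however large $R$ is. Combining this with the Cauchy--Schwarz inequality (over $r$, then over $\xi$) and Parseval's identity gives $\bigl|e_{B_{\mathcal{G}}}(X,Y)-q^{-1}|X|\,|Y|\bigr|\le C q^{d/2}|X|^{1/2}|Y|^{1/2}$ for an absolute constant $C$; that is, $\mathcal{G}$ is $\bigl(q^{-1}(1+o(1)),\,O(q^{d/2})\bigr)$-jumbled with no dependence on $|R|$. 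Feeding this into the proof of Theorem~\ref{th:jumbledTopMinor} replaces $s\approx 4|R|^{1/2}q^{(d+1)/2}$ by $s=O(q^{(d+2)/2})$, which already proves the conjecture with the exponent $(d+1)/2$ weakened to $(d+2)/2$ (in particular, outright whenever $|R|=O(q)$).

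The obstacle is the surviving factor $q^{1/2}$. To reach $C=O(q^{(d+1)/2})$ one wants $B_{\mathcal{G}}$ to be as jumbled as each $G_r$, namely $\bigl(q^{-1},O(q^{(d-1)/2})\bigr)$-jumbled. The evident obstruction — pairs $v,y$ with $v-y$ on the light cone $S_0$, which are non-adjacent in every $G_r$ with $r\ne0$ — produces a deviation of only $O(q^{d/2-1})$ (take $X=W\times R$ and $Y=W$ for a maximal totally isotropic subspace $W$), which is below $q^{(d-1)/2}$; so it is plausible that $B_{\mathcal{G}}$ \emph{is} $\bigl(q^{-1},O(q^{(d-1)/2})\bigr)$-jumbled, and the Fourier bound above is lossy by $q^{1/2}$ only because it ignores that a $0/1$-valued $X$ cannot concentrate its Fourier mass along a single direction for many radii at once; closing this gap looks like it requires an ``uncertainty principle''-type input. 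The route I would try first instead: since the Path Connection Lemma~\ref{lem:pathExtension} only needs $\mathcal{G}$ to be $s$-joined, it suffices to prove the uniform pinned-distance statement ``$|E|\ge C_0 q^{(d+1)/2}$ implies $\{\|v-y\|:y\in E\}=\mathbb{F}_q$ for \emph{every} $v\in\mathbb{F}_q^d$'', for then any nonempty $X$ together with any $Y$ of size at least $C_0 q^{(d+1)/2}$ spans an edge of $B_{\mathcal{G}}$. The branch-vertex step should follow from the analogous estimate with multiplicity, $\#\{y\in E:\|v-y\|=r\}=(1+o(1))q^{-1}|E|$ for all $v$ and all $r\ne0$, combined with a defect form of Hall's theorem to route the $\Theta(\Delta^2)$ stubs without collisions. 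For $d=2$ these pinned-distance facts hold (the Wolff exponent $4/3<3/2$ leaves room to spare); for $d\ge3$, establishing them at the sharp threshold $q^{(d+1)/2}$ — with any absolute constant, which the conjecture permits — is, I expect, the heart of the matter.
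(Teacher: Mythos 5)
This statement is a \emph{conjecture}: the paper offers no proof of it, so there is nothing on the paper's side to compare against, and your submission is, by its own admission, not a proof either. The one concrete piece of progress you claim is the second-moment-in-the-radius estimate: using $\sum_{r\in\mathbb{F}_q}|\widehat{1_{S_r}}(\xi)|^2=(q-1)q^{d-1}$ together with Cauchy--Schwarz and Parseval to show that $B_{\mathcal{G}}$ is $\bigl(q^{-1}(1+o(1)),O(q^{d/2})\bigr)$-bijumbled with no dependence on $|R|$. That computation checks out and is a nice observation. But your parenthetical conclusion is wrong: $s=O(q^{(d+2)/2})$ does not prove the conjecture ``outright whenever $|R|=O(q)$'' --- the condition $|R|\le q-1$ holds always, and $q^{(d+2)/2}$ is not $O(q^{(d+1)/2})$. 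Worse, since $|R|^{1/2}q^{(d+1)/2}\le q^{(d+2)/2}$ for every admissible $R$, your bound never actually improves on the constant $C=72|R|^{1/2}q^{(d+1)/2}$ already in Theorem~\ref{th:ffApplicationSmalld0}; it merely matches it in the extreme case $|R|=\Theta(q)$.

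The route you propose for closing the remaining $q^{1/2}$ gap rests on a false lemma when $d\ge 3$. The uniform pinned-distance statement ``$|Y|\ge C_0q^{(d+1)/2}$ implies $\{\|v-y\|:y\in Y\}=\mathbb{F}_q$ for every $v$'' fails: take $Y$ to be (a large subset of) the isotropic cone $\{y:\|y-v\|=0\}\setminus\{v\}$, which has $\sim q^{d-1}\ge q^{(d+1)/2}$ points for $d\ge 3$, so that $v$ realizes only the distance $0\notin R$ to $Y$ and the pair $X=\{(v,r)\}$, $Y$ spans no edge of $B_{\mathcal{G}}$. This does not contradict $s$-joinedness (which also requires $|X|\ge s$), but it means your reduction to the $|X|=1$ case overshoots the target and cannot work as stated; any proof of the conjecture must exploit that $X$ is large. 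The branch-vertex step via ``a defect form of Hall's theorem'' is likewise only a sketch. In short: the conjecture remains open, your Fourier step is correct but yields no quantitative gain over the paper, and the proposed path to the sharp exponent contains a genuine error.
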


Similar considerations apply for Theorem \ref{th:ffApplicationLarged0}.



\section{Colorful roll-back method}\label{sec:roll-back}

In the following discussion, we use $G=(V,E)$ to represent the host graph with $n$ vertices, and $H$ to denote the graph (usually a tree) with $k$ vertices that we aim to embed. For a vertex subset $X\subseteq V(G)$, the set of \emph{neighbors} of $X$ is denoted by $\Gamma_G(X):=\{v'\in V: \exists\ v\in X, v\sim v'\}$, and the set of \emph{external neighbors} of $X$ is denoted by $N_G(X):=\Gamma_G(X)\backslash X$. 
We assume that $H$ has maximum degree at most $D$, i.e., $\Delta(H)\leq D$. Let $H+wu$ be the graph obtained from $H$ by attaching a new vertex $u$ to a vertex $w\in V(H)$ with $\mathrm{deg}_H(w)<D$. It is evident that $H+wu$ also satisfies $\Delta(H+wu)\leq D$.

If we aim to extend the embedding $\phi:H\hookrightarrow G$ to an embedding of $H+wu$, it is clear that $\phi(w)$ must has at least one neighbor available to embed $u$, i.e., $|\Gamma_G(\phi(w))\backslash \phi(H)| \geq 1$. Furthermore, if we want extend $\phi$ to an embedding of the graph obtained by iteratively attaching new vertices to vertices in $H$, until every vertex in $H$ has degree $D$, one necessary condition should be,
\begin{equation}\label{eqn:FP}
    \bigl|\Gamma_{G}(X)\backslash\phi(H)\bigr|\geq 
            \sum_{v\in X}\bigl[D-\mathrm{deg}_{H}\bigl(\phi^{-1}(v)\bigr)\bigr], 
\end{equation}
for every $X\subseteq V(G)$. Here we define $\mathrm{deg}_{H}(\emptyset) := 0$. Thus $\mathrm{deg}_{H}\bigl(\phi^{-1}(v)\bigr)=0$ for $v\in V(G)\backslash \phi(H)$. 

Inequality $(\ref{eqn:FP})$ was used by Friedman and Pippenger \cite{FriedPipp87} to define a good embedding. To make it compatible with the roll-back method, a stronger assumption is required. Following Dragani\'{c}, Krivelevich and Nenadov \cite{DragKrivNen22}, an embedding $\phi:H\hookrightarrow G$ is \emph{$(s, D)$-good} if
    \[
        \bigl|\Gamma_{G}(X)\backslash\phi(H)\bigr|\geq 
            \sum_{v\in X}\bigl[D-\mathrm{deg}_{H}\bigl(\phi^{-1}(v)\bigr)\bigr]
            +|\phi(H)\cap X|,
    \]
for every $X\subseteq V(G)$ of size $|X|\leq s$. In an unpublished manuscript (see \cite{Glebov13}, Chapter 5), Glebov, Johannsen, and Krivelevich \cite{GlebJohKriv} initially employed this concept in a slightly different form, where an $(2s, D-1)$-good embedding is referred as \emph{$(D,s)$-extendable}. The subsequent result, the original \emph{Removal Lemma}, is a straightforward corollary of the definition of $(s,D)$-good, and it proves to be a potent method for connecting vertices in expanding graphs. The version used here is derived from \cite{DragKrivNen22} (Lemma 2.4), see also \cite{Glebov13} (Lemma 5.2.7) for additional context.

\begin{theorem}[\cite{DragKrivNen22,Glebov13}]\label{lem:roll:back:b}
    Suppose we are given graphs $G$ and $H'$ with $\Delta(H') \leq D$, and an $(s, D)$-good embedding $\phi': H' \hookrightarrow G$, for some $s,D \in \mathbb{N}$. Then for every graph $H$ obtained from $H'$ by successively removing a vertex of degree $1$, the restriction $\phi$ of $\phi'$ to $H$ is also $(s,D)$-good.
\end{theorem}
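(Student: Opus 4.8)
The plan is to peel off one leaf at a time. By a straightforward induction on the number of deleted vertices, it suffices to prove the following one-step claim: if $\phi':H'\hookrightarrow G$ is an $(s,D)$-good embedding, $u$ is a vertex of degree $1$ in $H'$ with unique neighbour $w$, and $H=H'-u$, then $\phi:=\phi'|_{H}$ is $(s,D)$-good. Indeed, writing a chain $H'=H_0\supsetneq H_1\supsetneq\cdots\supsetneq H_k=H$ in which each $H_i$ is obtained from $H_{i-1}$ by deleting a vertex of degree $1$ in $H_{i-1}$, the one-step claim applied to $\phi'$ and the first deletion shows $\phi'|_{H_1}$ is $(s,D)$-good (and $\Delta(H_1)\le\Delta(H')\le D$), after which the inductive hypothesis applied to $\phi'|_{H_1}:H_1\hookrightarrow G$ delivers the general statement. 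So I only need the one-step claim.

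Fix $X\subseteq V(G)$ with $|X|\le s$, and set $a=\phi'(u)$ and $b=\phi'(w)$. The crucial structural fact is that $ab\in E(G)$, because $uw\in E(H')$ and $\phi'$ is an embedding; nothing else about the embedding will be used. I would then express both sides of the defining inequality for $\phi$ in terms of the corresponding quantities for $\phi'$. Since $a\in\phi'(H')$ and $\phi(H)=\phi'(H')\setminus\{a\}$, on the left-hand side
\[
\bigl|\Gamma_G(X)\setminus\phi(H)\bigr| = \bigl|\Gamma_G(X)\setminus\phi'(H')\bigr| + \mathbbm{1}[a\in\Gamma_G(X)].
\]
On the right-hand side the summand $D-\deg(\cdot)$ changes only at $v=b$, where $\deg_H(w)=\deg_{H'}(w)-1$, and at $v=a$, where now $\phi^{-1}(a)=\emptyset$ while $\deg_{H'}(u)=1$; meanwhile $|\phi(H)\cap X|=|\phi'(H')\cap X|-\mathbbm{1}[a\in X]$. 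Adding these contributions, the two occurrences of $\mathbbm{1}[a\in X]$ cancel, leaving
\[
\sum_{v\in X}\!\bigl(D-\deg_H(\phi^{-1}(v))\bigr)+|\phi(H)\cap X| = \sum_{v\in X}\!\bigl(D-\deg_{H'}(\phi'^{-1}(v))\bigr)+|\phi'(H')\cap X|+\mathbbm{1}[b\in X].
\]

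Comparing the two displays, the inequality we want for $\phi$ follows from the one we already have for $\phi'$ as soon as $\mathbbm{1}[a\in\Gamma_G(X)]\ge\mathbbm{1}[b\in X]$, and this is immediate from $ab\in E(G)$: if $b\in X$ then $a\in\Gamma_G(X)$. I do not anticipate a genuine obstacle here — the argument is pure bookkeeping, and the only point requiring care is checking that the reappearance of $a$ as an available vertex is exactly accounted for. The role of the extra term $+|\phi(H)\cap X|$ in the definition of an $(s,D)$-good embedding, beyond Friedman and Pippenger's condition (\ref{eqn:FP}), is precisely to supply the one unit of slack that gets consumed when $a=\phi'(u)$ reverts from an occupied vertex to a free neighbour of $b=\phi'(w)$.
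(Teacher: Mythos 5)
Your proof is correct and follows essentially the same route as the paper: the paper cites this theorem and proves only its colorful analogue (Lemma \ref{lemcv:roll:back} via Proposition \ref{prop:RXphi:minus}), but that proof is exactly your one-step bookkeeping identity together with the observation that $\phi'(w)\in X$ forces $\phi'(u)\in\Gamma_G(X)$ because $uw$ is an edge of $H'$. Your closing remark correctly identifies the role of the extra $|\phi(H)\cap X|$ term as the unit of slack consumed when the deleted image reverts to a free vertex.
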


The original roll-back method is only applicable for the removal of vertices with degrees exactly $1$. It does not apply to vertices with degrees at least $2$ or exactly $0$. Consequently, there will always be vertices that cannot be removed. These vertices are pivotal when extending this method to graph families. During the embedding process of a graph $H$, it is natural that its vertices are partitioned to embedded vertices and unembedded vertices.
We will label some of the embedded vertices as \emph{roots}, and these verticies will remain unaffected (mostly) by further embedding operations. Additionally, each embedded non-root vertex has the ability that it can be removed (to be unembedded) by successively removing (unembedding) a non-root vertex of degree $1$. In other words, there is an unique path from each embedded non-root vertex to the set of roots. The neighbor on this path of an embedded non-root vertex, is called the \emph{parent} of it. For example, when a forest is embedded, there should be at least one root, and can be several, in each of its components. Moreover, those roots in each component should induce a subtree of the embedded forest. One benifit of this labelling technique is that, it will allow removal operations of root vertices with degrees $0$, apart of non-root vertices with degrees $1$.

Let $\mcl{G}=\{G_1,\ldots,G_t\}$ be a graph family on the same set $V$, and let $\mcl{H}$ be a $[t]$-edge-colored graph with maximum degree $\Delta^{mon}(\mcl{H})\leq D$. For a subset $X\subseteq V\times [t]$, the set of \emph{neighbors} of $X$ in $\mcl{G}$ is denoted by $\Gamma_{\mcl{G}}(X):=\bigcup_{(v,i)\in X}\Gamma_{G_i}(v)$.
Note that $\Gamma_{\mcl{G}}(X)\subseteq V$. This definition was introduced by Chakraborti and Lund \cite{ChakLund24}. When look at the auxiliary bipartite graph $B_\mcl{G}$ associated to $\mcl{G}$, it is easy to see that $\Gamma_{\mcl{G}}(X)$ is the set of neighbors of $X$ in $B_{\mcl{G}}$. The following definition is the core of our approach.

\begin{definition}
    Given an embedding $\phi:\mcl{H}\hookrightarrow \mcl{G}$ from a $[t]$-edge-colored rooted graph $\mcl{H}$ into a graph family $\mcl{G}=\{G_1,\ldots,G_t\}$. For a subset $X\subseteq V(\mcl{G})\times [t]$, let
    \[
        R(X,\phi):=\bigl|\Gamma_{\mcl{G}}(X)\backslash\phi(\mcl{H})\bigr|
            -\sum_{(v,i)\in X}\bigl[D-\mathrm{deg}_{H_i}\bigl(\phi^{-1}(v)\bigr)\bigr]
            -|P_\phi(\mcl{H})\cap X|,
    \]
    in which $P_\phi(\mcl{H})\subseteq V(\mcl{G})\times [t]$ is the set of pairs $(\phi(h),i)$ where $h$ is a non-root vertex in $V(\mcl{H})$ and $i$ is the color of the edge from $h$ to its parent in $\mcl{H}$. The embedding $\phi:\mcl{H}\hookrightarrow \mcl{G}$ is said to be \emph{$(s,D)$-good} if $R(X,\phi)\geq 0$ for every $X\subseteq V(\mcl{G})\times [t]$ with $|X|\leq s$.
\end{definition}

One direct implication of this definition is that given an $(s,D)$-good embedding of a rooted graph $\mcl{H}$, if non-root vertex were relabled as roots, the embedding remains $(s,D)$-good with an updated $|P_{\phi}(\mcl{H})\cap X|$, since $|P_{\phi}(\mcl{H})\cap X|$ will either decrease or remain the same. To maintain consistency of the definition of $P_\phi(\mcl{H})$, when a non-root vertex is relabled as root, all vertices (its \emph{ancestors}) on the path from it to the set of roots will be relabeled as roots. Also, removing root vertices with degrees $0$ from an $(s,D)$-good embedding does not affect its goodness, since $|\phi(\mcl{H})|$ decreases.

With this established definition of good embedding, the remaining discussion about the colorful roll-back method follows standard procedures, which was initially introduced by Friedman and Pippenger \cite{FriedPipp87}, further developed by Haxell \cite{Haxell01}, and expanded upon by Glebov, Johannsen and Krivelevich \cite{GlebJohKriv}.

Let $\mcl{H}$ be defined as above. The graph $\mcl{H}+wu$ obtained by attaching a new vertex $u$ with an edge colored by $r\in [t]$, to a vertex $w\in V(\mcl{H})$ with $\mathrm{deg}_{H_{r}}(w)<D$, is called an \emph{extension} of $\mcl{H}$. We will say $\mcl{H}+wu$ is an extension of $\mcl{H}$ with $w\stackrel{r}{\sim}u$ for convenience. Given an embedding $\phi:\mcl{H}\hookrightarrow \mcl{G}$, the embedding $\phi_a:\mcl{H}+wu\hookrightarrow \mcl{G}$ is called an \emph{extension} of $\phi$ if,
\[
    \text{$\phi_a(u)=a$, $a\in \Gamma_{G_r}(\phi(w))\backslash \phi(\mcl{H})$, and $\phi_a(v)=\phi(v)$, $\forall v\in V(\mcl{H})$.}
\]

The following proposition about the function $R(X,\phi)$ plays a vital role in our calculation. 

\begin{proposition}\label{prop:RXphi:minus}
    Let $\mcl{H}+wu$ be an extension of $\mcl{H}$ with $w\stackrel{r}{\sim}u$, and
    let $\phi_a:\mcl{H}+wu\hookrightarrow \mcl{G}$ be an extension of $\phi:\mcl{H}\hookrightarrow \mcl{G}$. Then
    \[
        R(X,\phi_a)-R(X,\phi)=\mathbbm{1}\bigl[ (\phi(w),r)\in X\bigr]-\mathbbm{1}\bigl[ a\in \Gamma_{\mcl{G}}(X)\bigr],
    \]
    for every $X\subseteq V(\mcl{G})\times [t]$.
\end{proposition}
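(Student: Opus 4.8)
The plan is to track how each of the three terms in the definition of $R(X,\phi)$ changes when we pass from $\phi$ to its extension $\phi_a$. Recall
\[
    R(X,\phi)=\bigl|\Gamma_{\mcl{G}}(X)\backslash\phi(\mcl{H})\bigr|
        -\sum_{(v,i)\in X}\bigl[D-\mathrm{deg}_{H_i}\bigl(\phi^{-1}(v)\bigr)\bigr]
        -|P_\phi(\mcl{H})\cap X|,
\]
and that $\phi_a$ agrees with $\phi$ except that $\phi_a(u)=a\notin\phi(\mcl{H})$, so $\phi_a(\mcl{H}+wu)=\phi(\mcl{H})\cup\{a\}$. First I would analyze the neighborhood term. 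Since $\Gamma_{\mcl{G}}(X)$ depends only on $X$ and $\mcl{G}$, not on the embedding, we have $\Gamma_{\mcl{G}}(X)$ unchanged; the only change in $\bigl|\Gamma_{\mcl{G}}(X)\backslash\phi(\mcl{H})\bigr|$ comes from removing the single new point $a$ from the ambient set, so this term decreases by exactly $\mathbbm{1}[a\in\Gamma_{\mcl{G}}(X)]$.

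Next I would handle the degree-deficiency sum. For a pair $(v,i)\in X$, the quantity $D-\mathrm{deg}_{H_i}(\phi^{-1}(v))$ changes only if $\phi^{-1}(v)$ gains an $i$-colored edge, and under the extension the only vertex gaining an edge is $w$, which gains exactly one edge of color $r$ (to $u$). Hence $\mathrm{deg}_{(H+wu)_i}(\phi_a^{-1}(v))=\mathrm{deg}_{H_i}(\phi^{-1}(v))$ unless $(v,i)=(\phi(w),r)$, in which case it increases by $1$, so $D-\mathrm{deg}$ decreases by $1$; note also $\phi_a^{-1}(a)=u$, but $a$ could only contribute if $(a,i)\in X$ for some $i$, and $\mathrm{deg}_{(H+wu)_i}(u)$ is $1$ if $i=r$ and $0$ otherwise, which equals $D-\mathrm{deg}$... here I must be slightly careful: for $v=a$ the term $D-\mathrm{deg}_{(H+wu)_i}(a)$ under $\phi_a$ replaces the term $D-\mathrm{deg}_{H_i}(\phi^{-1}(a))=D-0=D$ that appeared under $\phi$ (since $a\notin\phi(\mcl{H})$, so $\phi^{-1}(a)=\emptyset$ and $\mathrm{deg}_H(\emptyset)=0$). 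But $u$ has $\mathrm{deg}_{(H+wu)_i}(u)\le 1$, so this term changes by at most $1$ in absolute value for $(a,i)\in X$. To avoid this bookkeeping nuisance I would instead observe that the correct comparison uses $D-\mathrm{deg}$ which for $v=a$ goes from $D$ down to $D-1$ (if $i=r$) — an apparent discrepancy. The cleaner route: note $\sum_{(v,i)\in X}\mathrm{deg}_{H_i}(\phi^{-1}(v))$ counts, for each $(v,i)\in X$, the $i$-colored edges at $\phi^{-1}(v)$; passing to $H+wu$, the single new edge $wu$ of color $r$ contributes to this count once for the endpoint pair $(\phi(w),r)$ if it lies in $X$ and once for $(a,r)$ if that lies in $X$. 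Meanwhile the $|X|\cdot D$ part is unchanged. So the whole sum $\sum_{(v,i)\in X}[D-\mathrm{deg}]$ changes by $-\mathbbm{1}[(\phi(w),r)\in X]-\mathbbm{1}[(a,r)\in X]$, giving a contribution $+\mathbbm{1}[(\phi(w),r)\in X]+\mathbbm{1}[(a,r)\in X]$ to $R(X,\phi_a)-R(X,\phi)$ (the sum enters $R$ with a minus sign).

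Finally I would treat $|P_\phi(\mcl{H})\cap X|$. In $\mcl{H}+wu$, the vertex $u$ is a new non-root leaf whose parent is $w$ via the $r$-colored edge, so $P_{\phi_a}(\mcl{H}+wu)=P_\phi(\mcl{H})\cup\{(a,r)\}$ (recall $P$ records the pair $(\phi(h),i)$ where $i$ is the color of the edge from non-root $h$ to its parent, so the new entry is $(\phi_a(u),r)=(a,r)$, not $(\phi(w),r)$), and all previously existing parent relationships are untouched. Hence $|P_{\phi_a}(\mcl{H}+wu)\cap X|=|P_\phi(\mcl{H})\cap X|+\mathbbm{1}[(a,r)\in X]$, contributing $-\mathbbm{1}[(a,r)\in X]$ to the difference. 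Summing the three contributions: $-\mathbbm{1}[a\in\Gamma_{\mcl{G}}(X)]+\mathbbm{1}[(\phi(w),r)\in X]+\mathbbm{1}[(a,r)\in X]-\mathbbm{1}[(a,r)\in X]=\mathbbm{1}[(\phi(w),r)\in X]-\mathbbm{1}[a\in\Gamma_{\mcl{G}}(X)]$, as claimed. The main obstacle is purely bookkeeping: being careful about the two endpoints of the new edge in the degree sum and, most subtly, checking that the new $P$-entry is indexed by $a$ (the child) rather than by $\phi(w)$, since that is exactly the term that cancels the spurious $(a,r)$ contribution from the degree sum and leaves the asymmetric final formula.
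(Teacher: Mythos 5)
Your proposal is correct and follows essentially the same route as the paper: a term-by-term accounting of the three components of $R$, yielding contributions $-\mathbbm{1}[a\in\Gamma_{\mcl{G}}(X)]$ from the neighborhood term, $+\mathbbm{1}[(\phi(w),r)\in X]+\mathbbm{1}[(a,r)\in X]$ from the degree-deficiency sum, and $-\mathbbm{1}[(a,r)\in X]$ from the $P_\phi$ term. You correctly identify the one subtle point — that the new entry of $P_{\phi_a}$ is $(a,r)$ rather than $(\phi(w),r)$, which is exactly what cancels the $(a,r)$ contribution from the degree sum — so nothing is missing.
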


\begin{proof}
    Denote $\mcl{H}' = \mcl{H}+wu$.
    We have:
    \begin{eqnarray*}
        \bigl|\Gamma_{\mcl{G}}(X)\backslash\phi_a(\mcl{H}')\bigr|-\bigl|\Gamma_{\mcl{G}}(X)\backslash\phi(\mcl{H})\bigr|&=&-\mathbbm{1}\bigl[ a\in \Gamma_{\mcl{G}}(X)\bigr],\\ 
        -|P_{\phi_a}(\mcl{H}')\cap X|+|P_\phi(\mcl{H})\cap X| &=& -\mathbbm{1}\bigl[ (a,r)\in X\bigr],
    \end{eqnarray*}
    and
    \[
        -\sum_{(v,i)\in X}\bigl[D-\mathrm{deg}_{H'_i}\bigl(\phi^{-1}_a(v)\bigr)\bigr]
        +\sum_{(v,i)\in X}\bigl[D-\mathrm{deg}_{H_i}\bigl(\phi^{-1}(v)\bigr)\bigr]
        = \mathbbm{1}\bigl[ (\phi(w),r)\in X\bigr]+\mathbbm{1}\bigl[ (a,r)\in X\bigr].
    \]
\end{proof}

The original roll-back method comprises several extension lemmas, along with Theorem \ref{lem:roll:back:b}. In line with Montgomery’s discussion of these results \cite{Mont19}, we will explore their graph family versions, given our interest in embedding graphs into $s$-joined graph families. First, we establish the removal lemma for graph families, which is to say, the colorful version of Theorem \ref{lem:roll:back:b}. 

\begin{lemma}[Removal Lemma]\label{lemcv:roll:back}
    Given a graph family $\mcl{G}$ and rooted $\mcl{H}'$ with $\Delta^{mon}(\mcl{H}')\leq D$, and an $(s, D)$-good embedding $\phi': \mcl{H}' \hookrightarrow \mcl{G}$, for some $s,D \in \mathbb{N}$. Then for every graph $\mcl{H}$ obtained from $\mcl{H}'$ by successively removing a non-root vertex of degree $1$, the restriction $\phi$ of $\phi'$ to $\mcl{H}$ is also $(s,D)$-good.
\end{lemma}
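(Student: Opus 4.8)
The plan is to reduce the statement to a single step by induction: it suffices to show that if $\phi':\mcl{H}'\hookrightarrow\mcl{G}$ is $(s,D)$-good and $u$ is a non-root vertex of degree $1$ in $\mcl{H}'$, then the restriction $\phi$ of $\phi'$ to $\mcl{H}:=\mcl{H}'-u$ is again $(s,D)$-good; the general case follows by iterating. Note that $\mcl{H}$ inherits a valid rooting from $\mcl{H}'$ (removing a leaf that is not a root does not disconnect any non-root vertex from the set of roots, and every surviving non-root vertex keeps the same parent), and clearly $\Delta^{mon}(\mcl{H})\le\Delta^{mon}(\mcl{H}')\le D$, so the only thing to check is that $R(X,\phi)\ge 0$ for every $X\subseteq V(\mcl{G})\times[t]$ with $|X|\le s$.

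The key observation is that $\mcl{H}'$ is precisely an extension of $\mcl{H}$: writing $w$ for the unique neighbor of $u$ in $\mcl{H}'$ and $r$ for the color of the edge $wu$, we have $\mcl{H}'=\mcl{H}+wu$ with $w\stackrel{r}{\sim}u$, and $\phi'$ is an extension of $\phi$ with $\phi'(u)=a$ for $a:=\phi'(u)\in\Gamma_{G_r}(\phi(w))\setminus\phi(\mcl{H})$. Hence Proposition \ref{prop:RXphi:minus} applies and gives, for every $X$,
\[
    R(X,\phi)=R(X,\phi')-\mathbbm{1}\bigl[(\phi(w),r)\in X\bigr]+\mathbbm{1}\bigl[a\in\Gamma_{\mcl{G}}(X)\bigr].
\]
So the dangerous case is when $(\phi(w),r)\in X$ but $a\notin\Gamma_{\mcl{G}}(X)$, in which case $R(X,\phi)=R(X,\phi')-1$, and a priori we could have $R(X,\phi')=0$, making $R(X,\phi)=-1$.

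The main obstacle, therefore, is to rule out this bad case, and the idea is that it simply cannot happen: if $(\phi(w),r)\in X$, then $a\in\Gamma_{G_r}(\phi(w))\subseteq\Gamma_{\mcl{G}}(X)$ by the definition of $\Gamma_{\mcl{G}}$, since $a$ is a neighbor of $\phi(w)$ in $G_r$. Thus whenever the indicator $\mathbbm{1}[(\phi(w),r)\in X]$ equals $1$, the indicator $\mathbbm{1}[a\in\Gamma_{\mcl{G}}(X)]$ also equals $1$, so the net change $R(X,\phi)-R(X,\phi')$ is always $\ge 0$. Combined with $R(X,\phi')\ge 0$ (which holds since $|X|\le s$ and $\phi'$ is $(s,D)$-good), we get $R(X,\phi)\ge 0$ for all such $X$, i.e.\ $\phi$ is $(s,D)$-good. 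Iterating over the successive removals completes the proof; at each stage the intermediate graph still has maximum monochromatic degree at most $D$ and a consistent rooting, so the single-step argument applies unchanged.
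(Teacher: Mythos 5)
Your proposal is correct and follows essentially the same route as the paper: view a single leaf removal as the inverse of an extension, apply Proposition \ref{prop:RXphi:minus}, and observe that $(\phi(w),r)\in X$ forces $a\in\Gamma_{\mcl{G}}(X)$ because $a=\phi'(u)$ is a $G_r$-neighbor of $\phi(w)$. Your added remarks on the inherited rooting and degree bound are a minor (and harmless) elaboration of what the paper leaves implicit.
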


\begin{proof}
    W.l.o.g., we assume $\mcl{H}'=\mcl{H}+wu$ where $w\in V(\mcl{H})$, $u\not\in V(\mcl{H})$, $wu$ has color $r$, and $\phi'(u)=a$. The conclusion follows by repeating this process. For every $X\subseteq V\times [t]$ with $|X|\leq s$, we need to show that $R(X,\phi)\geq 0$. It follows from Proposition \ref{prop:RXphi:minus} that
    \[
        R(X,\phi)=R(X,\phi')-\mathbbm{1}\bigl[ (\phi(w),r)\in X\bigr]+\mathbbm{1}\bigl[ a\in \Gamma_{\mcl{G}}(X)\bigr].
    \]
    It is trivial when $(\phi(w),r)\not\in X$ since $R(X,\phi')\geq 0$. When $(\phi(w),r)\in X$, we have $a\in \Gamma_{\mcl{G}}(X)$ since $wu$ is in $H'_r$.
\end{proof}

The first extension lemma is a straight consequence of the definition of good embedding.

\begin{lemma}[Edge Connection Lemma]
    Let $\phi:\mcl{H}\hookrightarrow \mcl{G}$ be an $(s,D)$-good embedding of an edge-colored rooted graph $\mcl{H}$. Suppose $h,h'$ are two non-adjacent vertices in $\mcl{H}$ but $\phi(h)\stackrel{r}{\sim}\phi(h')$ in $\mcl{G}$. Let $\mcl{H'}$ be the rooted graph obtained from $\mcl{H}$ by adding an $r$-edge-colored edge between $h$ and $h'$, and relabel them (and their ancesters) as roots if they are not. Then $\phi:\mcl{H'}\hookrightarrow \mcl{G}$ is also $(s,D)$-good.
\end{lemma}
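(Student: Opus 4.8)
The plan is to reduce this directly to the definition of $(s,D)$-good, using Proposition~\ref{prop:RXphi:minus} as the bookkeeping tool even though no new vertex is embedded. Write $\mcl{H}'$ for the rooted graph obtained from $\mcl{H}$ by adding the $r$-colored edge $hh'$ and relabeling $h,h'$ and all their ancestors as roots. The vertex map $\phi$ is unchanged, so $\phi(\mcl{H}') = \phi(\mcl{H})$ and $\Gamma_{\mcl{G}}(X)\setminus\phi(\mcl{H}')=\Gamma_{\mcl{G}}(X)\setminus\phi(\mcl{H})$ for every $X$. Fix $X\subseteq V(\mcl{G})\times[t]$ with $|X|\le s$; I must show $R(X,\phi')\ge 0$, where $\phi'$ denotes the embedding of the new rooted graph $\mcl{H}'$ and $R(\cdot,\phi')$ uses the degree data and root data of $\mcl{H}'$.

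The key observation is that $R(X,\phi')$ differs from $R(X,\phi)$ (the quantity for the old rooted graph $\mcl{H}$) in only two controlled ways. First, adding the edge $hh'$ increases $\mathrm{deg}_{H_r}$ at both $\phi(h)$ and $\phi(h')$ by one, which \emph{decreases} the sum $\sum_{(v,i)\in X}[D-\mathrm{deg}_{H_i}(\phi^{-1}(v))]$ by exactly $\mathbbm{1}[(\phi(h),r)\in X]+\mathbbm{1}[(\phi(h'),r)\in X]$; since this term is subtracted in the definition of $R$, this change can only \emph{increase} $R(X,\phi')$ relative to $R(X,\phi)$. Second, relabeling $h,h'$ and their ancestors as roots can only \emph{shrink} the set $P_{\phi'}(\mcl{H}')$ relative to $P_{\phi}(\mcl{H})$, so $|P_{\phi'}(\mcl{H}')\cap X|\le |P_\phi(\mcl{H})\cap X|$; again, since this term is subtracted, the change can only increase $R$. (One must check that relabeling ancestors keeps $P_{\phi'}$ well-defined, which is exactly the consistency condition already noted in the text after the definition of $(s,D)$-good.) Hence $R(X,\phi')\ge R(X,\phi)\ge 0$ by hypothesis, and we are done.

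The only subtlety — and the step I'd expect to need the most care — is checking that $\mcl{H}'$ is a legitimate rooted graph in the sense required: after relabeling, the roots of each component still induce a subtree, and every non-root vertex still has a unique path to the root set with a well-defined parent. Adding the edge $hh'$ could create a cycle through previously unembedded-in-the-tree-structure vertices, but relabeling $h,h'$ and all their ancestors as roots absorbs this cycle into the (now enlarged) root set, so the tree structure on non-root vertices is preserved. This is the same maneuver used to justify root relabeling in the paragraph following the definition, so it can be invoked rather than re-proved; the actual inequality $R(X,\phi')\ge 0$ is then immediate.
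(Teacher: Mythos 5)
Your proof is correct and is precisely the direct verification that the paper omits (it states the lemma as ``a straight consequence of the definition of good embedding''): the first term of $R(X,\phi)$ is unchanged since no new vertex is embedded, the degree sum can only decrease, and $|P_{\phi}(\cdot)\cap X|$ can only decrease under root relabeling, so $R(X,\phi')\ge R(X,\phi)\ge 0$. Your attention to the well-definedness of the rooted structure after relabeling matches the consistency convention the paper sets out right after the definition, so nothing further is needed.
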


The next extension result adopts the fundamental technique proposed by Friedman and Pippenger, which states that, if a graph exhibits a good expansion property, then a good embedding can be extended to include one additional vertex. This is the colorful version of the corresponding result in \cite{Glebov13} (Lemma 5.2.6). A similar extension lemma, which does not permit roll-back, was proved by Chakraborti and Lund (\cite{ChakLund24}, Lemma 14). They showed that such an extension lemma implies a colorful version of Haxell's theorem (\cite{ChakLund24}, Theorem 6).

\begin{lemma}\label{lemcv:vtx:exten}
    Let $\mcl{H}$ be a $[t]$-edge-colored rooted graph with $\Delta^{mon}(\mcl{H})\leq D$, for some $t,D\in \mathbb{N}$. For any $(w,r)\in V(\mcl{H})\times [t]$ with $\mathrm{deg}_{H_r}(w)<D$, let $\mcl{H}+wu$ be an extension of $\mcl{H}$ with $w\stackrel{r}{\sim}u$. Suppose $\mcl{G}=\{G_1,\ldots,G_t\}$ satisfies
    \[
        |\Gamma_\mcl{G}(X)|>  (D+1) |X| + |\phi(\mcl{H})|,
    \]
    for every $X\subseteq V(\mcl{G})\times [t]$ with $s< |X|\leq 2s$. If there exists a $(2s, D)$-good embedding $\phi: \mcl{H} \hookrightarrow \mcl{G}$, then there also exists a $(2s, D)$-good embedding $\phi': \mcl{H}+wu \hookrightarrow \mcl{G}$ which extends $\phi$.
\end{lemma}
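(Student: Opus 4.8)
The plan is to follow the Friedman--Pippenger extension scheme, adapted to the colorful setting via the function $R(X,\phi)$ and Proposition \ref{prop:RXphi:minus}. We must choose a vertex $a \in \Gamma_{G_r}(\phi(w)) \setminus \phi(\mcl{H})$ to serve as $\phi'(u)$ in such a way that the resulting extension $\phi' = \phi_a$ remains $(2s,D)$-good. By Proposition \ref{prop:RXphi:minus}, for every $X$ with $|X| \leq 2s$ we have $R(X,\phi_a) = R(X,\phi) - \mathbbm{1}[(\phi(w),r)\in X] + \mathbbm{1}[a \in \Gamma_{\mcl{G}}(X)]$. The only way goodness can fail is if there is a ``bad'' set $X$ with $|X| \leq 2s$, $R(X,\phi) = 0$, $(\phi(w),r) \in X$, and $a \notin \Gamma_{\mcl{G}}(X)$. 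So the whole task reduces to: show that the set of vertices $a \in \Gamma_{G_r}(\phi(w)) \setminus \phi(\mcl{H})$ avoided by \emph{some} such bad $X$ does not exhaust all of $\Gamma_{G_r}(\phi(w)) \setminus \phi(\mcl{H})$.

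The key structural step is to understand the union of bad sets. First I would observe that the collection of tight sets (those with $R(X,\phi) = 0$) is closed under union among sets containing $(\phi(w),r)$: if $X_1, X_2$ are tight and $(\phi(w),r) \in X_1 \cap X_2$, then one shows $R(X_1 \cup X_2, \phi) + R(X_1 \cap X_2, \phi) \leq R(X_1,\phi) + R(X_2,\phi)$ by submodularity of $X \mapsto |\Gamma_{\mcl{G}}(X) \setminus \phi(\mcl{H})|$ together with modularity of the other two terms, forcing both $R(X_1 \cup X_2, \phi) = 0$ and $R(X_1 \cap X_2,\phi) = 0$ (the intersection is nonempty, so it is a legitimate candidate and its $R$-value is $\geq 0$). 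Hence there is a \emph{unique maximal} tight set $X^*$ containing $(\phi(w),r)$, \emph{provided} it has size at most $2s$; the sole subtlety is the size cap, and this is exactly where the expansion hypothesis enters. If every tight set $X \ni (\phi(w),r)$ had size $\leq s$, then $X^*$ has size $\leq s$, and I claim $\Gamma_{G_r}(\phi(w)) \setminus \phi(\mcl{H}) \not\subseteq \Gamma_{\mcl{G}}(X^*)$: otherwise, adding the single vertex $(\phi(w),r)$ contributes nothing new to $\Gamma_{\mcl{G}}$, and one checks $R(X^* , \phi)$ would be forced negative or the expansion inequality $|\Gamma_{\mcl{G}}(X)| > (D+1)|X| + |\phi(\mcl{H})|$ applied to a set of size in $(s, 2s]$ would be violated --- more precisely, one enlarges $X^*$ by $s$ arbitrary new pairs to land in the range $(s,2s]$ and derives a contradiction from the expansion bound versus the defining inequality $R \geq 0$. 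Picking $a$ outside $\Gamma_{\mcl{G}}(X^*)$ (and outside $\phi(\mcl{H})$, and in $\Gamma_{G_r}(\phi(w))$) then keeps $R(X,\phi_a) \geq 0$ for all tight $X$, and for non-tight $X$ the drop of at most $1$ is harmless.

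The main obstacle is controlling the size of the maximal tight set: the closure argument only gives a unique maximal tight set within the range $|X| \le 2s$, so one must rule out tight sets of intermediate size $s < |X| \le 2s$. This is handled by the hypothesis that the strong expansion inequality $|\Gamma_{\mcl{G}}(X)| > (D+1)|X| + |\phi(\mcl{H})|$ holds for \emph{all} $X$ with $s < |X| \leq 2s$: for such $X$ we have, bounding $\sum_{(v,i)\in X}[D - \deg_{H_i}(\phi^{-1}(v))] \leq D|X|$ and $|P_\phi(\mcl{H}) \cap X| \leq |X|$,
\[
  R(X,\phi) \;\geq\; |\Gamma_{\mcl{G}}(X)| - |\phi(\mcl{H})| - (D+1)|X| \;>\; 0,
\]
so no set of size in $(s,2s]$ is tight. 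Therefore every tight set containing $(\phi(w),r)$ has size $\leq s$, the unique maximal one $X^*$ has size $\leq s$, and the argument of the previous paragraph goes through: since $|X^*| \le s$ we may enlarge $X^*$ to a set $X^{**}$ with $s < |X^{**}| \le 2s$ (throwing in, say, $s$ fresh pairs $(a',r)$ with $a' \in \Gamma_{G_r}(\phi(w)) \setminus (\phi(\mcl{H}) \cup \Gamma_{\mcl{G}}(X^*))$ if enough exist, or otherwise noting directly that $\Gamma_{\mcl{G}}(X^{**}) = \Gamma_{\mcl{G}}(X^*)$ would contradict the expansion bound), concluding that some valid $a$ lies outside $\Gamma_{\mcl{G}}(X^*)$. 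Choosing that $a$ gives the desired $(2s,D)$-good extension $\phi' : \mcl{H} + wu \hookrightarrow \mcl{G}$.
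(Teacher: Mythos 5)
Your proposal has the right overall architecture (tight sets, submodularity, closure under union, and the observation that the expansion hypothesis rules out tight sets of size in $(s,2s]$ --- this last point is exactly fact (A1) in the paper's proof), but you have reversed the characterization of the obstruction, and the error is not cosmetic. By Proposition \ref{prop:RXphi:minus}, $R(X,\phi_a)=R(X,\phi)+\mathbbm{1}[(\phi(w),r)\in X]-\mathbbm{1}[a\in\Gamma_{\mcl{G}}(X)]$, so the extension $\phi_a$ fails to be good precisely when there is a tight $X$ with $(\phi(w),r)\notin X$ and $a\in\Gamma_{\mcl{G}}(X)$ --- the opposite of the condition ``$(\phi(w),r)\in X$ and $a\notin\Gamma_{\mcl{G}}(X)$'' that you state. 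A set with $(\phi(w),r)\in X$ and $a\notin\Gamma_{\mcl{G}}(X)$ satisfies $R(X,\phi_a)=R(X,\phi)+1>0$ and is harmless.

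This reversal breaks the rest of the argument. You form $X^*$ as the maximal tight set \emph{containing} $(\phi(w),r)$ and then claim $\Gamma_{G_r}(\phi(w))\setminus\phi(\mcl{H})\not\subseteq\Gamma_{\mcl{G}}(X^*)$; but since $(\phi(w),r)\in X^*$ we have $\Gamma_{G_r}(\phi(w))\subseteq\Gamma_{\mcl{G}}(X^*)$ by definition, so that claim is automatically false, and your justification (``adding the single vertex $(\phi(w),r)$ contributes nothing new'') is vacuous for a set that already contains it. The fallback of padding $X^*$ with $s$ arbitrary pairs to reach size in $(s,2s]$ also does not produce a contradiction, since the padding changes both $\Gamma_{\mcl{G}}$ and the deficiency sum in uncontrolled ways. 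The correct argument runs in the other direction: assuming every candidate $a\in A:=\Gamma_{G_r}(\phi(w))\setminus\phi(\mcl{H})$ is obstructed, cover each $a$ by a tight $X_a$ with $(\phi(w),r)\notin X_a$ and $a\in\Gamma_{\mcl{G}}(X_a)$; by (A1) each has size at most $s$, by closure under union $X^*=\bigcup_a X_a$ is tight with $|X^*|\le s$ and $A\subseteq\Gamma_{\mcl{G}}(X^*)$; then $X'=X^*\cup\{(\phi(w),r)\}$ satisfies $\Gamma_{\mcl{G}}(X')\setminus\phi(\mcl{H})=\Gamma_{\mcl{G}}(X^*)\setminus\phi(\mcl{H})$ while the deficiency sum grows by $D-\mathrm{deg}_{H_r}(w)\ge 1$, forcing $R(X',\phi)<0$ with $|X'|\le s+1\le 2s$, contradicting goodness of $\phi$.
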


\begin{proof}
    We first prove several facts about the function $R(X,\phi)$.
        \begin{itemize}
            \item[(A1)] if $R(X,\phi)=0$ and $|X|\leq 2s$, then $|X|\leq s$;
            \item[(A2)] $R(X\cup Y,\phi)+R(X\cap Y,\phi)\leq R(X,\phi)+R(Y,\phi)$;
            \item[(A3)] if $R(X,\phi)=R(Y,\phi)=0$ and $|X|,|Y|\leq s$, then $R(X\cup Y,\phi)=R(X\cap Y,\phi)=0$ and $|X\cup Y|\leq s$.
        \end{itemize}
    
    \begin{proof}[Proof of the facts]
            $(A1)$ It follows from $0=R(X,\phi)\geq |\Gamma_{\mcl{G}}(X)|-|\phi(\mcl{H})|-D|X|-|X|$ that $|\Gamma_{\mcl{G}}(X)|\leq (D+1)|X|+|\phi(\mcl{H})|$. Thus $|X|\leq s$ since $|\Gamma_\mcl{G}(X)|>  (D+1) |X| + |\phi(\mcl{H})|$ when $s< |X|\leq 2s$.
    
            $(A2)$ Note that equality holds for the second and third parts of $R(\star,\phi)$ in this inequality. For the first part of $R(\star,\phi)$, it follows from $\Gamma_{\mcl{G}}(X\cup Y)=\Gamma_{\mcl{G}}(X)\cup \Gamma_{\mcl{G}}(Y)$ and $\Gamma_{\mcl{G}}(X\cap Y)\subseteq\Gamma_{\mcl{G}}(X)\cap\Gamma_{\mcl{G}}(Y)$ that
            \[
                \bigl|\Gamma_{\mcl{G}}(X\cup Y)\backslash\phi(\mcl{H})\bigr|
                +\bigl|\Gamma_{\mcl{G}}(X\cap Y)\backslash\phi(\mcl{H})\bigr|
                \leq \bigl|\Gamma_{\mcl{G}}(X)\backslash\phi(\mcl{H})\bigr|
                +\bigl|\Gamma_{\mcl{G}}(Y)\backslash\phi(\mcl{H})\bigr|.
            \]
    
            $(A3)$ Note that $|X\cap Y|\leq|X\cup Y|\leq 2s$ and $\phi$ is $(2s,D)$-good, hence $R(X\cup Y,\phi),R(X\cap Y,\phi) \geq 0$. $(A2)$ implies that
            \[
                R(X\cup Y,\phi)+R(X\cap Y,\phi)\leq R(X,\phi)+R(Y,\phi)=0.
            \]
            Thus $R(X\cup Y,\phi)=R(X\cap Y,\phi)=0$. Moreover, $|X\cup Y|\leq s$ by $(A1)$.
    \end{proof}

    Let $A=\Gamma_{\mcl{G}}(\phi(w),r)\backslash\phi(\mcl{H})$ be the set of candidates for $u$ to embed. The set $A$ is not empty since the embedding $\phi$ is $(2s, D)$-good ($R(X,\phi)\geq 0$ for $X=\{(\phi(w),r)\}$). For each $a\in A$, denote by $\phi_a:\mcl{H}' \hookrightarrow \mcl{G}$ the extension of $\phi$ with $\phi_a(u)=a$.
    
    Suppose $\phi_a$ is not $(2s, D)$-good for every $a\in A$. Then, for each $a \in A$, there exists $X_a\subseteq V\times [t]$ with $|X_a|\leq 2s$ such that $R(X_a,\phi_a)<0$. On the other hand, we have $R(X_a,\phi)\geq 0$ since $\phi$ is $(2s, D)$-good. It follows from Proposition \ref{prop:RXphi:minus} that
    \[
        0>R(X_a,\phi_a)=R(X_a,\phi)+\mathbbm{1}\bigl[ (\phi(w),r)\in X_a\bigr]-\mathbbm{1}\bigl[ a\in \Gamma_{\mcl{G}}(X_a)\bigr].
    \]
    This inequality only holds when $R(X_a,\phi)=0$, $(\phi(w),r)\not\in X_a$, and $a\in \Gamma_{\mcl{G}}(X_a)$. There holds $|X_a|\leq s$ by $(A1)$.

    Let $X^*=\bigcup_{a\in A}X_a$, then $(\phi(w),r)\not\in X^*$ and $\Gamma_{\mcl{G}}(\phi(w),r)\backslash\phi(\mcl{H})=A\subseteq \Gamma_{\mcl{G}}(X^*)$. It follows from $(A3)$ that $R(X^*,\phi)=0$ and $|X^*|\leq s$. Let $X'=X^*\cup (\phi(w),r)$. There holds $\Gamma_{\mcl{G}}(X')\backslash\phi(\mcl{H})=\Gamma_{\mcl{G}}(X^*)\backslash\phi(\mcl{H})$. By the definition of $R(X,\phi)$, we have
    \[
        R(X',\phi)=R(X',\phi)-R(X^*,\phi)=-\bigl[D-\mathrm{deg}_{H_r}(w)\bigr]-|P_\phi(\mcl{H})\cap X'|+|P_\phi(\mcl{H})\cap X^*|<0.
    \]
    On the other hand, $|X'|=|X^*|+1\leq s+1\leq 2s$ and $\phi$ is $(2s, D)$-good, a contradiction.
\end{proof}

With this extension lemma, we are able to derive a number of beneficial results that are apt for embedding various structures into graph families. 
The first is a {revised version} of \cite[Theorem 6]{ChakLund24}, which is a colorful version of Haxell's theorem.

\begin{corollary}
    Let $s,k,t,D\in\mathbb{N}$ and let $\mcl{G}=\{G_1,\ldots,G_t\}$ be a family of graphs on the same vertex set $V(\mcl{G})$. Suppose
    \begin{enumerate}
        \item $|\Gamma_{\mcl{G}}(X)|\geq (D+1)|X|+1$ for all $X\subseteq V(\mcl{G})\times [t]$ with $1\leq |X|\leq s$, and 
        \item $|\Gamma_{\mcl{G}}(X)|\geq (D+1)|X|+k$ for all $X\subseteq V(\mcl{G})\times [t]$ with $s< |X|\leq 2s$. 
    \end{enumerate}
    Let $\mcl{T}$ be any $[t]$-edge-colored tree with at most $k-1$ vertices and $\Delta^{mon}(\mcl{T})\leq D$. For every $h_0\in V(\mcl{T})$ and every $v_0\in V(\mcl{G})$, there exists a $(2s,D)$-good embedding $\phi:\mcl{T}\hookrightarrow\mcl{G}$, which maps $h_0$ to $v_0$.
\end{corollary}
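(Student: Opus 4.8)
The plan is to build the embedding one vertex at a time by iterated application of the Vertex Extension Lemma \ref{lemcv:vtx:exten}, starting from the singleton embedding $h_0 \mapsto v_0$, and then invoke the Removal Lemma \ref{lemcv:roll:back} only implicitly (here we never actually need to roll back, since we only ever add leaves to a tree). First I would check that the singleton embedding $\phi_0 : \{h_0\} \hookrightarrow \mcl{G}$ with $\phi_0(h_0) = v_0$, where we declare $h_0$ the unique root, is $(2s, D)$-good: for this we need $R(X, \phi_0) \geq 0$ for all $|X| \leq 2s$, i.e. $|\Gamma_{\mcl{G}}(X) \setminus \{v_0\}| \geq \sum_{(v,i)\in X}[D - \deg_{H_i}(\phi_0^{-1}(v))] + |P_{\phi_0}(\mcl{H}) \cap X|$. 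Since the only embedded vertex is the root $h_0$, the $P_{\phi_0}$ term vanishes and $\deg_{H_i}(\phi_0^{-1}(v)) = 0$ for all $(v,i)$, so the right side is $D|X|$; the left side is at least $|\Gamma_{\mcl{G}}(X)| - 1 \geq (D+1)|X| + 1 - 1 = (D+1)|X| > D|X|$ by hypothesis (1) when $1 \leq |X| \leq s$, and for $s < |X| \leq 2s$ hypothesis (2) gives $|\Gamma_{\mcl{G}}(X)| - 1 \geq (D+1)|X| + k - 1 \geq D|X|$ (using $k \geq 1$). So $\phi_0$ is $(2s, D)$-good.

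Next I would order the vertices of $\mcl{T}$ as $h_0, h_1, \ldots, h_{m}$ ($m = |V(\mcl{T})| - 1 \leq k - 2$) so that each $h_j$ with $j \geq 1$ has exactly one neighbor among $\{h_0, \ldots, h_{j-1}\}$ — possible since $\mcl{T}$ is a tree — and call that neighbor $w_j$, with the edge $w_j h_j$ coloured $r_j$. Let $\mcl{T}_j = \mcl{T}[\{h_0,\ldots,h_j\}]$, so $\mcl{T}_j + w_{j+1} h_{j+1} = \mcl{T}_{j+1}$ is an extension of $\mcl{T}_j$ in the sense defined before Lemma \ref{lemcv:vtx:exten}. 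I would then inductively produce $(2s,D)$-good embeddings $\phi_j : \mcl{T}_j \hookrightarrow \mcl{G}$ extending $\phi_0$, at each step applying Lemma \ref{lemcv:vtx:exten}. To apply it at step $j \to j+1$ I must verify: (a) $\deg_{(T_j)_{r_{j+1}}}(w_{j+1}) < D$, which holds because $\Delta^{mon}(\mcl{T}) \leq D$ and we have not yet added $h_{j+1}$; and (b) the expansion hypothesis $|\Gamma_{\mcl{G}}(X)| > (D+1)|X| + |\phi_j(\mcl{T}_j)|$ for all $s < |X| \leq 2s$. For (b), note $|\phi_j(\mcl{T}_j)| = j + 1 \leq m + 1 \leq k - 1$, so hypothesis (2), $|\Gamma_{\mcl{G}}(X)| \geq (D+1)|X| + k > (D+1)|X| + (k-1) \geq (D+1)|X| + |\phi_j(\mcl{T}_j)|$, gives exactly what is needed. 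Lemma \ref{lemcv:vtx:exten} then yields the $(2s,D)$-good extension $\phi_{j+1}$. After $m$ steps, $\phi := \phi_m : \mcl{T} = \mcl{T}_m \hookrightarrow \mcl{G}$ is the desired $(2s,D)$-good embedding mapping $h_0$ to $v_0$.

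There is essentially no hard part here — the statement is designed to be the clean corollary of Lemma \ref{lemcv:vtx:exten} — but the one point demanding care is the bookkeeping of the root structure and the $P_\phi(\mcl{H})$ term so that ``$(2s,D)$-good'' is preserved across all $m$ extensions; Lemma \ref{lemcv:vtx:exten} already guarantees this, so I only need to confirm that each $\mcl{T}_j$ has a valid rooted structure (the single root $h_0$, with $w_{j+1}$ acting as the parent of each newly added leaf $h_{j+1}$), which is automatic since each $\mcl{T}_j$ is a subtree containing $h_0$. The other mild subtlety is matching the constant in hypothesis (2): one needs $k - 1 \geq |\phi_j(\mcl{T}_j)|$ throughout, which is exactly why the tree is required to have at most $k-1$ vertices rather than $k$.
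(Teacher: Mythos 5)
Your proposal is correct and follows the same route as the paper: verify that the singleton embedding $h_0\mapsto v_0$ is $(2s,D)$-good using the two expansion hypotheses, then grow the tree one leaf at a time by repeated application of Lemma \ref{lemcv:vtx:exten}, with hypothesis (2) and the bound $|V(\mcl{T})|\leq k-1$ supplying the expansion condition $|\Gamma_{\mcl{G}}(X)|>(D+1)|X|+|\phi_j(\mcl{T}_j)|$ at every step. The paper's proof is just a terser version of exactly this argument.
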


\begin{proof}
    The mapping $\phi_0:\{h_0\}\to {v_0}$ is a $(2s,D)$-good embedding from $\{h_0\}$ to $\mcl{G}$, since for every subset $X\in V(\mcl{G})\times [t]$ with $|X|\leq 2s$,
    \[
        R(X,\phi)=|\Gamma_{\mcl{G}}(X)\backslash\phi(h_0)|-D|X|\geq (D+1)|X+1-1-D|X|>0.
    \]
    The conclusion follows from repeated application of Lemma \ref{lemcv:vtx:exten}.
\end{proof}

The following lemma that allows us to extend a good embedding by a single vertex.

\begin{lemma}[Vertex Extension Lemma]
    Let {$s,t,D\geq 1$}. Let $\mcl{H}$ be a $[t]$-edge-colored rooted graph with $\Delta^{mon}(\mcl{H})\leq D$, and let $\mcl{G}$ be an $s$-joined graph family on $n$ vertices. For any $(w,r)\in V(\mcl{H})\times [t]$ with $\mathrm{deg}_{H_r}(w)<D$, let $\mcl{H}+wu$ be an extension of $\mcl{H}$ with $w\stackrel{r}{\sim}u$. Suppose $|V(\mcl{H})|\leq n-2sD-3s$ and there exists a $(2s, D)$-good embedding $\phi: \mcl{H} \hookrightarrow \mcl{G}$. 
    Then there also exists a $(2s, D)$-good embedding $\phi': \mcl{H}+wu \hookrightarrow \mcl{G}$ which extends $\phi$.
\end{lemma}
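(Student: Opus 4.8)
The plan is to deduce the lemma directly from the Friedman--Pippenger-style extension lemma, Lemma \ref{lemcv:vtx:exten}, by checking that the $s$-joined hypothesis on $\mcl{G}$, together with the size bound $|V(\mcl{H})| \le n - 2sD - 3s$, implies the expansion condition required there.

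First I would unfold the definition of $s$-joined: it says that the auxiliary bipartite graph $B_\mcl{G}$, with parts $V(\mcl{G}) \times [t]$ and $V(\mcl{G})$, is $s$-bijoined. Since $\Gamma_\mcl{G}(X)$ is precisely the neighbourhood of $X$ in $B_\mcl{G}$, this forces
\[
|\Gamma_\mcl{G}(X)| \ge n - s + 1 \qquad \text{for every } X \subseteq V(\mcl{G}) \times [t] \text{ with } |X| \ge s,
\]
because otherwise $X$ and the set $V(\mcl{G}) \setminus \Gamma_\mcl{G}(X)$ would be two sets of size at least $s$, one on each side of $B_\mcl{G}$, with no edge between them, contradicting $s$-bijoinedness.

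Next I would verify the hypothesis of Lemma \ref{lemcv:vtx:exten}. Let $X \subseteq V(\mcl{G}) \times [t]$ with $s < |X| \le 2s$. On one hand, the display above gives $|\Gamma_\mcl{G}(X)| \ge n - s + 1$. On the other hand, using $|\phi(\mcl{H})| = |V(\mcl{H})| \le n - 2sD - 3s$ and $|X| \le 2s$,
\[
(D+1)|X| + |\phi(\mcl{H})| \le 2s(D+1) + (n - 2sD - 3s) = n - s < |\Gamma_\mcl{G}(X)|.
\]
Hence $|\Gamma_\mcl{G}(X)| > (D+1)|X| + |\phi(\mcl{H})|$ holds for all such $X$, which is exactly what Lemma \ref{lemcv:vtx:exten} requires. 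Applying that lemma to the given $(2s,D)$-good embedding $\phi$ and the extension $\mcl{H}+wu$ (legitimate since $\mathrm{deg}_{H_r}(w) < D$ by assumption) produces the desired $(2s,D)$-good extension $\phi'$.

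There is no genuine obstacle in this argument: it is essentially a one-line reduction, once the $s$-joined property is restated as saying that all but fewer than $s$ vertices of $V(\mcl{G})$ lie in $\Gamma_\mcl{G}(X)$ whenever $|X| \ge s$. The only points demanding care are keeping the inequality in the last display strict --- which is why the size bound carries slack $3s$ rather than $2s$ --- and noticing that Lemma \ref{lemcv:vtx:exten} only needs the expansion estimate for $s < |X| \le 2s$, so the regime $|X| \le s$, where the $s$-joined property gives no information, never intervenes.
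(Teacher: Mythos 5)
Your proof is correct and follows essentially the same route as the paper: both arguments use the $s$-joined property to show $|\Gamma_{\mcl{G}}(X)| > n-s$ for $|X|>s$, then combine this with the size bound $|V(\mcl{H})|\le n-2sD-3s$ to verify the expansion hypothesis of Lemma \ref{lemcv:vtx:exten}. Nothing to add.
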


\begin{proof}
    For every $X\subseteq V(\mcl{G})\times [t]$, there is no edge from $X$ to $V(\mcl{G})\backslash \Gamma_{\mcl{G}}(X)$. If $s\leq |X|\leq 2s$, it holds that $|V(\mcl{G})\backslash \Gamma_{\mcl{G}}(X)|<s$ since $\mcl{G}$ is $s$-joined. Therefore,
    \[
        |\Gamma_{\mcl{G}}(X)|> n-s \geq |\phi(\mcl{H})|+ 2sD + 2s \geq |\phi(\mcl{H})|+ (D+1)|X|.
    \]
    The conclusion follows from Lemma \ref{lemcv:vtx:exten}.
\end{proof}

The following lemma can be proved by an iterative application of the Vertex Extension Lemma.

\begin{lemma}[Forest Extension Lemma]\label{lem:forestExtension}
    Let {$s,t,D\geq 1$}. Let $\mcl{H}$ be a $[t]$-edge-colored rooted graph with $\Delta^{mon}(\mcl{H})\leq D$, and let $\mcl{H}_0$ be a subgraph of $\mcl{H}$ which can be obtained by a sequence of non-root leaves removals ($\mcl{H}$ is the union of $\mcl{H}_0$ and a forest). Let $\mcl{G}=\{G_1,\ldots,G_t\}$ be an $s$-joined graph family and let $\phi_0: \mcl{H}_0 \hookrightarrow \mcl{G}$ be a $(2s, D)$-good embedding. Suppose $|V(\mcl{H})|\leq |V(\mcl{G})|-2sD-3s$, then there exists a $(2s, D)$-good embedding $\phi: \mcl{H} \hookrightarrow \mcl{G}$ which extends $\phi_0$.
\end{lemma}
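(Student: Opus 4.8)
The plan is to realise $\mcl{H}$ as the result of adding the vertices of the forest back onto $\mcl{H}_0$ one at a time, each as a new non-root leaf, and then to transport $\phi_0$ forward across these additions by repeated use of the Vertex Extension Lemma. Concretely, the first step is to reverse the given sequence of non-root-leaf removals. Write $\mcl{H}=\mcl{L}_0,\mcl{L}_1,\ldots,\mcl{L}_m=\mcl{H}_0$ where $\mcl{L}_i$ is obtained from $\mcl{L}_{i-1}$ by deleting a non-root leaf $x_i$; let $y_i\neq x_i$ be the unique neighbour of $x_i$ in $\mcl{L}_{i-1}$ and let $r_i$ be the colour of $x_iy_i$. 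Since deleting $x_i$ removes only $x_i$, we have $y_i\in V(\mcl{L}_i)$, so reading the chain backwards yields graphs $\mcl{K}_0:=\mcl{H}_0$ and $\mcl{K}_j:=\mcl{K}_{j-1}+y_{m-j+1}x_{m-j+1}$ for $1\le j\le m$, with $\mcl{K}_m=\mcl{H}$. Each $\mcl{K}_j$ is an \emph{extension} of $\mcl{K}_{j-1}$ in the sense of Section~\ref{sec:roll-back}: one new vertex $x_{m-j+1}$ is attached by an $r_{m-j+1}$-coloured edge to $y_{m-j+1}\in V(\mcl{K}_{j-1})$, and this new vertex is a non-root leaf whose parent is exactly $y_{m-j+1}$, so the root and parent labels of each $\mcl{K}_j$ are simply those of $\mcl{H}$ restricted to $V(\mcl{K}_j)$. (If $m=0$ then $\mcl{H}=\mcl{H}_0$ and $\phi:=\phi_0$ already works.)

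Next I would run the induction along this chain. Suppose $\phi_{j-1}:\mcl{K}_{j-1}\hookrightarrow\mcl{G}$ is $(2s,D)$-good; I claim the Vertex Extension Lemma applies to the extension $\mcl{K}_j=\mcl{K}_{j-1}+wu$ with $w=y_{m-j+1}$, $u=x_{m-j+1}$, $r=r_{m-j+1}$. Indeed, $\Delta^{mon}(\mcl{K}_{j-1})\le\Delta^{mon}(\mcl{H})\le D$ because $\mcl{K}_{j-1}\subseteq\mcl{H}$; next, $\mathrm{deg}_{(K_{j-1})_r}(w)<D$ because $\mathrm{deg}_{(K_j)_r}(w)\le D$ (again $\mcl{K}_j\subseteq\mcl{H}$) and deleting the $r$-coloured edge $wu$ lowers this quantity by one; and finally $|V(\mcl{K}_{j-1})|=|V(\mcl{H}_0)|+j-1\le|V(\mcl{H})|-1\le|V(\mcl{G})|-2sD-3s$. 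Hence the Vertex Extension Lemma gives a $(2s,D)$-good embedding $\phi_j:\mcl{K}_j\hookrightarrow\mcl{G}$ extending $\phi_{j-1}$. Applying this for $j=1,\ldots,m$ starting from $\phi_0$ and setting $\phi:=\phi_m$ produces a $(2s,D)$-good embedding $\phi:\mcl{H}\hookrightarrow\mcl{G}$ extending $\phi_0$, which is what is wanted.

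The step I expect to need the most care is the reversal in the first paragraph: checking that undoing a sequence of non-root-leaf deletions is \emph{literally} a sequence of the single-vertex extensions for which the Vertex Extension Lemma is stated — in particular that each restored vertex genuinely re-enters as a degree-one vertex with a well-defined parent and no self-loop, and that the root/parent structure of $\mcl{H}$ descends compatibly to every $\mcl{K}_j$. Once that is pinned down the induction is routine, since being a subgraph of $\mcl{H}$ automatically supplies both the monochromatic-degree bound and the vertex-count bound required at each step, and no additional hypothesis on $\mcl{G}$ beyond $s$-joinedness is needed.
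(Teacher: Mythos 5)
Your proposal is correct and is exactly the argument the paper intends: the paper gives no written proof of Lemma \ref{lem:forestExtension} beyond the remark that it follows by iterating the Vertex Extension Lemma, and your reversal of the leaf-removal sequence together with the verification of the degree and size hypotheses at each step fills in precisely those details.
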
 

The next lemma allows extension from an embedding of $\mcl{H}$ to an embedding of $\mcl{H}+P$, with both ends of $P$ in $V(\mcl{H})$. This result is useful since it allows the embedding of graphs with cycles.

\begin{lemma}[Path Connection Lemma]\label{lem:pathExtension}
    For $s,t \geq 1$, $D \geq 3$, let $k= \lceil\log s/ \log(D - 1)\rceil$, and suppose $\ell \geq 2k + 3$. Let $\mcl{G}=\{G_1,\ldots,G_t\}$ be an $s$-joined graph family. Let $\phi: \mcl{H} \hookrightarrow \mcl{G}$ be a $(2s, D)$-good embedding of a $[t]$-edge-colored rooted $\mcl{H}$ with $\Delta^{mon}(\mcl{H})\leq D$.

    Let $P$ be an $a,b$-path of length $\ell$ with any $[t]$-coloring pattern, $V(P)\cap V(\mcl{H})=\{a,b\}$, {and $|V(\mcl{H}+P)|\leq n-4sD-5s$}. Suppose the colors on edges incident to $a,b$ in $P$ has color $i$ and $j$ respectively, and  $\mathrm{deg}_{H_i}(a), \mathrm{deg}_{H_j}(b)\leq D-1$. Then there exists a $(2s, D)$-good embedding $\phi': \mcl{H} + P \hookrightarrow \mcl{G}$ which extends $\phi$.
\end{lemma}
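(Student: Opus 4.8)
The plan is to realize $P$ as two trees grown from the two endpoints $a,b$, joined by a single edge supplied by the $s$-joined hypothesis, and then to clean up the unused parts of the trees with the Removal Lemma. Write $\ell = \ell_1 + 1 + \ell_2$ where $\ell_1, \ell_2 \geq k+1$ (this is possible since $\ell \geq 2k+3$). The idea is that a complete $(D-1)$-ary tree of depth $k$ has at least $(D-1)^k \geq s$ leaves, so once we grow such trees we can use the joined condition to find a connecting edge between their leaf sets. First I would fix which color is needed on each edge of $P$: let $r_1, \ldots, r_\ell$ be the color pattern of $P$ read from $a$ to $b$, so $r_1 = i$ and $r_\ell = j$. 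I want to grow from $a$ a tree $\mcl{T}_a$ that looks like a path of length $\ell_1$ starting with an $r_1$-edge at $a$, then, at the far end of that path, a complete $(D-1)$-ary rooted tree of depth $k$ in which every edge at depth $m$ from $a$ gets color $r_{m}$ — actually it is cleaner to make $\mcl{T}_a$ just a single path of length $\ell_1$ with the colors $r_1,\ldots,r_{\ell_1}$, and only at its last vertex attach a $(D-1)$-ary tree of depth $k$ whose edges all carry the appropriate remaining colors of $P$. Symmetrically grow $\mcl{T}_b$ from $b$ using the reversed tail of the color sequence. Because $\mathrm{deg}_{H_i}(a), \mathrm{deg}_{H_j}(b) \leq D-1$ and $\Delta^{mon}(\mcl{H}) \leq D$, the graph $\mcl{H} + \mcl{T}_a + \mcl{T}_b$ still has maximum monochromatic degree at most $D$, and it is obtained from $\mcl{H}$ by adding two forests; the new vertices are all non-root leaves in the obvious order.

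Next I would apply the Forest Extension Lemma (Lemma \ref{lem:forestExtension}) to extend $\phi$ to a $(2s,D)$-good embedding $\psi$ of $\mcl{H} + \mcl{T}_a + \mcl{T}_b$. For this I need the size bound $|V(\mcl{H} + \mcl{T}_a + \mcl{T}_b)| \leq n - 2sD - 3s$. Each of $\mcl{T}_a, \mcl{T}_b$ has fewer than $\ell + 2(D-1)^{k+1}$ vertices; using $(D-1)^k \leq s$ we get $(D-1)^{k+1} \leq s(D-1) \leq sD$, so the two trees together add at most $2\ell + 4sD$ vertices, hence $|V(\mcl{H}+\mcl{T}_a+\mcl{T}_b)| \leq |V(\mcl{H}+P)| + 4sD$ (since $P$ itself has $\ell - 1$ interior vertices, this is a crude but sufficient accounting). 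Combined with the hypothesis $|V(\mcl{H}+P)| \leq n - 4sD - 5s$ this should give something comfortably below $n - 2sD - 3s$; this is where I expect to have to be slightly careful with the constants, choosing $\ell_1, \ell_2$ as balanced as possible and checking the arithmetic, but it is routine.

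Now let $L_a, L_b \subseteq V(\mcl{G})$ be the images under $\psi$ of the leaf sets of the depth-$k$ binary-type subtrees of $\mcl{T}_a, \mcl{T}_b$; each has size at least $(D-1)^k \geq s$. Here is the one subtlety: I need a connecting edge of the \emph{correct color}. So I should not grow a full $(D-1)$-ary tree at every step, but rather, at the last level, make all $\geq s$ leaves of $\mcl{T}_a$ hang off their parents by edges of the single color $r_{\ell_1 + m}$ matching the position in $P$ where the two trees are to meet — concretely, arrange $\mcl{T}_a$ and $\mcl{T}_b$ so that the leaf of $\mcl{T}_a$ and the leaf of $\mcl{T}_b$ that we eventually keep are joined by an edge whose color is the middle color $r_{\ell_1+1}$ of $P$. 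To make this work for \emph{some} pair of leaves, I let $A = \{a' : (a', r_{\ell_1+1}) \in \psi(L_a) \times \{r_{\ell_1+1}\}\}$ be the leaf-images of $\mcl{T}_a$ viewed in copy $r_{\ell_1+1}$ of the auxiliary bipartite graph, and $B = \psi(L_b)$; since $|A|, |B| \geq s$ and $\mcl{G}$ is $s$-joined (i.e. $B_\mcl{G}$ is $s$-bijoined), there is an edge $a' \sim b'$ in $B_\mcl{G}$, meaning $\psi^{-1}(a') \sim \psi^{-1}(b')$ in $G_{r_{\ell_1+1}}$ with $a' \in L_a$, $b' \in L_b$. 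By the Edge Connection Lemma I add this $r_{\ell_1+1}$-colored edge to the embedded graph, relabelling $a'$, $b'$ and all their ancestors (which are exactly the vertices on the unique $a\!-\!a'$ path in $\mcl{T}_a$ and the $b\!-\!b'$ path in $\mcl{T}_b$) as roots; goodness is preserved by that lemma. The union of those two paths together with the new edge is precisely a copy of $P$ embedded inside $\mcl{H}$, with the right colors in the right order by construction.

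Finally, everything in $\mcl{T}_a \cup \mcl{T}_b$ that is not on the retained $a\!-\!b$ path is still a non-root vertex, and the set of such vertices can be peeled off one leaf at a time (the leftover of a $(D-1)$-ary tree minus a root-to-leaf path is again something you can strip leaf-by-leaf toward the retained path). Applying the Removal Lemma (Lemma \ref{lemcv:roll:back}) repeatedly deletes all of them while keeping the embedding $(2s,D)$-good, and what remains is a $(2s,D)$-good embedding $\phi'$ of exactly $\mcl{H} + P$ extending $\phi$, as required. The main obstacle, and the part deserving the most care in the full write-up, is the bookkeeping in the middle step: designing $\mcl{T}_a$ and $\mcl{T}_b$ so that (i) their maximum monochromatic degree stays $\leq D$ given the degree slack only at $a$ and $b$, (ii) a connecting edge of the single required color $r_{\ell_1+1}$ exists — which forces the last-level leaves to all use that one color, so the relevant "tree" structure at the join is a $(D-1)$-ary tree of depth $k$ with \emph{uniform} last-edge color — and (iii) the vertex count stays within the Forest Extension Lemma's budget. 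Once the tree is set up correctly, the three invocations (Forest Extension, Edge Connection, Removal) are essentially turnkey.
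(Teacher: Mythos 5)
Your proposal is correct and takes essentially the same route as the paper's proof: grow a path segment plus a complete $(D-1)$-ary tree of depth $k$ from each endpoint, use the $s$-joined property of the auxiliary bipartite graph $B_{\mcl{G}}$ to find a connecting edge of the prescribed middle colour, apply the Edge Connection Lemma, and strip the unused tree vertices with the Removal Lemma (the paper merely puts the entire path segment, of length $\ell-2k-3$, on the $a$-side rather than splitting it between the two ends). The one bookkeeping correction: since each side contributes depth $\ell_i+k$ and the connecting edge contributes $1$, you need $\ell_1+\ell_2=\ell-2k-1$ rather than $\ell-1$ as written --- but this is exactly the arithmetic you flagged as needing care, and with that choice the vertex-count bound for the Forest Extension Lemma and the rest of the argument go through.
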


\begin{proof}
    Let $Q$ be the subpath of $P$ with length $l-2k-3$ which has $a$ as one of its end vertices. Let $a_0$ be the other end vertex of $Q$. By the Forest Extension Lemma, there exists a $(2s, D)$-good embedding $\phi_0: \mcl{H} + Q \hookrightarrow \mcl{G}$ which extends $\phi$. 

    Connect each of $a_0$, $b$ by an edge to a complete $(D-1)$-ary with height $k$, such that the attached edges and each layer of the $(D-1)$-aries have a specific color corresponding to the pattern of the $(a_0,b)$-path on $P$. Denote by $T$ the new forest attached to $\mcl{H}+Q$, and let $a_0$,$b$ be its roots. Let $A'$ and $B'$ be the sets of leaves of the components rooted at $a_0,b$ respectively. Note that $|A'|=|B'|= (D-1)^k\geq s$. It follows from $(D-1)^{k-1}<s\leq (D-1)^k$ that,
    \[
        |V(T)|\leq 2\cdot\frac{(D-1)^{k+1}-1}{D-2}{< 2\cdot\frac{(D-1)^2\cdot s}{D-2}\leq 2s(D+1).}
    \]`

    By the Forest Extension Lemma, there exists a $(2s, D)$-good embedding $\phi_1$ from $\mcl{H}+Q+T$ into $\mcl{G}$ which extends $\phi_0$. Let $r$ be the color of the middle edge in the given pattern. Since $\mcl{G}$ is $s$-joined, there exists an edge from $\phi_1(A')\times \{r\}$ to $\phi_1(B')$, saying $xy$, where $x\in \phi_1(A')$ and $y\in \phi_1(B')$. Let $a':=\phi^{-1}_1(x)$, $b':=\phi^{-1}_1(y)$, and $e=a'b'$, then $a'\in A'$ and $b'\in B'$. By the Edge Connection Lemma, $\phi_1$ is also a good embedding of $\mcl{H}+Q+T+e$. Since the $(a,b)$-path in $Q+T+e$ has the same pattern as $P$, we name it by $P$. Note that $\mcl{H}+P$ can be obtained from $\mcl{H}+Q+T$ by remove the non-root leaves in $T$ repeatedly. The Removal Lemma implies that the restriction of $\phi_1$ on $\mcl{H}+P$, say $\phi'$, is also a $(2s, D)$-good embedding which extends $\phi$.
\end{proof}

Hyde, Morrison, M{\" u}yesser, and Pavez-Sign{\'e} \cite{HydMorMuySig23+} introduced the notion of \emph{path construtable graphs}. These are graphs that can be constructed from a forest by repeatedly adding paths to it. It is intuitively clear that such graphs can be readily embedded using the extension lemmas discussed above, and we show it formally below.

\begin{definition} 
    Let $\mcl{H}$ be a $[t]$-edge-colored graph and let $\mcl{H}_0\subseteq \mcl{H}$. We say that $\mcl{G}$ is $\mcl{H}_0$-path-constructible if there exists a sequence of edge-disjoint paths $P_1, \ldots , P_k$ in $G$ with the following properties.
    \begin{enumerate}[label=(\roman*)] 
        \item $E(G) =E(\mcl{H}_0)\cup \bigcup_{j\in [k]} E(P_j) $;
        \item For each $i \in [k]$, the internal vertices of $P_i$ are disjoint from $V(\mcl{H}_0) \cup \bigcup_{j\in [i-1]} V (P_j )$.
        \item For each $i \in [k]$, at least one of the endpoints of $P_i$ belongs to $V(\mcl{H}_0) \cup \bigcup_{j\in [i-1]} V (P_j )$.
    \end{enumerate}
\end{definition}

\begin{theorem}\label{thmcv:pthcns:gdembd}
    For $s,t\geq 1$, $D\geq 3$, let $\ell = 2\lceil\log s/\log(D-1)\rceil +3$. Let $\mcl{G}=\{G_1,\ldots,G_t\}$ be an $s$-joined graph family. Let $\mcl{H}$ be a $[t]$-edge-colored graph with $\Delta^{mon}(\mcl{H})\leq D$ and {$|V(H)|\leq n-6sD$}. Suppose $\mcl{H}$ is $\mcl{H}_0$-path-constructible with paths of length at least $\ell$ and there exists an $(2s,D)$-good embedding $\phi: \mcl{H}_0\hookrightarrow \mcl{G}$. Then there exists an $(2s,D)$-good embedding $\phi': \mcl{H}\hookrightarrow \mcl{G}$ which extends $\phi$.
\end{theorem}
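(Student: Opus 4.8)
The plan is to induct on the number $k$ of paths in a path-construction sequence $P_1,\dots,P_k$ witnessing that $\mcl{H}$ is $\mcl{H}_0$-path-constructible, peeling the paths off one at a time. Set $\mcl{H}^{(i)} = \mcl{H}_0 \cup P_1 \cup \dots \cup P_i$, so that $\mcl{H}^{(0)} = \mcl{H}_0$ and $\mcl{H}^{(k)} = \mcl{H}$, and observe that each $\mcl{H}^{(i)}$ is a subgraph of $\mcl{H}$; hence $\Delta^{mon}(\mcl{H}^{(i)}) \le \Delta^{mon}(\mcl{H}) \le D$ and $|V(\mcl{H}^{(i)})| \le |V(\mcl{H})| \le n - 6sD$ for every $i$. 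I would maintain the invariant that there is a $(2s,D)$-good embedding $\phi^{(i)}\colon \mcl{H}^{(i)} \hookrightarrow \mcl{G}$ extending $\phi^{(0)} = \phi$, and show how to pass from $\phi^{(i-1)}$ to $\phi^{(i)}$ by adding the single path $P_i$.

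For the inductive step, properties (ii) and (iii) of a path-construction say that the internal vertices of $P_i$ avoid $V(\mcl{H}^{(i-1)})$ and that at least one endpoint $a$ of $P_i$ lies in $V(\mcl{H}^{(i-1)})$. I split into two cases. If the other endpoint $b$ also lies in $V(\mcl{H}^{(i-1)})$, then $V(P_i)\cap V(\mcl{H}^{(i-1)}) = \{a,b\}$ and $P_i$ has length at least $\ell = 2\lceil \log s/\log(D-1)\rceil + 3$, so I would apply the Path Connection Lemma \ref{lem:pathExtension} to $\mcl{H}^{(i-1)}$ and $P_i$. Its size hypothesis $|V(\mcl{H}^{(i-1)}+P_i)| = |V(\mcl{H}^{(i)})| \le n - 4sD - 5s$ follows from $|V(\mcl{H}^{(i)})| \le n - 6sD$ together with $D\ge 3$ (which gives $6sD \ge 4sD + 5s$). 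For its degree hypothesis: the edge of $P_i$ at $a$, of some color $c$, joins $a$ to an internal — hence new — vertex of $P_i$, so it is an edge of $\mcl{H}$ of color $c$ at $a$ that is absent from $\mcl{H}^{(i-1)}$; since $\mcl{H}$ has at most $D$ edges of color $c$ at $a$, the color-$c$ degree of $a$ in $\mcl{H}^{(i-1)}$ is at most $D-1$, which is exactly what the lemma needs at $a$, and symmetrically at $b$. If instead $b \notin V(\mcl{H}^{(i-1)})$, then $P_i$ is a pendant path attached to $\mcl{H}^{(i-1)}$ at $a$ whose other vertices are all new, so $\mcl{H}^{(i-1)}$ is obtained from $\mcl{H}^{(i)}$ by repeatedly deleting a non-root leaf (starting at $b$ and working toward $a$); the Forest Extension Lemma \ref{lem:forestExtension} then extends $\phi^{(i-1)}$ to a $(2s,D)$-good embedding of $\mcl{H}^{(i)}$, its size hypothesis $|V(\mcl{H}^{(i)})| \le n - 2sD - 3s$ again following from $|V(\mcl{H}^{(i)})| \le n - 6sD$ and $D \ge 1$.

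Iterating through $i = 1,\dots,k$ yields $\phi^{(k)}\colon \mcl{H}\hookrightarrow\mcl{G}$, a $(2s,D)$-good embedding extending $\phi$, as required; the case $k=0$ is immediate. One bookkeeping point to keep in mind is that the Path Connection Lemma may relabel $a,b$ and their ancestors as roots, but this only changes the rooted structure of $\mcl{H}^{(i)}$, not the underlying injection, so the ``extends'' relation and all hypotheses ($\Delta^{mon}\le D$, the vertex budget) pass unchanged to the next step. I do not expect a genuine obstacle here — the paper itself flags the statement as intuitively clear — so the main thing to get right is precisely the two verifications above: that attaching $P_i$ never violates the monochromatic degree bound at $a$ or $b$ in the current partial graph (which rests on the edge of $P_i$ at each of $a,b$ being genuinely new), and that the single global budget $|V(\mcl{H})|\le n-6sD$ is slack enough to satisfy the size hypotheses of \emph{both} extension lemmas, which is where $D\ge 3$ is used.
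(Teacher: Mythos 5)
Your proposal is correct and follows essentially the same route as the paper: induct over the path sequence $P_1,\dots,P_k$, applying the Forest Extension Lemma when $P_i$ has one endpoint in the current graph and the Path Connection Lemma when it has two. Your write-up is in fact more careful than the paper's, explicitly verifying the size budgets (where $D\ge 3$ gives $6sD\ge 4sD+5s$) and the monochromatic degree condition at the endpoints, both of which the paper leaves implicit.
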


\begin{proof}
    Since $\mcl{H}$ is $\mcl{H}_0$-path-constructible, let $P_1, \ldots , P_k$ be the sequence of edge-disjoint paths under this construction process. Let $\mcl{H}_i:=\mcl{H}_{i-1}\cup P_i$ for $i\in [k]$. Then $\phi$ is an $(s,D)$-good embedding from $\mcl{H}_0$ into $\mcl{G}$ by the hypothesis. For $i\in [k]$ when a $(s,D)$-good embedding from $\mcl{H}_{i-1}$ into $\mcl{G}$ is given, we need to show there esists a $(s,D)$-good embedding from $\mcl{H}_{i}$ into $\mcl{G}$ which extends the former embedding. This is done by applying the Forest Extension Lemma when  $P_i$ has only one end vertex in $V(\mcl{H}_{i-1})$, and applying the Path Connection Lemma when $P_i$ has two end vertices in $V(\mcl{H}_{i-1})$.
\end{proof}

\section{Embedding graphs using rollback}\label{sec:application}

The results of the section \ref{sec:roll-back} show how to extend a good embedding of a graph $\mathcal{H}$ to a good embedding of a larger graph $\mcl{H}'$.
This leaves the question: How do we find a good embedding to start with?
An $s$-joined graph could have isolated vertices, and hence even the trivial embedding $\phi:\emptyset \hookrightarrow V(\mcl{G})$ is not guaranteed to be good.
We address this problem (see Lemma \ref{lemcv:sjoin:gdembd}) by finding subsets $W \subseteq V' \subseteq V(\mathcal{G})$ such that arbitrary embeddings into $W$ are good embeddings into $V'$.

Let $B_{\mcl{G}}$ denote the auxiliary bipartite graph with vertex classes $U:= V\times [t]$ and $V$ of an $s$-joined graph famlily $\mcl{G}$. For subsets $X\subseteq U$ and $Y\subseteq V$, we use $N(X,Y)$ to denote $N(X)\cap Y$ in $B_{\mcl{G}}$. Note that $N(X,Y)$ in $B_{\mcl{G}}$ is the same subset of $V$ as $\Gamma_{\mcl{G}}(X)\cap Y$ in $\mcl{G}$. Here we define
    \[
        N^*(X,Y) := N(X,Y)\backslash X|_V = N_{\mcl{G}}(X)\cap Y,
    \]
in which $X|_V:=\{v\in V:(v,i)\in X\}$. Montgomery (\cite{Mont19}, Proposition 3.34) showed that $s$-joined bipartite graphs exhibit good expansion properties for small sets. The next result is analogous to this finding.

\begin{proposition}\label{prop:bijumb:exten}
    Let {$s,t,D \geq 1$}. Let $B_{\mcl{G}}$ be the auxiliary bipartite graph of an $s$-joined graph family $\mcl{G}$, with vertex classes $U:= V\times [t]$ and $V$. Suppose $Y_0\subseteq V$ satisfies $|Y_0| \geq 3sD + 4s$. Then, there exists a subset $U_0 \subset U$, with $|U_0| \leq s$, such that if $X \subset U \backslash U_0$ and $|X| \leq 2s$, then $|N^*(X, Y_0 \backslash {U_0}|_V)| \geq D|X|$. 
\end{proposition}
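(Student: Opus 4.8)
The plan is to find the bad set $U_0$ iteratively, peeling off vertices of $U$ that "waste" too much of the neighborhood $Y_0$. Call a set $X \subseteq U$ \emph{deficient} if $|X| \leq 2s$ and $|N^*(X, Y_0 \setminus X|_V)| < D|X|$; we want to show that after deleting a small set $U_0$ from $U$, no deficient set survives. The key observation is the submodular-type behavior of $N$: for $X, X' \subseteq U$ we have $N(X \cup X') = N(X) \cup N(X')$, so if $X$ and $X'$ are both deficient with $|X \cup X'| \leq 2s$, then $X \cup X'$ is ``nearly deficient'' — more precisely, the standard union argument (as in the proof of Lemma \ref{lemcv:vtx:exten}, facts (A2), (A3)) shows that among all deficient sets there is a unique maximal one of size at most $s$, \emph{provided} we have enough room in $Y_0$ to run the argument. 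I would set $U_0$ to be this maximal deficient set, or grow it greedily: start with $U_0 = \emptyset$; while there exists a deficient $X$ disjoint from $U_0$, replace $U_0$ by $U_0 \cup X$. The point is to bound the total size of $U_0$ by $s$.

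The size bound is where the $s$-joined hypothesis enters. Suppose for contradiction that at some stage $|U_0| > s$; take $U_0' \subseteq U_0$ with $s < |U_0'| \leq 2s$ (this is possible since $U_0$ was built by adding sets of size at most $2s$, so it cannot jump past the window $(s,2s]$ — if a single addition overshoots we stop earlier with $|U_0| \le 2s$ and argue directly). Let $Z = V \setminus \Gamma_{\mcl{G}}(U_0')$; since $\mcl{G}$ is $s$-joined and $|U_0'| \geq s$ (viewing $U_0'$ as a subset of the $U$-side of $B_\mcl{G}$), we get $|Z| < s$, i.e. $|\Gamma_{\mcl{G}}(U_0')| > |V| - s$. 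But $U_0'$ is a union of deficient pieces, so $|N(U_0', Y_0 \setminus U_0'|_V)|$ should be comparable to $D|U_0'|$ — combining the deficiency bounds via the union argument gives $|N^*(U_0', Y_0 \setminus U_0'|_V)| < D|U_0'| \leq 2sD$, hence $|N(U_0', Y_0) \cap Y_0| \le 2sD + |U_0'|_V| \le 2sD + 2s$. On the other hand $N(U_0', Y_0) \supseteq \Gamma_{\mcl{G}}(U_0') \cap Y_0$, which has size at least $|Y_0| - s \geq 3sD + 4s - s = 3sD + 3s$. Since $3sD + 3s > 2sD + 2s$ for $D \ge 1$, this is a contradiction, so $|U_0| \leq s$ throughout.

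It remains to check that once the greedy process terminates, the conclusion holds as stated: if $X \subseteq U \setminus U_0$ with $|X| \leq 2s$, then $X$ is not deficient \emph{relative to the process}, but I must be careful that deficiency was tested against $Y_0 \setminus X|_V$ rather than $Y_0 \setminus U_0|_V$. Since $|U_0| \le s$, passing from $Y_0$ to $Y_0 \setminus U_0|_V$ loses at most $s$ vertices from the neighborhood, and passing from $Y_0 \setminus X|_V$ to $(Y_0 \setminus U_0|_V)$ is what we want; the cleanest fix is to run the whole argument with the slightly enlarged target — i.e., redefine deficiency using $Y_0 \setminus (U_0 \cup X)|_V$ and absorb the extra $s$ into the slack $3sD + 4s$ versus $2sD + 2s$, which is comfortable. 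The main obstacle I anticipate is precisely this bookkeeping: making the union/submodularity argument (A2)–(A3) go through when the ``forbidden'' set $Y_0 \setminus X|_V$ depends on $X$, so that the maximal deficient set is genuinely well-defined. This is handled by noting $X|_V \subseteq N^*(X, \cdot)^c$ only changes things by lower-order terms, and the generous gap between $3sD+4s$ and $2sD+2s$ leaves room for all such adjustments.
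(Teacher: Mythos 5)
Your proposal is correct and takes essentially the same approach as the paper: the paper simply defines $U_0$ to be a \emph{maximum} set with $|U_0|\le s$ and $\bigl|N^*(U_0,Y_0)\bigr|<D|U_0|$ rather than growing it greedily, and then derives the same contradiction from the subadditivity $\bigl|N^*(U_0\cup X,Y_0)\bigr|\le \bigl|N^*(U_0,Y_0)\bigr|+\bigl|N^*(X,Y_0\setminus U_0|_V)\bigr|$, the $s$-joined property, and the bound $|Y_0|\ge 3sD+4s$. That maximality formulation automatically resolves the bookkeeping you worry about at the end; the only point to tighten is that the union $U_0\cup X$ can have size up to $3s$ (not just $2s$), which the slack $3sD+4s-4s=3sD\ge D\,|U_0\cup X|$ still covers.
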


\begin{proof}
    Let $U_0\subseteq U$ be a maximum set with $|U_0|\leq s$ and $|N^*(U_0,Y_0)|<D|U_0|$. Now we show that $U_0$ satisfies the condition. Suppose there exists $X\subseteq U\backslash U_0$ with $|X|\leq 2s$ and $N^*(X,Y_0\backslash {U_0}|_V)<D|X|$. It follows from $N^*(U_0\cup X, Y_0)\subseteq Y_0\backslash (U_0\cup X)|_V$ that
    \[
        |N^*(U_0\cup X, Y_0)|\leq|N^*(U_0,Y_0)|+|N^*(X,Y_0\backslash U_0|_V)|< D(|U_0|+|X|)=D|U_0\cup X|.
    \]
    
    Thus $|U_0\cup X|>s$ by the choice of $U_0$. 
    Since there is no edge between $U_0\cup X$ and $Y_0\backslash N(U_0\cup X)$, we have $|Y_0\backslash N(U_0\cup X)|<s$ by the joinedness condition. It follows that
    \begin{eqnarray*}
        |N^*(U_0\cup X, Y_0)|&\geq& |N(U_0\cup X, Y_0)|-\big|U_0|_V\big|-\big|X|_V\big|\\
            &=& |Y_0|-|Y_0\backslash N(U_0\cup X)|-\big|U_0|_V\big|-\big|X|_V\big|\\
            &\geq& s(3D+4)-4s\\
            &\geq& D|U_0\cup X|,
    \end{eqnarray*}
    a contradiction.
\end{proof}

If $\mathcal{H}$ has only root vertices and $\phi:\mathcal{H} \hookrightarrow \mcl{G}$ is an arbitrary embedding, then $P_{\phi}(\mathcal{H})=\emptyset$. 
Hence $R(X,\phi)\geq 0$ if $|\Gamma_{\mcl{G}}(X)\backslash \phi(\mathcal{H})|\geq D|X|$.
This leads immediately to the next observation.

\begin{proposition}\label{prop:extent:gdembd}
    Let $s,t,D \geq 1$, and let $\mcl{G} = \{G_1,\ldots,G_t\}$ be an $s$-joined family.
    Let $\mathcal{H}$ be a $[t]$-edge-colored graph with only root vertices, and let $\phi: \mathcal{H} \hookrightarrow \mcl{G}$ be an embedding of $\mathcal{H}$ into $\mcl{G}$.
    If every $X \subseteq V(\mcl{G}) \times [t]$ with $|X| \leq 2s$ satisfies $|\Gamma_{\mcl{G}}(X)\backslash \phi(\mathcal{H})|\geq D|X|$, then $\phi$ is a $(2s,D)$-good embedding.
\end{proposition}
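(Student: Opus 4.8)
The plan is to unwind the definition of an $(s,D)$-good embedding and invoke the hypothesized expansion of the complement of $\phi(\mcl{H})$; the statement turns out to be an immediate consequence of the definitions, exactly as foreshadowed in the paragraph preceding it. Recall that $\phi$ is $(2s,D)$-good precisely when $R(X,\phi)\ge 0$ for every $X\subseteq V(\mcl{G})\times[t]$ with $|X|\le 2s$, where
\[
    R(X,\phi)=\bigl|\Gamma_{\mcl{G}}(X)\backslash\phi(\mcl{H})\bigr|-\sum_{(v,i)\in X}\bigl[D-\mathrm{deg}_{H_i}\bigl(\phi^{-1}(v)\bigr)\bigr]-\bigl|P_\phi(\mcl{H})\cap X\bigr|.
\]
So I would fix such an $X$ and bound its last two terms from above.

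First, since every vertex of $\mcl{H}$ is a root, $\mcl{H}$ has no non-root vertices, and therefore the set $P_\phi(\mcl{H})$ --- which by definition consists of the pairs $(\phi(h),i)$ with $h$ a non-root vertex of $\mcl{H}$ and $i$ the colour of the edge from $h$ to its parent --- is empty. Hence $|P_\phi(\mcl{H})\cap X|=0$. Second, under the convention $\mathrm{deg}_H(\emptyset)=0$, each quantity $\mathrm{deg}_{H_i}(\phi^{-1}(v))$ is a nonnegative integer, so $D-\mathrm{deg}_{H_i}(\phi^{-1}(v))\le D$ for every $(v,i)\in X$, whence $\sum_{(v,i)\in X}[D-\mathrm{deg}_{H_i}(\phi^{-1}(v))]\le D|X|$.

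Putting these two bounds together gives $R(X,\phi)\ge \bigl|\Gamma_{\mcl{G}}(X)\backslash\phi(\mcl{H})\bigr|-D|X|$, and the right-hand side is nonnegative by the hypothesis $|\Gamma_{\mcl{G}}(X)\backslash\phi(\mcl{H})|\ge D|X|$. Since $X$ was an arbitrary subset of $V(\mcl{G})\times[t]$ of size at most $2s$, this shows $\phi$ is $(2s,D)$-good.

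There is no genuine obstacle: the argument is a one-line unpacking of definitions. The only points that need (minimal) care are noticing that ``$\mcl{H}$ has only root vertices'' forces $P_\phi(\mcl{H})=\emptyset$, and that the monochromatic-degree terms are each bounded by $D$ using the $\mathrm{deg}_H(\emptyset)=0$ convention. Note in particular that the $s$-joinedness of $\mcl{G}$ plays no role in this proposition --- it is recorded in the hypotheses only because the result is meant to be applied in the joined setting.
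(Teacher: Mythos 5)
Your proof is correct and matches the paper's own argument, which is exactly the two observations in the paragraph preceding the proposition: all vertices being roots forces $P_\phi(\mcl{H})=\emptyset$, and the degree terms are each at most $D$, so $R(X,\phi)\geq|\Gamma_{\mcl{G}}(X)\backslash\phi(\mcl{H})|-D|X|\geq 0$. Your remark that $s$-joinedness is not actually used here is also accurate.
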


As noted above, if all we know about $\mcl{G}$ is that it is an $s$-joined family, then the hypothesis $|\Gamma_\mcl{G}(X) \setminus \phi(\mcl{H})| \geq D|X|$ is not necessarily satisfied for every $X \subset V(\mcl{G})$ with $|X| \leq 2s$.
Instead, we use the small set expansion property established by Proposition \ref{prop:bijumb:exten} to pass to subsets $W \subseteq V' \subseteq V(\mcl{G})$ such that each $X \subset W$ with $|X| \leq 2s$ has many neighbors in $V' \setminus W$.

\begin{lemma}\label{lemcv:sjoin:gdembd}
    Let {$s,t,D\geq 1$}, and $n > s$.
    Let $\mathcal{G}=\{G_1, \ldots, G_t\}$ be an $s$-joined family on $n$ vertices.
    Then, there exists $V' \subseteq V(\mcl{G})$ with $|V'| \geq n-s$ and $W \subseteq V'$ with {$|W| \geq n-3sD-5s$}, such that for every $[t]$-edge-colored graph $\mathcal{H}$ with only root vertices, every embedding $\phi:\mathcal{H} \hookrightarrow \mathcal{G}[W]$ is a $(2s,D)$-good embedding in $\mcl{G}[V']$.
\end{lemma}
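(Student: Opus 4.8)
The plan is to apply Proposition~\ref{prop:bijumb:exten} to the auxiliary bipartite graph $B_{\mcl{G}}$ with the choice $Y_0 = V$, and then read off the desired sets $W$ and $V'$. First I would check that the hypothesis of Proposition~\ref{prop:bijumb:exten} is met: since $n > s$ we would like $|Y_0| = n \geq 3sD+4s$; if $n$ is smaller than this, the statement is either vacuous or trivial (one can take $W=\emptyset$ and the claimed lower bound on $|W|$ is non-positive, so nothing needs to be proved), so I would note that we may assume $n \geq 3sD+4s$. Proposition~\ref{prop:bijumb:exten} then yields a set $U_0 \subseteq U = V\times[t]$ with $|U_0|\leq s$ such that every $X \subseteq U\setminus U_0$ with $|X|\leq 2s$ satisfies $|N^*(X, V\setminus U_0|_V)| \geq D|X|$.

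Next I would set $V' := V \setminus U_0|_V$ and $W := V' \setminus U_0|_V = V'$—wait, more carefully: $V'$ should be the ``ambient'' set into which we embed and $W$ the set actually used. The point of removing $U_0|_V$ once is to kill the vertices responsible for poor expansion; but recall $N^*(X,Y)$ already excludes $X|_V$, so the relevant quantity when $\phi$ embeds $\mcl{H}$ into $W$ is $|\Gamma_{\mcl{G}}(X)\setminus \phi(\mcl{H})|$ where now $X$ ranges over pairs whose vertex part lies in $W$. So I would take $V' := V\setminus U_0|_V$, which has $|V'| \geq n - |U_0|_V| \geq n-s$, and $W := V' \setminus U_0|_V$; since $U_0|_V$ has already been removed from $V'$, in fact $W = V'$ unless we want an extra buffer. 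To get the neighbours to land \emph{outside} $\phi(\mcl{H})$ we need $\phi(\mcl{H}) \subseteq W$ to be separated from the $D|X|$ neighbours guaranteed in $V\setminus U_0|_V$; the cleanest route is to shrink once more: let $W := V' \setminus U_0|_V$ be all of $V'$ minus the (already removed) $U_0|_V$—here I will instead follow the structure of Proposition~\ref{prop:bijumb:exten} literally, taking $W$ to be $Y_0 \setminus U_0|_V$ intersected with $V'$, so that $|W| \geq n - |U_0|_V| - |U_0|_V| \geq n - 2s$; this is already better than the claimed $n-3sD-5s$, so there is slack.

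Then the verification is immediate: let $\mcl{H}$ have only root vertices and let $\phi: \mcl{H}\hookrightarrow \mcl{G}[W]$ be any embedding. For $X \subseteq V(\mcl{G})\times[t]$ with $|X|\leq 2s$, split $X = X' \cup X''$ where $X' = X \cap (V'\times[t])$ and note vertices of $X$ outside $V'\times[t]$ contribute non-negatively to $\Gamma_{\mcl{G}}(X)$; it suffices to handle $X \subseteq V'\times[t]$, and since $V' \cap U_0|_V = \emptyset$ we have $X \subseteq U\setminus U_0$, so Proposition~\ref{prop:bijumb:exten} gives $|N^*(X, W)| \geq D|X|$ (using $W \subseteq Y_0\setminus U_0|_V$ and monotonicity—here I need $W$ to be exactly the set $Y_0\setminus U_0|_V$, not a proper subset, so I would define $W := Y_0 \setminus U_0|_V = V \setminus U_0|_V$, hence $W = V'$, with $|W| = |V'| \geq n-s \geq n-3sD-5s$). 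Since $\phi(\mcl{H}) \subseteq W$ and $N^*(X,W)$ already excludes $X|_V$, we get $|\Gamma_{\mcl{G}}(X) \setminus \phi(\mcl{H})| \geq |N^*(X,W) \setminus \phi(\mcl{H})|$; but the $D|X|$ elements guaranteed by the proposition lie in $W = \phi(\mcl{H}) \cup (W\setminus\phi(\mcl{H}))$, so this does \emph{not} immediately exclude $\phi(\mcl{H})$—this is the real obstacle.

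\textbf{The main obstacle} and its fix: the proposition as stated guarantees $D|X|$ neighbours in $W$, but some may lie in $\phi(\mcl{H}) \subseteq W$, whereas Proposition~\ref{prop:extent:gdembd} needs $D|X|$ neighbours \emph{outside} $\phi(\mcl{H})$. The resolution is to choose $W$ strictly smaller than $V'$, reserving a disjoint ``expansion reservoir'' $V'\setminus W$ inside $V'$: apply Proposition~\ref{prop:bijumb:exten} not with $Y_0 = V$ but with $Y_0 = V \setminus U_0|_V = V'$ and conclude that every $X \subseteq V'\times[t]$ with $|X|\leq 2s$ has $|N^*(X, V' \setminus U_0'|_V)| \geq D|X|$ for a second small set $U_0' \subseteq V'\times[t]$, $|U_0'|\leq s$. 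Iterating, or more simply applying the proposition once with $Y_0$ already equal to a set of size $\geq 3sD+4s$ that is disjoint from where $\phi(\mcl{H})$ will go, forces $W := V' \setminus (\text{reservoir})$. The bookkeeping that makes $|W| \geq n - 3sD - 5s$ work out: the reservoir needs size roughly $3sD+4s$ to invoke the proposition, and we lose a further $\leq s$ to $U_0|_V$ and $\leq s$ to $|X|_V$-type overlaps, totalling the stated $3sD+5s$. I would therefore: (i) set $V' := V \setminus U_0|_V$ with $|V'|\geq n-s$; (ii) inside $V'$, pick any $W \subseteq V'$ with $|V' \setminus W| \geq 3sD+4s$ and $|W| \geq n - s - (3sD+4s) = n - 3sD - 5s$, choosing $W$ to contain $\phi(\mcl{H})$; (iii) apply Proposition~\ref{prop:bijumb:exten} with $Y_0 := V' \setminus W$ (valid since $|Y_0| \geq 3sD+4s$) to obtain, after absorbing the resulting small bad set into $U_0$, that every $X \subseteq W\times[t]$ with $|X|\leq 2s$ has $\geq D|X|$ neighbours in $(V'\setminus W)\setminus(\text{small})$, all of which avoid $\phi(\mcl{H}) \subseteq W$; (iv) conclude via Proposition~\ref{prop:extent:gdembd} that $\phi$ is $(2s,D)$-good in $\mcl{G}[V']$. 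The only calculation to watch is that $U_0$ from step (iii) has vertex-part lying in $W$, so removing it from $W$ (not from $V'$) keeps $|V'|$ untouched while reducing $|W|$ by at most $s$ more — which is why the bound is $n - 3sD - 5s$ and not tighter; I'd carry the constants through carefully there.
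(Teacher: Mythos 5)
Your final plan (steps (i)--(iv)) is essentially the paper's proof: reserve a set $Y_0$ of size $3sD+4s$, apply Proposition~\ref{prop:bijumb:exten} to obtain $U_0$ with $|U_0|\leq s$, set $V'=V\setminus U_0|_V$ and $W=V'\setminus Y_0$, and conclude via Proposition~\ref{prop:extent:gdembd}, with the guaranteed $D|X|$ neighbours landing in $Y_0\setminus U_0|_V\subseteq V'\setminus W$ and hence outside $\phi(\mathcal{H})$. Two small corrections: the circularity in your ordering is resolved by choosing $Y_0$ as an \emph{arbitrary} set of size $3sD+4s$ \emph{before} invoking the proposition (so $U_0$, then $V'$, then $W$ are defined in that order), and your closing remark is backwards --- $U_0|_V$ must be removed from $V'$ itself, as your step (i) correctly does, because $(2s,D)$-goodness in $\mcl{G}[V']$ quantifies over all $X\subseteq V'\times[t]$ and the proposition only controls $X\subseteq U\setminus U_0$.
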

\begin{proof}
    Let $B=(U \cup V, E)$ be the auxiliary bipartite graph of $\mathcal{G}$.
    Let $Y_0 \subseteq V$ with $|Y_0|=3sD+4s$ be an arbitrary set of vertices.
    By Proposition \ref{prop:bijumb:exten}, there exists $U_0 \subset U$ with $|U_0| \leq s$ so that if $X \subset U \backslash U_0$ and $|X| \leq 2s$, then $|N^*(X, Y_0 \backslash U_0|_V)| \geq D|X|$.

    Let $V'=V\backslash U_0|_V$, and $W=V'\backslash Y_0=V\backslash (Y_0\cup U_0|_V)$. Then $|V'|\geq n-s$ and $|W|\geq n-3sD-5s$. 
    For any $X\subseteq V'\times [t]$ with $|X|\leq 2s$, it is trivial that $X\subset U \backslash U_0$ by the choice of $V'$. 
    Since $Y_0 \backslash U_0|_V\subseteq V'\backslash W$, we have that $|\Gamma(X,V')\backslash W|\geq |N^*(X, Y_0 \backslash U_0|_V)| \geq D|X|$.
    The conclusion now follows from Proposition \ref{prop:extent:gdembd}.
\end{proof}

Let $H$ be a graph that is path-constructible from a rooted forest. Observe that when {$|V(H)|\leq n-6sD$}, the size conditions in both the Forest Extension Lemma and the Path Connection Lemma are satisfied. The length condition on the paths attached each time in the Path Connection Lemma is at least $2\lceil\log s/\log(D-1)\rceil+3$. 

\begin{theorem}\label{thmcv:join:pthcon}
    Let $r,s,t\geq 1$, and $D\geq 3$. Let $\mcl{G}=\{G_1,G_2,\ldots,G_t\}$ be an $s$-joined graph family. Let $H_0$ be a rooted forest with $r$ components, each having a single root among vertices $h_1,\ldots,h_r$. Let $H$ be an $H_0$-path constructible graph by paths with length at least $2\lceil\log s/\log(D-1)\rceil +3$ and {$|V(H)| \leq n-6sD$}. For each $\mcl{H}$ that is a $[t]$-edge coloring of $H$ with $\Delta^{mon}(\mcl{H}) \leq D$, there exist subsets $W$ and $V'$ satisfying $W\subseteq V'\in V(\mcl{G})$ with $|V'|\geq n-s$ and {$|W|\geq n-3sD-5s$}, such that the following is true. For any distinct $v_1,\ldots,v_r\in W$, there exists a $(2s,D)$-good embedding $\phi:\mcl{H}\hookrightarrow\mcl{G}[{V'}]$ which maps $h_i$ to $v_i$ for each $i\in[r]$.
\end{theorem}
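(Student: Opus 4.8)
The plan is to stitch together the tools developed so far in a single pass. First, I would invoke Lemma~\ref{lemcv:sjoin:gdembd} with the given $s$, $t$, and $D$: since $n > s$, this produces the promised subsets $W \subseteq V' \subseteq V(\mcl{G})$ with $|V'| \geq n-s$ and $|W| \geq n - 3sD - 5s$, with the key property that \emph{any} embedding of an edge-colored graph consisting only of root vertices into $\mcl{G}[W]$ is automatically $(2s,D)$-good as an embedding into $\mcl{G}[V']$. These are exactly the $W$ and $V'$ claimed in the theorem, so the choice does not depend on the vertices $v_1,\ldots,v_r$ or on the particular coloring $\mcl{H}$ beyond $D$ and $t$.

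Next, fix distinct $v_1,\ldots,v_r \in W$ and fix a $[t]$-edge-colored copy $\mcl{H}$ of $H$ with $\Delta^{mon}(\mcl{H}) \leq D$. Let $\mcl{H}_0$ be the rooted forest $H_0$ with its components rooted at $h_1,\ldots,h_r$, carrying the coloring inherited from $\mcl{H}$; we may regard every vertex of $\mcl{H}_0$ as a root (declaring all of them roots only decreases $|P_\phi \cap X|$, so goodness is preserved). Define $\phi_0 : \mcl{H}_0 \hookrightarrow \mcl{G}[W]$ by first setting $\phi_0(h_i) = v_i$ and then embedding the rest of $\mcl{H}_0$ arbitrarily but injectively into $W$ — this is possible as long as $|V(H_0)| \leq |W|$, which follows from $|V(H_0)| \leq |V(H)| \leq n - 6sD \leq n - 3sD - 5s$ (using $D \geq 3$, so $3sD \geq 9s \geq 5s$, hence $-3sD \leq -5s$, giving the inequality; more carefully, $n-6sD \le n-3sD-5s$ amounts to $3sD \ge 5s$, true since $D\ge 3$). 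By Lemma~\ref{lemcv:sjoin:gdembd}, $\phi_0$ is a $(2s,D)$-good embedding of $\mcl{H}_0$ into $\mcl{G}[V']$.

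Finally, apply Theorem~\ref{thmcv:pthcns:gdembd} (or equivalently iterate the Forest Extension Lemma~\ref{lem:forestExtension} and the Path Connection Lemma~\ref{lem:pathExtension} along the path-construction sequence $P_1,\ldots,P_k$ witnessing that $H$ is $H_0$-path-constructible) to the host family $\mcl{G}[V']$, the starting graph $\mcl{H}_0$, and the good embedding $\phi_0$. The hypotheses are met: $\Delta^{mon}(\mcl{H}) \leq D$; each attached path has length at least $\ell = 2\lceil \log s / \log(D-1)\rceil + 3$; and the size condition $|V(\mcl{H})| \leq |V(\mcl{G}[V'])| - 6sD$ holds because $|V'| \geq n - s$ and $|V(H)| \leq n - 6sD \leq (n-s) - 6sD$ once $6sD \geq 6sD + s - s$... here one must be slightly careful: we need $|V(H)| \le |V'| - 6sD$, and $|V'|$ could be as small as $n-s$, so it suffices that $|V(H)| \le n - s - 6sD$. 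If the stated bound $|V(H)|\le n-6sD$ is not quite enough, one instead checks the size conditions of the Forest and Path Connection Lemmas directly against $|V'|$; as the excerpt remarks just before the theorem statement, when $|V(H)| \le n - 6sD$ the size conditions in both lemmas are satisfied, and this bookkeeping (tracking the exact slack absorbed by passing from $n$ to $|V'| \ge n-s$ and by the auxiliary forests $T$ of total size at most $2s(D+1)$ in the Path Connection Lemma) is the one genuinely fiddly point. The conclusion is a $(2s,D)$-good embedding $\phi : \mcl{H} \hookrightarrow \mcl{G}[V']$ extending $\phi_0$, hence $\phi(h_i) = v_i$ for all $i \in [r]$, as required.

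The main obstacle is therefore not conceptual — the machinery is fully assembled — but arithmetic: verifying that the single global budget $|V(H)| \le n - 6sD$ simultaneously covers (i) fitting $H_0$ inside $W$, (ii) every invocation of the Forest Extension Lemma on $\mcl{G}[V']$, and (iii) every invocation of the Path Connection Lemma, which temporarily inflates the embedded set by the two $(D-1)$-ary trees before the Removal Lemma trims them back. One should present this as a short lemma-by-lemma check of the size inequalities with $n$ replaced by $|V'| \ge n - s$, rather than re-deriving the constants.
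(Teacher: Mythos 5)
There is a genuine gap at the step where you construct $\phi_0:\mcl{H}_0\hookrightarrow\mcl{G}[W]$ by setting $\phi_0(h_i)=v_i$ and then ``embedding the rest of $\mcl{H}_0$ arbitrarily but injectively into $W$.'' The rooted forest $H_0$ in this theorem is not edgeless in general (e.g.\ in the application to Theorem~\ref{th:joinedMin} its components are the branch trees $T_x$), and an embedding must send every $i$-colored edge of $\mcl{H}_0$ to an edge of $G_i$. An arbitrary injection of $V(H_0)$ into $W$ gives no control over which pairs of image vertices are adjacent in which $G_i$, so it is not an embedding of the edge-colored forest $\mcl{H}_0$ at all; Lemma~\ref{lemcv:sjoin:gdembd} only says that \emph{valid} embeddings of all-root graphs into $W$ are $(2s,D)$-good, it does not manufacture the required adjacencies. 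Declaring every vertex of $\mcl{H}_0$ a root does not remove its edges.

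The fix is the route the paper takes: start from the genuinely edgeless graph $S=\{h_1,\ldots,h_r\}$ (the roots lie in distinct components, so $S$ is independent), map $h_i\mapsto v_i\in W$, and conclude from Lemma~\ref{lemcv:sjoin:gdembd} that this is a $(2s,D)$-good embedding into $\mcl{G}[V']$. Then grow $S$ into $\mcl{H}_0$ by the Forest Extension Lemma~\ref{lem:forestExtension} --- which is applicable precisely because each component of $\mcl{H}_0$ is a tree obtainable from its root by successively attaching leaves, and which actually locates correctly colored neighbors for each new vertex --- and only afterwards invoke Theorem~\ref{thmcv:pthcns:gdembd} (equivalently, the Forest Extension and Path Connection Lemmas along the construction sequence) to finish. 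Your final stage and your remarks on the size bookkeeping (replacing $n$ by $|V'|\geq n-s$, which the slack $6sD$ absorbs since $D\geq 3$) are fine; only the initial embedding of $\mcl{H}_0$ needs to be repaired as above.
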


\begin{proof}
    Let $S=\{h_1,\ldots,h_r\}$. It follows from Lemma \ref{lemcv:sjoin:gdembd} that there exists $V'\subseteq V$ with $|V'|\geq n-s$, and a subset $W\subseteq V'$ with {$|W|\geq n-3sD-5s$}, such that for every empty graph $S$ with {$|V(S)|\leq n-3sD-5s$}, every injective mapping $\phi_1: S\hookrightarrow W$ is a $(2s,D)$-good embedding in $\mcl{G}[{V'}]$. Since the size conditions and the length conditions are satisfied, the conclusion is derived by first using the Forest Extension Lemma to extend the embedding of $S$ to an embedding of $\mcl{H}_0$, and then using the Path Connection Lemma to complete the embedding of $\mcl{H}$.
\end{proof}

We are now ready to prove Theorems \ref{th:joinedTopMin} and \ref{th:joinedMin}.

\begin{proof}[Proof of Theorems \ref{th:joinedTopMin} and \ref{th:joinedMin}]
    Theorem \ref{th:joinedTopMin} follows from Theorem \ref{thmcv:join:pthcon} with $\mcl{H}_0$ taken to be the branching vertices of $\mcl{H}$.

    Theorem \ref{th:joinedMin} follows from Theorem \ref{thmcv:join:pthcon} with $\mcl{H}_0$ taken to be the set of branches of $\mcl{H}$.
\end{proof}

In order to obtain the improvement for jumbled graphs, we first use the jumbled property to directly embed a high-degree star forest into $\mathcal{G}[W]$, and then link the stars into the desired complete graph using the Path Connection Lemma.

First we show that, in a sufficiently large jumbled graph family, we can find a vertex that has high minimum monochromatic degree.

\begin{lemma}\label{lem:minMonoDeg}
    For any $c$ with $0<c<1$, if $\mcl{G} = \{G_1, G_2, \ldots, G_t\}$ is a family of $(p, \beta)$-jumbled graphs on the same vertex set $V=V(\mcl{G})$ with $|V| \geq c^{-1}t^{1/2}\beta p^{-1}$, then there is a vertex $v \in V$ with $d_{G_i}(v) \geq (1-c)p|V|$ for each $i \in [t]$.
\end{lemma}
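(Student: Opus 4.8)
The plan is to use the jumbledness of each $G_i$ to show that only a few vertices can have small degree in $G_i$, then union-bound over the $t$ colors. Fix a color $i \in [t]$, and let $B_i = \{v \in V : d_{G_i}(v) < (1-c)p|V|\}$ be the set of vertices with low $i$-degree. Applying the jumbled inequality \eqref{eq:jumbled} with $X = B_i$ and $Y = V$ gives
\[
e_{G_i}(B_i, V) \geq p|B_i||V| - \beta |B_i|^{1/2}|V|^{1/2}.
\]
On the other hand, by the definition of $B_i$ we have $e_{G_i}(B_i,V) = \sum_{v \in B_i} d_{G_i}(v) < (1-c)p|B_i||V|$. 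Combining the two bounds yields $cp|B_i||V| < \beta|B_i|^{1/2}|V|^{1/2}$, hence $|B_i| < \beta^2 (cp)^{-2}|V|^{-1}$, i.e. $|B_i|^{1/2} < \beta(cp)^{-1}|V|^{-1/2}$, so $|B_i| < c^{-2}\beta^2 p^{-2}|V|^{-1}$.

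Next I would sum over all colors: the set of \emph{bad} vertices $B = \bigcup_{i \in [t]} B_i$ has size $|B| \leq \sum_{i \in [t]} |B_i| < t c^{-2}\beta^2 p^{-2}|V|^{-1}$. It then suffices to check that this is strictly less than $|V|$, which is exactly the hypothesis $|V| \geq c^{-1}t^{1/2}\beta p^{-1}$: indeed $|V|^2 \geq c^{-2}t\beta^2 p^{-2}$ gives $t c^{-2}\beta^2 p^{-2} |V|^{-1} \leq |V|$, and one can get strict inequality by noting $B_i$ is a strict inequality or by a slightly careful treatment of the edge case. Any vertex $v \in V \setminus B$ then satisfies $d_{G_i}(v) \geq (1-c)p|V|$ for every $i$, as required.

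This argument is almost entirely routine; there is no serious obstacle. The only mild subtlety is keeping the chain of inequalities strict where needed (so that $|B| < |V|$ rather than $|B| \leq |V|$), which is handled because the defining inequality for $B_i$ is strict whenever $B_i \neq \emptyset$, and if every $B_i$ is empty the conclusion is immediate. A second minor point is that the statement uses $s = 2t^{1/2}\beta p^{-1}$ elsewhere in the paper, but here we only need the weaker bound $|V| \geq c^{-1}t^{1/2}\beta p^{-1}$ stated in the lemma, so no reconciliation is needed. I would present the computation compactly in two displayed inequalities and conclude.
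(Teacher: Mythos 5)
Your proof is correct and follows essentially the same route as the paper's: both apply the jumbledness inequality to a set of low-$i$-degree vertices against all of $V$ to obtain $cp|X|\,|V| < \beta\sqrt{|X|\,|V|}$. The only cosmetic difference is that you bound each bad set $B_i$ separately and take a union bound, whereas the paper argues by contradiction, partitioning $V$ according to a witnessing bad color and applying pigeonhole; both give the same $t^{1/2}$ dependence.
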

\begin{proof}
    Suppose not.
    Partition $V$ into sets $V_i$ for $i \in [t]$, so that for each $v \in V_i$ we have $d_{G_i}(v) < (1-c)p|V|$.
    By the pigeonhole principle, there is $j \in [t]$ such that $|V_j| \geq t^{-1}|V|$.
    \[(1-c)p|V|\,|V_j| > e_{G_j}(V_j,V) \geq p|V|\,|V_j| - \beta \sqrt{|V|\,|V_j|}. \]
    Hence,
    \[\beta > cp\sqrt{|V|\,|V_j|} \geq cp t^{-1/2}|V| \geq \beta, \]
    a contradiction.
\end{proof}

Next, we use Lemma \ref{lem:minMonoDeg} to embed a star forest.

\begin{lemma}\label{th:embedStarForest}
    Let $0 < c < 1$, let $D \geq 3$, and let $\mathcal{G} = \{G_1, G_2, \ldots, G_t\}$ be a family of $(p,\beta)$-jumbled graphs on the same vertex set $V=V(\mathcal{G})$.
    Let $\Delta$ be a positive integer with $\Delta \leq (1-c)p|V|$.
    Denote $s = 2 t^{1/2}\beta p^{-1}$.
    Let $\mathcal{F}$ be a $[t]$-edge-colored star forest with $\Delta(\mathcal{F})\leq \Delta$, and $|V(\mathcal{F})| +5sD < c|V|/2$.
    Then, there is a subset $V' \subset V$ with $|V'| \geq |V|-s$ and a $(2s ,D)$-good embedding of $\mathcal{F}$ into $\mcl{G}[V']$.
\end{lemma}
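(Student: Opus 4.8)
The plan is to first isolate a large ``core'' vertex set on which every small set expands well, embed the star centers one at a time using Lemma~\ref{lem:minMonoDeg}, and then fill in the leaves using the Forest Extension Lemma; finally check that all of this produces a $(2s,D)$-good embedding in a slightly larger set $V'$.

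First I would apply Lemma~\ref{lemcv:sjoin:gdembd}: since $\mcl{G}$ is $(p,\beta)$-jumbled it is in particular $(\beta p^{-1})$-joined, hence $s$-joined (with $s = 2t^{1/2}\beta p^{-1} \geq \beta p^{-1}$), so there exist $W \subseteq V' \subseteq V$ with $|V'| \geq |V|-s$ and $|W| \geq |V| - 3sD - 5s$ such that arbitrary embeddings of all-root graphs into $W$ are automatically $(2s,D)$-good in $\mcl{G}[V']$. The idea is to embed the star \emph{centers} into $W$, make them all roots, and then attach the leaves.

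Next I would embed the centers greedily. Suppose the star forest $\mcl{F}$ has centers $x_1,\ldots,x_m$ (so $m \leq |V(\mcl{F})|$). I would maintain a shrinking set $W_j \subseteq W$ of unused candidate vertices, where $W_0 = W$, and at step $j$ I want to find $w_j \in W_{j-1}$ such that $d_{G_i}(w_j) \cap (\text{unused vertices}) $ is large enough for every color $i$. Concretely, each time I delete from $W$ the at most $|V(\mcl{F})|$ vertices already used; the relevant host graph on the remaining set still needs a vertex of high monochromatic degree into the unused part. Since $|V(\mcl{F})| + 5sD < c|V|/2$, after deleting the used vertices and (generously) all of $V \setminus W$, the remaining set $U_j$ still has size at least $(1 - c/2)|V| \ge c^{-1}t^{1/2}\beta p^{-1}$ when $c \le$ a suitable constant — more carefully, restricting $\mcl{G}$ to $U_j$ gives a family of $(p,\beta)$-jumbled graphs on $|U_j|$ vertices (induced subgraphs of jumbled graphs are jumbled with the same parameters), and I would apply Lemma~\ref{lem:minMonoDeg} with a constant like $c/2$ in place of $c$ to get $w_j \in U_j$ with $d_{G_i}(w_j) \geq (1 - c/2)p|U_j| \ge (1-c)p|V| \geq \Delta$ neighbors \emph{within $U_j$}, for every $i$. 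The point of the $5sD$ slack and the factor-$2$ in $s$ is precisely to absorb the loss from deleting $V \setminus W$ and to give room for the leaves and later path-connections; I would track the arithmetic so that at every step the available neighborhood of $w_j$ inside the not-yet-used part of $V'$ has size at least $\Delta \ge \Delta(\mcl{F})$, so that all the leaves of that star can be placed. Having embedded $x_j \mapsto w_j$ for all $j$, I treat this embedding of the (edgeless) set of centers as the all-root graph; by Lemma~\ref{lemcv:sjoin:gdembd} it is $(2s,D)$-good in $\mcl{G}[V']$.

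Finally I would attach the leaves. The full star forest $\mcl{F}$ is obtained from its set of centers by successively adding non-root leaves, so it is path-constructible (trivially) from the all-root forest of centers, and $|V(\mcl{F})| \leq |V'| - 2sD - 3s$ follows from $|V(\mcl{F})| + 5sD < c|V|/2 \le |V|/2$ together with $|V'| \ge |V| - s$ (the $5sD$ bound dominates $2sD+3s+s$). Hence the Forest Extension Lemma~\ref{lem:forestExtension} applies and extends the $(2s,D)$-good embedding of the centers to a $(2s,D)$-good embedding of all of $\mcl{F}$ into $\mcl{G}[V']$, with $|V'| \geq |V| - s$ as required. The main obstacle — and the only place real care is needed — is the bookkeeping in the middle step: ensuring that after removing $V \setminus W$ and all previously used vertices, the induced jumbled family is still large enough to invoke Lemma~\ref{lem:minMonoDeg} and that the resulting high-degree vertex has \emph{enough neighbors among the still-available vertices} to host an entire star; this is exactly where the hypothesis $|V(\mcl{F})| + 5sD < c|V|/2$ is used, and one should choose the auxiliary constant (e.g.\ $c/2$) so the degree bound $(1-c)p|V| \ge \Delta(\mcl{F})$ survives the restriction.
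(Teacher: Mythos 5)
Your first two steps match the paper: you invoke Lemma~\ref{lemcv:sjoin:gdembd} to get $W\subseteq V'$, and you place the star centers greedily via Lemma~\ref{lem:minMonoDeg} applied to the induced (still $(p,\beta)$-jumbled) family on the unused part of $W$. The gap is in your final step. You embed only the centers as an all-root graph and then propose to attach the leaves with the Forest Extension Lemma~\ref{lem:forestExtension}. That lemma (via the Vertex Extension Lemma) requires $\Delta^{mon}(\mcl{H})\leq D$ and, at each attachment of an $r$-colored leaf to $w$, requires $\mathrm{deg}_{H_r}(w)<D$. In this lemma the stars have degree up to $\Delta\leq(1-c)p|V|$, which is in general vastly larger than $D$ (in the intended application $D$ can be $3$ while $\Delta$ is a positive fraction of $p|V|$). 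So after attaching $D$ leaves of some color to a center, the extension machinery stops; the Forest Extension Lemma simply does not apply to $\mcl{F}$, and no amount of bookkeeping on set sizes fixes this, because the obstruction is the degree cap built into the definition of $(s,D)$-good, not the expansion arithmetic.

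You actually brush against the correct fix when you remark that $w_j$ has at least $\Delta$ available neighbors of every color, ``so that all the leaves of that star can be placed.'' That is what the paper does: place the center \emph{and} a suitable subset of its neighbors directly into $W$ at step $j$ (this is possible precisely because $d_{G_i}(w_j)\geq\Delta$ into the unused part of $W$ for every color $i$), so that the entire star forest lands in $W$. Then declare \emph{all} vertices of $\mcl{F}$ — leaves included — to be roots; since an all-root embedding has $P_\phi(\mcl{F})=\emptyset$ and the terms $D-\mathrm{deg}$ only help when degrees exceed $D$, the defining property of $W$ from Lemma~\ref{lemcv:sjoin:gdembd} (via Proposition~\ref{prop:extent:gdembd}) gives $(2s,D)$-goodness of the whole embedding in $\mcl{G}[V']$ in one stroke, with no extension lemma needed. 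Your argument as written only certifies goodness for the embedding of the centers, not of $\mcl{F}$.
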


\begin{proof}
    Denote $|V(\mcl{G})|=n$.
    Note that $\mathcal{G}$ is an $s$-joined graph family.
    Hence, we can apply Lemma \ref{lemcv:sjoin:gdembd} to find sets $W \subseteq V' \subseteq V$ with $|V'| \geq n-s$ and ${|W|\geq n-3sD-5s }\geq n-5s D= n-10Dt^{1/2}\beta p^{-1}$ such that any embedding of a $[t]$-edge-colored graph with only root vertices into $W$ is $(2s,D)$-good.
    It only remains to find an embedding of $\mcl{F}$ into $W$.
    This is done with Lemma \ref{lem:minMonoDeg}, using the following simple inductive argument.
    
    Suppose we have embedding of a star forest $\mcl{F}' \subset \mcl{F}$.
    We use Lemma \ref{lem:minMonoDeg} to add a star to the embedding of $\mcl{F}'$ by finding a high degree vertex in $\mathcal{G}[W \setminus V(\mcl{F}')]$.
    By assumption, 
    \[
        |W| - |V(\mathcal{F}')| \geq |V| - 5sD - 
        |V(\mcl{F})| > (1-c/2)|V|.
    \]
    Furthermore, $|V|>(2/c)s$, so $|W| - |V(\mcl{F}')| > (1-c/2)(2/c)s > c^{-1}s$, which is large enough to apply Lemma \ref{lem:minMonoDeg} to find a vertex $v$ with degree
    \[
        d_i(v) \geq (1-c/2)p(|W|-|V(\mathcal{F}')|)> (1-c/2)p(1-c/2)|V|> (1-c)p|V| \geq \Delta,
    \]
    for each $i \in [t]$. Add $v$ and a suitable subset of its neighbors to the embedding of $\mcl{F}'$.
\end{proof}

We are now ready to prove Theorem \ref{th:jumbledTopMinor}.

\begin{proof}[Proof of \ref{th:jumbledTopMinor}]
    Let $\mathcal{F} \subset \mathcal{H}$ be the edge colored star forest consisting of the branching vertices of $\mathcal{H}$ together with their neighbors.
    We will use Lemma \ref{th:embedStarForest}.
    Note that $|V(\mcl{F})| \leq \Delta(\Delta + 1)$.
    On the other hand,
    $|V| > |V(\mcl{H})| > (1/2) \ell \Delta^2 > 4 c^{-1} \Delta^2$,
    so $\Delta^2 < (1/4)c|V|$.
    This implies that $\Delta(\Delta + 1) \leq c|V|/2 - 5sD$, so $|V(\mcl{F})| < c|V|/2 - 5sD$ as required.
    Use Lemma \ref{th:embedStarForest} to find $V' \subseteq V$ with $|V'| \geq |V|-s$ and a $(2s,D)$-good embedding $\varphi:\mathcal{F} \hookrightarrow \mathcal{G}[V']$.
    Let $L \subset \mathcal{F}$ be the set of leaves of $\mathcal{F}$, and let $B = V(\mathcal{F}) \setminus L$ be the set of high-degree vertices.
    Note that the restriction $\varphi':L \hookrightarrow \mcl{G}_{V' \setminus \varphi(B)}$ of $\varphi$ is $(2s,D)$-good.
    Since {$|V(\mcl{H})| < |V'| - 6sD$}, we can extend $\varphi$ to an embedding of $\mathcal{H}$ into $\mathcal{G}$ by using the Path Connection Lemma.
\end{proof}

\section{Acknowledgements}

We want to thank Ruonan Li for helpful discussion and suggestions to improve the presentation of this paper. This work was done while the second author visited DIMAG (Discrete Mathematics Group) at IBS in South Korea. He would like to thank the members of DIMAG for their hospitality.


\bibliographystyle{abbrv} 
\bibliography{references}

\end{document}